\newcolumntype{R}[2]{%
    >{\adjustbox{angle=#1,lap=\width-(#2)}\bgroup}%
    l%
    <{\egroup}%
}
\definecolor{darkblue}{rgb}{0,0,0.6}
\newcommand{\mylabel}[2]{#2\def\@currentlabel{#2}\label{#1}}
\newtheorem*{rep@theorem}{\rep@title}
\newcommand{\newreptheorem}[2]{%
\newenvironment{rep#1}[1]{%
 \def\rep@title{#2 \ref{##1}}%
 \begin{rep@theorem}}%
 {\end{rep@theorem}}}
\newtheorem{proposition}{Proposition}[section]
\newtheorem{theorem}[proposition]{Theorem}
\newtheorem{corollary}[proposition]{Corollary}
\newtheorem{lemma}[proposition]{Lemma}
\theoremstyle{definition}
\newtheorem{question}[proposition]{Question}
\newtheorem{example}[proposition]{Example}
\theoremstyle{remark}
\newtheorem{remark}[proposition]{Remark}
\newtheorem*{remark*}{Remark}
\numberwithin{equation}{section}
\newcommand{\N}{\mathbb{N}}
\newcommand{\R}{\mathbb{R}}
\newcommand{\Z}{\mathbb{Z}}
\DeclareMathOperator{\vol}{vol}
\DeclareMathOperator{\volt}{volt}
\DeclareMathOperator{\volh}{vol_{\mathbb{H}}}
\LetLtxMacro\Oldfootnote\footnote
\begin{document}
\title{On a volume invariant of 3--manifolds}
\author{Marc Kegel, Arunima Ray, Jonathan Spreer, Em Thompson and Stephan Tillmann}

\begin{abstract}
This paper investigates a real-valued topological invariant of $3$--manifolds called \emph{topological volume}. For a given $3$--manifold $M$, the topological volume $\volt(M)$ is defined as the smallest volume of the complement of a (possibly empty) hyperbolic link in $M$. Various refinements of this invariant are given, asymptotically tight upper and lower bounds are determined, and all non-hyperbolic closed $3$--manifolds with topological volume of at most $3.07$ are classified. Moreover, it is shown that for all but finitely many cases, the volume minimiser in a lens space is obtained by Dehn filling one of the cusps of the complement of the Whitehead link or its sister manifold.
\end{abstract}

\keywords{hyperbolic volume, hyperbolic knots and links}

\primaryclass{
57K10, 
57K31, 
57K32, 
57R65. 
}

\makeshorttitle


\section{Introduction}

Let $M$ be a closed 3--manifold.
A topological invariant $\volt(M)$\footnote{The reader may interpret this as a quantification of how much potential the manifold has. We read this as \emph{topological volume} or \emph{volume topologique}. The name was coined with permission of Michel Boileau and Joan Porti.} is defined in~\cite{BBBMP} in an account of a complete proof of Thurston's Geometrisation Conjecture. Roughly speaking, this is the minimal volume of the complement of a (possibly empty) hyperbolic link in $M.$ Here we begin the study of this invariant in its own right by establishing some basic properties and computing it for certain families of non-hyperbolic 3--manifolds.

This introduces an interesting new organising principle for closed $3$--manifolds: Hyperbolic volume is one of the most widely used topological invariants for hyperbolic $3$--manifolds. By passing to hyperbolic links in (non-hyperbolic) manifolds we obtain a unified notion of volume for all $3$--manifolds. We show in \Cref{sec:Properties} that this volume invariant has interesting properties when studied over the whole class of closed 3--manifolds. Furthermore, we can partition the set of $3$--manifolds by asking for the number of components of the smallest volume hyperbolic link in $M$. Naturally, this number is zero if and only if $M$ is hyperbolic. It is an open question whether there exist manifolds for which this number is greater than one (cf. \Cref{que:unbounded_h(M)}).

As we demonstrate in this paper, topological volume is not just a theoretical concept. Instead, surprisingly powerful results about $\volt$ can be established by applying existing work on hyperbolic volume and exceptional surgeries. A key ingredient of our proofs is how hyperbolic volume behaves under Dehn filling according to work of J\o{}rgensen and Thurston \cite{thurston-notes,Gro81, Mai10}. This theory provides not just valuable tools, but also an insight into how hyperbolic volume is connected to the construction of general $3$--manifolds using Dehn surgery.

\paragraph{Related work}
A different but related \emph{link volume} was introduced by Rieck and Yamashita in~\cite{Rieck-link-2013}. A key difference between their invariant and $\volt$ is that there are at most finitely many 3--manifolds of the same topological volume (\Cref{pro:finiteness}) whilst there are examples of infinitely many 3--manifolds of the same link volume~\cite[Corollary 1.2]{Remigio-link-2012}. In this respect, our definition agrees with hyperbolic volume, since a given real number is realised as the volume of only finitely many hyperbolic 3--manifolds by J\o{}rgensen--Thurston theory \cite{thurston-notes,Gro81, Mai10}.

Another invariant that measures complexity of a closed orientable $3$-manifold $M$ is the Matveev complexity $c(M)$, which is defined as the
minimal number of vertices of a simple spine of $M$ \cite{Matveev}. It was shown that $c$ is additive under connected sum and agrees on irreducible manifolds that are not $S^3$, $\R P^3$, or $L(3,1)$ with the tetrahedral complexity $t(M)$. Our results show that $\volt$ behaves rather differently from $c$. For example we show in \Cref{sec:connected_sum} that $\volt$ is not additive under connected sum and in \Cref{ssec:upperlower} we observe that $\volt$ also behaves differently from the tetrahedral complexity $t$. 

On the other hand, we see a lot of overlap in our census of non-hyperbolic manifolds with topological volume at most $3.07$ with the census of non-hyperbolic manifolds of small Matveev complexity \cite{Matveev,complexity_census0,complexity_census}. For example all $19$ lens spaces with topological volume at most $3.07$ are a subset of the exactly $44$ lens spaces with Matveev complexity at most $5$, see Section 7 in \cite{complexity_census}.

\paragraph{Our contributions}

We begin, in \Cref{ssec:refinements}, by presenting some natural refinements of $\volt$. In particular, we consider $\volt_n(M)$ denoting the minimum over the volumes of all hyperbolic $n$-component links in $M$ and we define refinements of $\volt$ considering links representing a specified homology class. In the rest of \Cref{sec:Properties}, we establish some basic properties of $\volt$. In \Cref{sec:covers} we investigate how topological volume behaves under taking (branched) covers, and in \Cref{ssec:unbounded} identify sequences of closed 3--manifolds with unbounded topological volume, based on an observation on how topological volume interacts with classical topological invariants.

In \Cref{sec:bounds}, we strengthen these results by presenting upper and lower bounds for $\volt(M)$ for arbitrary closed $3$--manifolds $M$. In particular, the following two theorems can be proven as an application of results due to Adams \cite{Adams_Augmented}, Culler and Shalen \cite{Culler_Shalen2011,Culler_Shalen2012}, and Guzman and Shalen \cite{Guzman_Shalen,Guzman_Shalen2020}. 

\begin{theorem}\label{thm:surgery_bound}
  Let $M$ be a closed, orientable 3--manifold given by surgery along a nontrivial $n$-component link $L\subset S^3$ with crossing number $c=c(L)$. Then 
  \begin{equation*}
  	\volt(M)\leq \big(9c+15n-20\big)v_{oct}+4v_{tet}
  \end{equation*}
  where $v_{oct}\approx3.6638$ and $v_{tet}\approx1.01494$ are the volumes of a regular hyperbolic ideal octahedron and a regular hyperbolic ideal tetrahedron respectively.
\end{theorem}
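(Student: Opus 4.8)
The plan is to produce an explicit hyperbolic link $J\subset M$ with $\vol(M\setminus J)$ at most the claimed bound; since by definition $\volt(M)$ is the infimum of $\vol(M\setminus J)$ over (possibly empty) hyperbolic links $J\subset M$, exhibiting one such nonempty $J$ immediately yields the inequality. The conceptual heart is \emph{surgery duality}: writing $L^{*}\subset M$ for the union of the cores of the surgery solid tori, the surgery alters only the Dehn filling and not the exterior, so that $M\setminus\nu(L^{*})\cong S^3\setminus\nu(L)$.

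First I would fix a connected diagram $D$ of $L$ realising the crossing number $c$, and after reducing it to a prime, twist-reduced, non-split diagram (which is where the nontriviality of $L$ enters) \emph{augment} it: encircle the two strands at each crossing by a crossing circle, producing a link $\hat L=L\cup C\subset S^3$. By the theory of augmented links (Adams, Futer--Purcell) the exterior $S^3\setminus\nu(\hat L)$ is hyperbolic. Because the crossing circles $C$ are disjoint from $L$, they are untouched by the surgery, whence
\[
  M\setminus\nu(L^{*}\cup C)\;\cong\;S^3\setminus\nu(L\cup C)\;=\;S^3\setminus\nu(\hat L).
\]
By Mostow--Prasad rigidity the two sides are isometric, so $L^{*}\cup C$ is a hyperbolic link in $M$ whose complement has volume $\vol(S^3\setminus\nu(\hat L))$, giving $\volt(M)\le\vol(S^3\setminus\nu(\hat L))$. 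This reduces the theorem to an upper bound on the volume of a fully augmented link complement in terms of $c$ and $n$.

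To bound $\vol(S^3\setminus\nu(\hat L))$ I would flatten the crossings inside the crossing circles — a homeomorphism of the complement supported in balls — and invoke the standard decomposition of a flat augmented link complement into ideal polyhedra, obtained by cutting along the projection plane and along the twice-punctured disks bounded by the crossing circles. Straightening these ideal polyhedra is volume non-increasing, and subdividing them into ideal octahedra and tetrahedra bounds the total volume above by $N_{oct}\,v_{oct}+N_{tet}\,v_{tet}$, where $N_{oct}$ and $N_{tet}$ count the pieces. The combinatorics of this decomposition — governed by the number of crossings, the number of complementary regions of the diagram (controlled via Euler's formula for the underlying $4$-valent planar graph), and the number of components $n$ — is what is meant to produce the bounds $N_{oct}\le 9c+15n-20$ and $N_{tet}\le 4$.

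I expect the main obstacle to lie in this last count. The reduction itself is short and robust, but pinning down the exact coefficients $9$, $15$ and $-20$, together with the precise four correction tetrahedra, requires careful bookkeeping of how each crossing and each link component contributes cells to the polyhedral decomposition, and of the degeneracies excluded by the primeness and twist-reducedness hypotheses. A secondary technical point is guaranteeing hyperbolicity of $\hat L$ \emph{uniformly} across all nontrivial $L$, so that the displayed complement really is hyperbolic and the definition of $\volt$ applies; treating the finitely many small or exceptional diagrams separately is likely where the additive corrections to a clean $9c\,v_{oct}$ estimate originate.
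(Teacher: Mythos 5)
Your overall strategy --- augment a minimal diagram of $L$, use surgery duality to transplant the augmented complement into $M$, and then bound its volume --- matches the paper's at a high level, but two steps of your argument have genuine gaps. First, you cannot ``reduce'' a diagram of a fixed link $L$ to one that is prime, twist-reduced and non-split: if $L$ is a split link or a connected sum (both allowed by the hypotheses --- nontriviality does not exclude them), \emph{no} diagram of $L$ has these properties, and the hyperbolicity theorems you invoke (Adams for augmented alternating links, Futer--Purcell for fully augmented links) require them. Moreover, those theorems want one crossing circle per \emph{twist region} of a twist-reduced diagram, not one per crossing; encircling every crossing of a long twist region produces crossing circles enclosing the same pair of strands, which creates essential annuli and destroys hyperbolicity. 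The paper fixes both problems by modifying the link itself: it augments twist regions, replaces the twists by alternating tangles (so the underlying link becomes alternating but is no longer $L$ --- the complement of $L$ is then recovered as a Dehn \emph{filling} of the augmented complement, not by deleting a sublink), and adds extra unknotted components to kill splitting spheres and reducing spheres. Those extra components are exactly where the $n$-dependent contributions to the bound come from, so they cannot be waved away by an appeal to nontriviality.

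Second, the final volume estimate is not actually established: the counts $N_{oct}\le 9c+15n-20$ and $N_{tet}\le 4$ are reverse-engineered from the statement rather than derived, and a direct polyhedral decomposition of a fully augmented link complement naturally yields a bound in terms of the number of crossing circles, not an expression of the displayed form. The paper instead tracks the crossing number through its construction, obtaining an augmented alternating link $\bar L$ with at most $9c+15n-15$ crossings, and then applies Adams' general estimate $\volh(J)\le \big(c(J)-5\big)v_{oct}+4v_{tet}$ for hyperbolic links $J\subset S^3$ with more than four crossings; in particular the $4v_{tet}$ term is part of that cited theorem, not a correction coming from finitely many exceptional small diagrams. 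To repair your proof you would need to perform the paper's diagram modifications (or an equivalent) before augmenting, and to replace the asserted cell count with a citable volume bound such as Adams'.
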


\begin{theorem}\label{thm:lower_bounds}
    Let $M$ be a closed, orientable 3--manifold and $p$ be a prime. Then
    \begin{equation*}
        \frac{\operatorname{rk}(H_1(M;\Z_p))}{168.602}< \volt(M).
    \end{equation*}
    Moreover, we have $\volt(M)>3.08,$ $3.69$, $3.77$ for $\operatorname{rk}(H_1(M;\Z_2))>5$, $7$, $10$, respectively.
\end{theorem}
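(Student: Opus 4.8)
The plan is to reduce everything to two facts about finite-volume hyperbolic $3$--manifolds, with the bridge between $M$ and hyperbolic geometry provided by the Dehn filling that defines $\volt$. First I would fix a (possibly empty) hyperbolic link $L\subset M$ whose complement $N:=M\sm\nu(L)$ realises $\volt(M)$; such a minimiser exists by \Cref{pro:finiteness}. Then $M$ is recovered from $N$ by Dehn filling the cusps along the meridians $\gamma_1,\dots,\gamma_c$ of $L$, so $M=N\cup(V_1\cup\dots\cup V_c)$ with each $V_i$ a solid torus. Since a $1$--cycle in $M$ may be pushed off the codimension--$2$ cores of the $V_i$ into $N$, the inclusion induces a surjection $H_1(N;\Z_p)\onto H_1(M;\Z_p)$; concretely $H_1(M;\Z_p)\cong H_1(N;\Z_p)/\langle[\gamma_1],\dots,[\gamma_c]\rangle$, as filling attaches the meridian $2$--cells that kill these classes. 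In particular
\begin{equation*}
  \operatorname{rk}\big(H_1(M;\Z_p)\big)\le\operatorname{rk}\big(H_1(N;\Z_p)\big),
\end{equation*}
so it suffices to bound the right-hand side in terms of $\vol(N)=\volt(M)$.

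For the displayed inequality I would invoke the crude linear bound $\operatorname{rk}(H_1(N;\Z_p))<168.602\,\vol(N)$, valid for every finite-volume orientable hyperbolic $3$--manifold $N$. This rests on the Margulis lemma: each point of the thick part is the centre of an embedded hyperbolic ball of a fixed radius, so disjoint such balls number at most $\vol(N)$ divided by the ball volume, and a standard net argument bounds the minimal number $d(\pi_1(N))$ of generators of $\pi_1(N)$ by this count. Since $\dim_{\Z_p}H_1(N;\Z_p)\le d(\pi_1(N))$ and the reciprocal of the relevant embedded ball volume is $168.602$, the bound follows. Chaining it with the reduction gives $\operatorname{rk}(H_1(M;\Z_p))\le\operatorname{rk}(H_1(N;\Z_p))<168.602\,\vol(N)=168.602\,\volt(M)$, and dividing by $168.602$ produces the asserted strict inequality, strictness being inherited from the hyperbolic bound.

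For the three sharper estimates I would push the hypotheses through the same reduction and feed them into the sharp mod--$2$ homology bounds of Culler--Shalen and Agol--Culler--Shalen. Thus $\operatorname{rk}(H_1(M;\Z_2))>5$ forces $\operatorname{rk}(H_1(N;\Z_2))\ge 6$, and likewise $>7$ gives $\ge 8$ and $>10$ gives $\ge 11$; the cited theorems then yield $\vol(N)>3.08$, $3.69$, $3.77$, which are exactly the claimed lower bounds for $\volt(M)$. The substantive content is entirely imported here, so the main obstacle is not to reprove these deep hyperbolic estimates but to apply them with the correct constants and, above all, to match the integer thresholds after the $\Z_2$--rank comparison; the one step needing genuine care on our side is checking that this comparison runs in the direction compatible with strictness, which the surjectivity established in the first paragraph guarantees.
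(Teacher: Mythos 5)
Your argument for the first displayed inequality is essentially the paper's: take a volume minimiser $L$, observe that $\operatorname{rk} H_1(\cdot;\Z_p)$ cannot increase under Dehn filling, and quote the Guzman--Shalen bound $\operatorname{rk}(H_1(N;\Z_p)) < 168.602\cdot\volh(N)$, which indeed holds for \emph{all} finite-volume orientable hyperbolic $3$--manifolds, cusped ones included, so applying it to $N=M\sm L$ is legitimate. (Two small remarks: existence of a minimiser comes from the well-ordering of the volume set in J\o{}rgensen--Thurston theory rather than from the finiteness proposition you cite; and your Margulis-lemma/net sketch is not how that particular constant is obtained, but since you are importing the statement as a black box this does not affect the argument.)

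The second part has a genuine gap. The theorems you invoke, giving $\volh(N)>3.08$, $3.69$, $3.77$ when $\operatorname{rk}(H_1(N;\Z_2))>5$, $7$, $10$, are theorems about \emph{closed} hyperbolic $3$--manifolds. Your $N=M\sm L$ is cusped whenever $M$ is non-hyperbolic --- which is precisely the case of interest --- so feeding $N$ into these results is applying them outside their hypotheses; the step as written fails. The paper repairs this with a limiting argument: on each cusp of $M\sm L$ choose a sequence of filling slopes whose lengths tend to infinity and which do not change the mod-$2$ homology rank (e.g.\ slopes congruent to the meridian modulo $2$, with diverging coefficients), obtaining closed manifolds $N_n$ with $\operatorname{rk}(H_1(N_n;\Z_2))=\operatorname{rk}(H_1(M;\Z_2))$. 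By Thurston's hyperbolic Dehn surgery theorem the $N_n$ are hyperbolic for $n$ large and $\volh(N_n)\to\volh(M\sm L)=\volt(M)$ from below, so applying the closed-manifold bounds to the $N_n$ and passing to the limit yields the claimed strict inequalities for $\volt(M)$. Some such detour (or a genuinely cusped version of the volume bounds, which is not what is cited) is needed to close your argument; the ``direction of strictness'' issue you flag is not the real difficulty.
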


These bounds are asymptotically tight in the sense that there exist infinite families of $3$-manifolds, for which lower and upper bounds grow at the same rate up to a multiplicative constant: For instance, in \Cref{ex:SFS} we follow from \Cref{thm:surgery_bound,thm:lower_bounds} that for $M$ a Seifert fibered space over a genus $g$ orbifold with $n$ exceptional fibres of index $p$ a prime we have
\[ \frac{2g + n}{168.602} < \volt(M ) < 37g + 22n - 28.\] 

Dunfield's classification of exceptional Dehn fillings of cusped $3$--manifolds \cite{Du20} allows us to determine the exact topological volume of all non-hyperbolic closed $3$--manifolds of topological volume at most $3.07$. See \Cref{tab:complete} and \Cref{sec:computations} for summaries and discussion of observations arising from this computation.

The last result of this paper observes that for all but finitely many lens spaces, their volume minimiser arises as a Dehn filling of either the complement of the Whitehead link $W$ or its sister manifold $P$. Its proof exhibits the interesting interplay between the computation of topological volume of non-hyperbolic 3--manifolds and properties of the volume spectrum of cusped hyperbolic 3--manifolds.

\begin{theorem}\label{prop:lens-space-limit}
For all but finitely many homeomorphism types of lens spaces $L(p,q)$ ($p>q\geq 1$ coprime), the volume minimising link in $L(p,q)$ has complement one of $W(\frac{p'}{q'}, \infty)$ or $P(\frac{p''}{q''}, \infty)$, where the filling slopes $\frac{p'}{q'}$ and $\frac{p''}{q''}$ can be determined from an efficiently computable finite set. For $P$ this means that we fill along the unknotted component.
\end{theorem}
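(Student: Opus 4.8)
The plan is to locate the minimiser of $\volt(L(p,q))$ relative to the threshold $v_{oct}$, which by Agol's theorem on minimal-volume $2$--cusped hyperbolic $3$--manifolds is the smallest volume of an orientable $2$--cusped hyperbolic $3$--manifold and is attained only by $W$ and $P$. The complement of a volume-minimising hyperbolic link in $L(p,q)$ is a cusped hyperbolic manifold $N_{p,q}$ admitting a Dehn filling homeomorphic to $L(p,q)$, with $\volt(L(p,q)) = \vol(N_{p,q})$. The argument rests on two pillars: (i) a uniform upper bound forcing $\vol(N_{p,q}) < v_{oct}$ for all but finitely many $(p,q)$, so that $N_{p,q}$ has a single cusp; and (ii) a rigidity statement showing that, below $v_{oct}$, all but finitely many one-cusped hyperbolic manifolds are fillings of $W$ or $P$.

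For (ii) I would argue as follows. Since $v_{oct}$ is the minimal $2$--cusped volume and the minimal $3$--cusped volume exceeds it, every cusped hyperbolic manifold of volume $< v_{oct}$ has exactly one cusp. Suppose there were infinitely many distinct one-cusped hyperbolic manifolds $N_i$ of volume $< v_{oct}$, none a filling of $W$ or $P$. By J\o rgensen--Thurston theory \cite{thurston-notes,Gro81,Mai10}, after passing to a subsequence the $N_i$ converge geometrically to a cusped hyperbolic manifold $Q$ with $\vol(Q) = \lim \vol(N_i) \le v_{oct}$, and for all large $i$ the manifold $N_i$ is a nontrivial Dehn filling of $Q$; since filling the unique cusp of a one-cusped manifold yields a closed manifold, $Q$ must have at least two cusps. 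Then $\vol(Q) \ge v_{oct}$ forces $\vol(Q) = v_{oct}$ and $Q \in \{W, P\}$ by Agol's theorem, contradicting that no $N_i$ is a filling of $W$ or $P$. Hence there is only a finite ``sporadic'' set $\Sigma$ of one-cusped manifolds below $v_{oct}$ that are not fillings of $W$ or $P$; each $S \in \Sigma$ admits only finitely many lens-space fillings (lens spaces are non-hyperbolic, and a one-cusped hyperbolic manifold has only finitely many exceptional slopes), so only finitely many lens spaces can have minimiser in $\Sigma$.

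For (i) the task is to exhibit, for all but finitely many $(p,q)$, a hyperbolic link in $L(p,q)$ of volume $< v_{oct}$; combined with (ii) this forces the minimiser to be a filling of $W$ or $P$. I would realise all but finitely many lens spaces as double Dehn fillings $L(p,q) = W(\tfrac{p'}{q'}, \tfrac{r}{s})$ (and similarly for $P$), so that the core of the $\tfrac{r}{s}$--filling solid torus is a knot in $L(p,q)$ whose complement is the one-cusped manifold $W(\tfrac{p'}{q'}, \infty)$; since any nontrivial Dehn filling strictly decreases volume, this complement has volume $< v_{oct}$, whence $\volt(L(p,q)) < v_{oct}$. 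Establishing that the double fillings of $W$ and $P$ exhaust the lens spaces up to finitely many exceptions is where the real work lies, and I expect this exhaustion to be the main obstacle. Concretely I would parametrise the one-cusped fillings of $W$ and $P$ by slope, note that their volumes accumulate only at $v_{oct}$ from below (so that for any target strictly below $v_{oct}$ only finitely many qualify), identify among them those with a filling homeomorphic to $L(p,q)$, and minimise volume over the resulting finite list --- this list being the efficiently computable finite set in the statement.

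Assembling the pieces: after discarding the finite set of lens spaces whose minimiser lies in $\Sigma$, the finite set not realised as a double filling of $W$ or $P$, and the finite set of exceptional slopes, every remaining lens space satisfies $\volt(L(p,q)) < v_{oct}$, so $N_{p,q}$ is one-cusped of volume $< v_{oct}$ and, not being sporadic, is a one-cusped filling of $W$ or $P$, with the filling slope read off from the finite set of step (i). The delicate points to verify are that the geometric limit in (ii) genuinely acquires an extra cusp (standard, as distinct bounded-volume fillings reopen a cusp) and that the exhaustion in (i) is complete; the latter is the crux.
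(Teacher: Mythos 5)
Your proposal is correct and follows essentially the same route as the paper: J\o{}rgensen--Thurston theory combined with Agol's theorem that $W$ and $P$ are the unique minimal-volume orientable two-cusped hyperbolic $3$--manifolds, applied to lens-space minimisers whose volumes are forced below $v_{oct}$ (the paper phrases the rigidity step as a contradiction over infinitely many bad lens spaces, invoking its finiteness proposition where you instead use finiteness of exceptional slopes for each sporadic manifold, but the content is the same). The step you single out as the crux --- realising all but finitely many $L(p,q)$ as double fillings of $W$ or $P$ --- is in fact immediate rather than the main obstacle: the first component of the Whitehead link is an unknot, so $W(\frac{p}{q},\frac{1}{0})\cong L(p,q)$ for every coprime pair and the second component becomes a knot in $L(p,q)$ with complement $W(\frac{p}{q},\infty)$, the only caveat being the finitely many exceptional slopes, which is exactly how the paper (together with the Hodgson--Masai volume asymptotics for the efficiently computable finite set of candidate slopes) proceeds.
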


We hope that this survey over the properties of $\volt$, its refinements, and first results will convince the reader of the usefulness of this topological invariant as both an organising principle as well as a meaningful topological property. And in fact, our article has already inspired further work. In~\cite{schmalian} Schmalian proves that $\volt$ is algorithmically computable. More precisely he describes an algorithm that takes as input a triangulation of a $3$-manifold $M$ and a real number $\varepsilon>0$ and returns in finite time $\volt(M)$ up to an error of at most $\varepsilon$. Similar, results hold true for the refinements of $\volt$ that we discuss below. Moreover, in~\cite{volt2} $\volt$ is studied on product manifolds and computed for some manifolds with boundary. 

\textbf{Acknowledgements.} 
The authors thank the MATRIX Institute in Creswick and the Ma\-the\-ma\-tisches Forschungsinstitut Oberwolfach for a collaborative environment between different continents and time-zones during a tandem workshop in 2021. This is where this work was begun. They thank the organisers and participants of the workshop for stimulating conversations. Special mention goes to Jonathan Bowden, Jessica Purcell and Saul Schleimer for comments and contributions during the initial phase of this project. We also thank the anonymous referee for useful feedback. In particular, for providing the argument for the first statement of \Cref{cor:p-1}.

Figures~\ref{fig:m003}--\ref{fig:m034} were created using KLO~\cite{KLO}.

MK, AR, and ST thank the Max-Planck-Institut f\"ur Mathematik in Bonn, where parts of this work were carried out, for its hospitality. MK was supported by the SFB/TRR 191 \textit{Symplectic Structures in Geometry, Algebra and Dynamics}, funded by the DFG (Projektnummer 281071066 - TRR 191) and is now supported by a Ram\'on y Cajal grant (RYC2023-043251-I) and by project PID2024-157173NB-I00 funded by MCIN/AEI/10.13039/501100011033, ESF+ and FEDER, EU; and by a VII Plan Propio de Investigaci\'on y Transferencia (SOL2025-36103) of the University of Sevilla.
Research of JS and ST is supported in part under the Australian Research Council's Discovery funding scheme (project number DP190102259). JS thanks Michael Joswig and Technische Universit\"at Berlin, where parts of this work were carried out, for their hospitality. ET is supported by an Australian Government Research Training Program (RTP) Scholarship.


\section{Properties, examples and questions}
\label{sec:Properties}

We first define the notion of topological volume and refinements thereof. We then establish general properties of the topological volume, give examples and state questions that arise from our census in \Cref{sec:computations}.

\subsection{Definition of topological volume and its spectrum}\label{sec:definition}

Let $M$ be a closed, orientable 3--manifold.
A \textbf{link} in $M$ is a (possibly empty, possibly disconnected) closed 1--submanifold $L$ of $M.$ 
We consider links up to (not necessarily orientation preserving) diffeomorphisms of their complement $M\setminus L$. This is motivated by the fact that none of the invariants we consider in this article differ on non-isotopic links with diffeomorphic complement.

A link $L$ in $M$ is \textbf{hyperbolic} if its complement admits a complete hyperbolic structure of finite volume. In this case, let $\volh(M\setminus L)$ denote the hyperbolic volume of $M\setminus L.$ We often write $\volh(L) = \volh(M\setminus L).$
Following~\cite[\S 1.3.2]{BBBMP}, let
\begin{equation}
\volt(M) = \inf \big\{ \; \volh(M\setminus L) \;\mid \; L \subset M \text{ is a hyperbolic link } \; \big\}.
\end{equation}
It follows from Mostow--Prasad rigidity that $\volt(M)$ is a topological invariant of $M$, and we term it the \textbf{topological volume} of $M.$ 
The invariant $\volt(M)$ is indeed finite since every closed, orientable 3--manifold contains a hyperbolic knot~\cite{My82,My93}.\footnote{\cite{My93} in fact established that all compact $3$--manifolds, including non-orientable $3$--manifolds, contain hyperbolic knots. However we restrict ourselves to the closed and orientable case.} If $M$ admits a complete hyperbolic structure, then $\volt(M)=\volh(M)$, since volume decreases under Dehn filling.

\begin{example}\label{ex:volume-S3}
As our first example, we observe that $\volt(S^3)=2{v}_0 \approx 2.02988$, where ${v}_0$ is the volume of the ideal regular tetrahedron in hyperbolic $3$-space, and with the figure eight knot as unique minimiser. To see this, recall that \cite{cao-meyerhoff} showed that the complement of the figure eight knot, called $K4a1$ in the Hoste--Thistlethwaite--Weeks knot table \cite{HTWKnots}, and $m004$ in SnapPy's \cite{snappy} \texttt{OrientableCuspedCensus},\footnote{The naming convention throughout this paper follows that of the SnapPy census manifolds. Hence the symbol $m$ means that the corresponding manifolds can be triangulated with at most $5$ ideal tetrahedra.} is a cusped hyperbolic 3--manifold of minimum volume. Indeed, the cusped hyperbolic 3--manifolds of minimum volume are the once-punctured torus bundles $m003$ and $m004$ and they are related by a change in monodromy. However, $m003$ admits no filling to $S^3$ and thus does not represent a knot complement in $S^3$. All exceptional fillings of these two manifolds have the same topological volume.
\end{example}

According to J\o{}rgensen and Thurston (see~\cite{thurston-notes,Gro81, Mai10}), the set of volumes of complete orientable hyperbolic 3--manifolds of finite volume is a closed, non-discrete, well-ordered subset of the positive reals of order type $\omega^\omega.$ Throughout this article, we refer to this fact, as well as related results, as J\o{}rgensen--Thurston theory. In particular, there is always at least one hyperbolic link  $L \subset M$ with $\volt(M) = \volh(M\setminus L)$. We term such a link a \textbf{volume minimiser}. We say that two minimising links are equivalent if they have diffeomorphic complements. Occasionally we refer to the least volume hyperbolic link complement in $M$ as a volume minimiser of $M$ as well. 

\begin{question}
Let $M$ be a closed, orientable $3$--manifold $M$. Suppose $L_1$ and $L_2$ are links realising $\volt(M)$. Is $M \setminus L_1$ diffeomorphic with $M \setminus L_2$?
\end{question}

The manifold $m003$ appears as complement of a minimiser in $L(5,1)$ in two different ways---through the fillings $m003(-1, 1)$ and $m003(0, 1).$ However, there is a diffeomorphism of $m003$ that sends one surgery slope to the other. In particular, the minimising hyperbolic knot in $L(5,1)$ is the same. Hence this is not a counterexample to the following question, which is equivalent to the previous one if the cosmetic surgery conjecture is true.

\begin{question}
Let $M$ be a closed, orientable $3$--manifold $M$. Suppose $L_1$ and $L_2$ are links realising $\volt(M)$. Is there a diffeomorphism $M\to M$ taking  $L_1$ to $L_2$?
\end{question}

It is shown in~\cite{Adams_unbounded_volume} that the set of volumes of hyperbolic knots in any 3--manifold is unbounded. Hence not only determining the minimum but also the study of the spectrum of $\volt$ is interesting. 
Let $\mathcal{V}$ be the set of volumes of all cusped hyperbolic 3--manifolds, and $\mathcal{N}$ be the set of all closed, non-hyperbolic, orientable 3--manifolds. 
\begin{question}
Is $\volt\co \mathcal{N} \to \mathcal{V}$ surjective?
\end{question}

\subsection{Refinements of topological volume}
\label{ssec:refinements}

Let $M$ be a closed, orientable $3$--manifold. For each integer $n\ge 0,$ we define the invariant
\begin{equation}
\volt_n(M) = \inf \big\{ \; \volh(M\setminus K) \;\mid \; K \subset M \text{ is a hyperbolic $n$--component link } \; \big\}.
\end{equation}
For all $n\ge 1$ and all closed, orientable 3--manifolds $M$,
we have $\volt(M)\le \volt_n(M)$. 
In particular, $\volt(M)=\volt_0(M)< \volt_n(M)$ if and only if $M$ is hyperbolic. This also allows us to define an integer valued invariant
\[
h(M) = \min \{ \; n \; \mid \volt(M)= \volt_n(M)\;\}.
\]
Hence $M$ is hyperbolic if and only if $h(M)=0.$ The examples presented in \Cref{sec:computations} and based on the census in~\cite{GHMTY21} satisfy $h(M)=1.$

\begin{question}\label{que:unbounded_h(M)}
Is there an explicit family of examples $(M_n)$ with $h(M_n)=n$?
\end{question}

Myers~\cite{My93} showed that every homology class (indeed every free homotopy class) in a closed $3$--manifold is represented by some hyperbolic knot. More generally, he showed that every given finite set of free homotopy classes can be represented by a hyperbolic link. Hence for each class $c \in H_1(M),$ there is an invariant
\begin{equation}
\volt(M, c) = \inf \big\{ \; \volh(M\setminus L) \;\mid \; L \subset M \text{ is an oriented hyperbolic link with } [L]=c\; \big\}.
\end{equation}
A natural question is to ask which homology classes satisfy $\volt(M, c)= \volt(M).$
Some examples and a further discussion of this refinement can be found in \Cref{sec:Homology classes of minimisers}.

\subsection{A finiteness result}

According to J\o{}rgensen--Thurston theory \cite{thurston-notes,Gro81, Mai10}, for each positive real number $v$, there are at most finitely many hyperbolic 3--manifolds of volume $v.$ We show that the analogous result holds for topological volume.

\begin{proposition}\label{pro:finiteness}
For each positive real number $v$, there are at most finitely many closed, orientable 3--manifolds of topological volume $v.$
\end{proposition}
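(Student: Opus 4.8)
The plan is to argue by contradiction. Suppose there were infinitely many pairwise non-homeomorphic closed orientable $3$--manifolds of topological volume exactly $v$; I will exhibit, in all but finitely many of them, a hyperbolic link whose complement has volume strictly below $v$, contradicting the definition of $\volt$.

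First I would use J\o{}rgensen--Thurston theory to pin down the minimiser complements. Each such $M$ admits a volume minimiser $L$, so $M\setminus L$ is a complete finite-volume hyperbolic $3$--manifold with $\volh(M\setminus L)=v$. Since only finitely many hyperbolic $3$--manifolds share a given volume, these complements realise only finitely many homeomorphism types, and after passing to a subsequence I may assume they all equal a fixed hyperbolic manifold $N$ with $\volh(N)=v$. The hyperbolic $M$ in our family are finite in number (for them the empty link is the minimiser, and there are only finitely many closed hyperbolic $3$--manifolds of volume $v$), so I discard them and reduce to infinitely many non-homeomorphic non-hyperbolic $M_j$, each obtained from $N$ by Dehn filling all of its cusps. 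As $N$ has finite isometry group, these correspond to infinitely many distinct filling slope--tuples $\vec\gamma_j$.

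The core of the argument analyses how these slopes diverge. Since $N$ has finitely many cusps, a diagonal subsequence ensures that on each cusp the slope is either eventually constant or leaves every finite set (so its geodesic length tends to infinity), with at least one cusp of the latter, diverging, type; let $S$ denote the diverging cusps. Let $L_j'\subset M_j$ be the union of the cores of the solid tori filling the cusps \emph{not} in $S$. Reopening those cusps identifies $M_j\setminus L_j'$ with the manifold obtained from $N$ by filling only the $S$--cusps along long slopes while leaving the others open. By Thurston's hyperbolic Dehn surgery theorem this is hyperbolic for all large $j$, and since a nontrivial filling strictly decreases volume one has $\volh(M_j\setminus L_j')<\volh(N)=v$. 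Thus $L_j'$ is a hyperbolic link in $M_j$ with $\volt(M_j)\le\volh(M_j\setminus L_j')<v$, the desired contradiction; when every cusp diverges, $L_j'$ is empty and $M_j$ itself is hyperbolic of volume $<v$, with the same conclusion.

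The main obstacle --- and the reason the claim is not routine --- is that a multi-cusped hyperbolic manifold typically has \emph{infinitely many} non-hyperbolic closed Dehn fillings: filling one cusp of the Whitehead link complement along its meridian already produces a solid torus, whose fillings are all the lens spaces. So one cannot simply invoke finiteness of exceptional fillings. What saves the argument is that such infinite families diverge only along a proper subset of the cusps, and reopening the cores of the remaining (bounded) cusps exposes a strictly smaller hyperbolic link complement. I expect the only technical points requiring care to be checking that slope divergence indeed forces the filling geodesics to grow, so that Thurston's theorem applies, and that the reopened filling is nontrivial, which is guaranteed by $S\neq\varnothing$.
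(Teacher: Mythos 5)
Your proposal is correct and follows essentially the same route as the paper: both reduce via J\o{}rgensen--Thurston to a single cusped hyperbolic manifold $N$ realising infinitely many of the putative minimiser complements, and both derive the contradiction by observing that re-opening some of the filled cusps (the cores of the corresponding filling tori form a link) would yield a hyperbolic link complement of volume strictly less than $v$ unless the relevant slopes are exceptional, which Thurston's Dehn surgery theorem permits for only finitely many slopes. The only cosmetic difference is that the paper argues cusp-by-cusp (each single-cusp filling of $N$ must be non-hyperbolic, so each coordinate of the filling tuple lies in a finite set), whereas you pass to a subsequence and fill the set of diverging cusps simultaneously.
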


\begin{proof}
Suppose there are infinitely many closed 3--manifolds of topological volume $v$. Since only finitely many of them can be hyperbolic~\cite{thurston-notes}, this implies that there are infinitely many closed {\it non-hyperbolic} 3--manifolds $M_i$ of topological volume $v.$
In each $M_i$, choose a minimiser $L_i$. Then $\volh(M_i\setminus L_i) =v$, for each $i$, and hence there are only finitely many possibilities for $M_i\setminus L_i.$ For the rest of the proof suppose the cusped hyperbolic 3--manifold $M$ is a minimiser for infinitely many closed non-hyperbolic 3--manifolds $M_i$. In particular, for these manifolds, we have $M_i = M\big{(}\frac{p^i_1}{q^i_1}, \ldots, \frac{p^i_k}{q^i_k}\big{)}$. Since $M$ is a minimiser, leaving $k-1$ cusps unfilled and filling the $j$--th cusp with surgery coefficient $\frac{p^i_j}{q^i_j}$ gives a non-hyperbolic 3--manifold, since otherwise we would have a hyperbolic link complement of smaller volume than $v$ in $M_i.$ But only finitely many slopes $\frac{p^i_j}{q^i_j}$ on the $j$--th cusp may lead to a non-hyperbolic filling~\cite{thurston-notes}. Observing this for every cusp independently, we conclude that there are only finitely many possibilities for $M_i.$
\end{proof}

\subsection{Covers and branched covers}
\label{sec:covers}

It is straightforward to see that hyperbolic volume is multiplicative under finite-sheeted covers. The following example shows that the analogous fact is not true for topological volume in general. 

\begin{example}
We show in \cref{sec:computations} that $m009$ (Snappy notation) is the unique minimiser of $\R P^3,$ and hence 
$\volt(\R P^3)=\volh(m009)\approx 2.66674$. One sees this by noting that $\R P^3$ is obtained as a Dehn filling of $m009$ in a unique way, but it is not obtained by filling a cusped hyperbolic 3--manifold of smaller volume. As we saw in \cref{ex:volume-S3}, we have $\volt(S^3)\approx 2.02988$, so 
$$\volt(S^3) < 2\volt(\R P^3).$$
This shows that the topological volume is not multiplicative under finite covers in general. A surgery description of $m009$ is given by performing a $2$-surgery on one component of the Whitehead link $L5a1$ (using notation from the Thistlethwaite link table). The complement of the other link component in this surgery yields $m009$. From this surgery description one can see that the lift to $S^3$ of the minimiser in $\R P^3$  is the complement of $L6a1$ with volume $2 \cdot \volt (\mathbb{R}P^3) \approx 5.33$.
\end{example}

Nonetheless we have the following behaviour of topological volume under finite-sheeted covers. 

\begin{lemma}\label{lem:covers}
Let $X$ and $Y$ be closed, orientable 3--manifolds. If there is a $d$--fold cover $X\to  Y$, then 
\begin{equation}
\label{eq:covering}
\volt(X) \leq  d\cdot \volt(Y).
\end{equation}
\end{lemma}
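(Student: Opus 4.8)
The plan is to start with a volume minimiser $L\subset Y$ realising $\volt(Y)=\volh(Y\setminus L)$, whose existence is guaranteed by the J\o{}rgensen--Thurston theory discussion above, and to pull it back along the covering map $p\co X\to Y$. Set $\widetilde{L}=p^{-1}(L)\subset X$. Since $p$ is a $d$--fold covering of closed orientable $3$--manifolds, restricting $p$ to the complements gives a covering map
\begin{equation*}
p\co X\setminus\widetilde{L}\longrightarrow Y\setminus L
\end{equation*}
which is again $d$--fold, because the preimage of each point of $Y\setminus L$ consists of exactly $d$ points. The key geometric input is that the complete finite-volume hyperbolic structure on $Y\setminus L$ pulls back along this covering to a complete finite-volume hyperbolic structure on $X\setminus\widetilde{L}$: hyperbolic metrics lift through coverings, completeness is a local-plus-properness condition preserved under finite covers, and volume multiplies by the degree. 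Hence $X\setminus\widetilde{L}$ is hyperbolic with $\volh(X\setminus\widetilde{L})=d\cdot\volh(Y\setminus L)=d\cdot\volt(Y)$, so $\widetilde{L}$ is a hyperbolic link in $X$.

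With $\widetilde{L}$ established as a genuine hyperbolic link in $X$, the inequality is immediate from the definition of topological volume as an infimum: $\volt(X)$ is a lower bound over all hyperbolic links, so in particular
\begin{equation*}
\volt(X)\leq \volh(X\setminus\widetilde{L})=d\cdot\volt(Y),
\end{equation*}
which is exactly \eqref{eq:covering}. So the structure of the argument is short: produce one specific hyperbolic link in $X$ of the right volume by pulling back the minimiser, then invoke the infimum.

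The main subtlety I would want to check carefully is the very first step, namely that $\widetilde{L}=p^{-1}(L)$ is a link in the sense used here, i.e. a closed $1$--submanifold, and that the covering genuinely restricts to a covering of the complements. Because $p$ is a covering (not just a branched covering), $p^{-1}(L)$ is automatically a properly embedded closed $1$--manifold: near any component of $L$ the map $p$ is a trivial covering of a tubular neighbourhood, so $\widetilde{L}$ is a disjoint union of circles, each mapping to a component of $L$ by some covering of $S^1$. It could have more or fewer components than $d\lvert L\rvert$ depending on how the monodromy acts, but its number of components is irrelevant for the volume count. The only point needing a word is that $p^{-1}(L)$ does not meet itself or degenerate, which is guaranteed precisely because $p$ is an unbranched cover; this is where the hypothesis that $X\to Y$ is a covering (rather than an arbitrary degree-$d$ map) is used. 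I would also note that the empty link is allowed, so the statement remains valid and sharp in the case $Y$ is itself hyperbolic.

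One remark worth including is that this argument only uses that \emph{some} minimiser of $Y$ exists and pulls back, so no choice or canonicity is needed; and conversely the preceding example shows the inequality can be strict, so $d\cdot\volt(Y)$ is genuinely only an upper bound for $\volt(X)$ and not an equality in general.
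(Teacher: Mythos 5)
Your proof is correct and follows essentially the same route as the paper's: pull back a volume minimiser $L\subset Y$ along the covering, observe that $X\setminus p^{-1}(L)\to Y\setminus L$ is a $d$--fold cover so the complement is hyperbolic with $d$ times the volume, and invoke the infimum defining $\volt(X)$. The extra care you take in checking that $p^{-1}(L)$ is a genuine link and that the complete hyperbolic structure lifts is sensible but not a deviation from the paper's argument.
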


\begin{proof}
Suppose $X$ and $Y$ are closed, orientable 3--manifolds with $p\co X \to Y$ a $d$--fold cover.
Suppose $L $ in $Y$ is a hyperbolic link. Then $X \setminus p^{-1}(L) \to Y \setminus L$ is a $d$--fold cover and in particular $p^{-1}(L) \subset X$ is a hyperbolic link, with $\volh (X \setminus p^{-1}(L)) =d \cdot  \volh(Y \setminus L)$. A minimiser $L \subset Y$  hence gives 
$\volt(X) \le \volh (X \setminus p^{-1}(L)) =d \cdot  \volh(Y \setminus L) = d \cdot  \volt(Y).$
\end{proof}

\begin{example}
For each $d>0,$ choose $p$ and $q$ such that $d \cdot p$ and $q$ are coprime. Then we have a $d$--fold covering of lens spaces 
$L(p, q) \to L(d\cdot p,q).$
 We show in \cref{prop:lens-space-limit} that for $p$ sufficiently large, $\volt(L(d\cdot p, q))$ and $\volt(L(p, q))$ are arbitrarily close to the volume of the Whitehead link complement. Hence the gap in \Cref{eq:covering} can be arbitrarily large, as $d$ varies.
\end{example}

\begin{question}
If $X\to  Y$ is a $d$--fold cover of non-hyperbolic closed, orientable 3--manifolds, is $\volt(X) <  d\cdot \volt(Y)$, that is, is Inequality~(\ref{eq:covering}) always strict?
\end{question}

As an application of \cref{lem:covers} we observe that knowledge about the topological volume can be leveraged to give bounds on degrees of covers. Recall that a link $L\subset S^3$ is called \emph{universal} if every closed, orientable 3--manifold arises as a cover of $S^3$ branched along $L$. The existence of universal links was first shown by Thurston in unpublished work. By work of Hilden, Lozano, and Montesinos~\cite{HLM2,HLM4,HLM3}, we know that there are numerous universal links, including the figure eight knot, the Whitehead link, and the Borromean rings.

\begin{corollary}
Let $L$ be a hyperbolic universal link in $S^3$ and let $d_L(M)$ denote the minimal degree of a cover $M\rightarrow S^3$ branched along $L$. Then
\begin{align*}
    \volt(M)\leq d_L(M) \volh(L).
\end{align*}
For example, we have
\[
\pushQED{\qed} 
 \volt(M)\leq d_{K4a1}(M) \volh(K4a1)=d_{K4a1}(M)\volt(S^3).\qedhere
\popQED
\]  
\end{corollary}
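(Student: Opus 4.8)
The plan is to reduce everything to the argument already used in \Cref{lem:covers}, exploiting the fact that a branched cover restricts to an honest (unbranched) cover away from its branch locus. First I would fix a branched cover $p\co M \to S^3$ of minimal degree $d=d_L(M)$ branched along $L$. By the definition of a branched cover, $p$ restricts to a genuine $d$--fold covering map $M\setminus p^{-1}(L) \to S^3\setminus L$, and the preimage $p^{-1}(L)\subset M$ is again a closed $1$--submanifold, hence a link.

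Next I would show that $p^{-1}(L)$ is hyperbolic. Since $L$ is a hyperbolic link in $S^3$, the complement $S^3\setminus L$ carries a complete, finite-volume hyperbolic structure. Pulling this structure back along the covering map $M\setminus p^{-1}(L)\to S^3\setminus L$ equips $M\setminus p^{-1}(L)$ with a complete hyperbolic structure, which still has finite volume because volume is multiplicative under finite covers: $\volh(M\setminus p^{-1}(L)) = d\cdot \volh(S^3\setminus L) = d_L(M)\volh(L)$. Thus $p^{-1}(L)$ is a hyperbolic link in $M$ of the asserted volume.

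The inequality is then immediate from the definition of $\volt$ as an infimum: because $p^{-1}(L)$ is one particular hyperbolic link in $M$, its complement's volume bounds the infimum from above, yielding $\volt(M)\le \volh(M\setminus p^{-1}(L)) = d_L(M)\volh(L)$. For the displayed example I would specialise to the figure eight knot $L=K4a1$, which is universal by the work of Hilden, Lozano, and Montesinos \cite{HLM2,HLM4,HLM3} and is hyperbolic, and then substitute the identity $\volh(K4a1)=\volt(S^3)$ recorded in \Cref{ex:volume-S3}.

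I do not anticipate a genuine obstacle here: the statement is essentially \Cref{lem:covers} applied to the induced cover of complements. The only point requiring care is the standard structural fact that restricting a branched cover to the complement of its branch locus produces an unbranched cover, and that $p^{-1}(L)$ is itself a link, so that the volume-multiplicativity computation carries over verbatim.
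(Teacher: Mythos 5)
Your proposal is correct and matches the argument the paper intends: the corollary is stated as an application of \Cref{lem:covers}, and the only additional ingredient is precisely the one you supply, namely that a degree-$d$ branched cover restricts to an honest $d$--fold cover of link complements, so that $p^{-1}(L)$ is a hyperbolic link in $M$ with $\volh(M\setminus p^{-1}(L)) = d_L(M)\volh(L)$. The specialisation to $K4a1$ via \Cref{ex:volume-S3} is also exactly as in the paper.
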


In particular, we obtain general lower bounds on $d_L(M)$ from the the topological volumes of $S^3$ and $M$. On the other hand, we remark that $d_L(M)$ is algorithmically computable, since the homeomorphism problem for $3$-manifolds is decidable: there exists a deterministic algorithm to decide if two given $3$-manifolds are homeomorphic or not~\cite{Kuperberg}. Thus we can enumerate all branched covers of $L$ ordered by the degree of the covering and check when the first such manifold is homeomorphic to $M$.

\subsection{Constructions of unbounded topological volume}
\label{ssec:unbounded}

The main observation used in this section is that, following J\o{}rgensen--Thurston theory \cite{thurston-notes,Gro81, Mai10}, all hyperbolic 3--manifolds of volume bounded from above by a constant are obtained by Dehn surgery on a finite set of cusped hyperbolic 3--manifolds. In particular, every infinite sequence of hyperbolic manifolds of bounded volume must have a subsequence with converging volumes, and which is obtained from a sequence of Dehn fillings on a fixed cusped hyperbolic $3$--manifold. This implies bounds on a number of topological invariants of those manifolds that have bounded topological volume, such as rank of the fundamental group, rank of homology, and hence number of JSJ pieces or prime summands in a decomposition of the manifold. In more precise terms we have the following result.

\begin{proposition}\label{prop:increasing-complexity}
	Let $\mathbf{I}$ be a real valued topological invariant of $3$--manifolds with the following property: If $M$ denotes a $3$--manifold with at least one torus boundary component $T$, then $\mathbf{I}(M)\geq \mathbf{I}(M(r))$ for all but finitely many slopes $r$, where $M(r)$ denotes the Dehn-filling of $M$ with slope $r$ along the boundary component $T$.
	Let $(M_n)_{n\in\N}$ be a family of closed, orientable $3$--manifolds with $\mathbf{I}(M_n) \rightarrow \infty$ for $n\rightarrow  \infty$. Then $\volt(M_n)\rightarrow \infty$ for $n\rightarrow \infty.$
\end{proposition}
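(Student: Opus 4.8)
The plan is to argue by contraposition: assuming $\volt(M_n)$ does not tend to infinity, I would extract a subsequence of bounded topological volume and show that $\mathbf{I}$ must be bounded along it, contradicting $\mathbf{I}(M_n)\to\infty$. Concretely, if $\volt(M_n)\not\to\infty$ there is a constant $V$ and a subsequence, which I relabel $(M_k)$, with $\volt(M_k)\le V$ for all $k$. For each $k$ choose a volume minimiser $L_k$ (possibly empty), so that $W_k:=M_k\setminus L_k$ is a complete hyperbolic $3$--manifold with $\volh(W_k)=\volt(M_k)\le V$, and $M_k$ is obtained from $W_k$ by Dehn filling all of its cusps (this last statement being vacuous, but still correct, when $L_k=\varnothing$).

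Next I would invoke J\o{}rgensen--Thurston theory exactly as recalled above: there is a finite collection of cusped hyperbolic $3$--manifolds $N_1,\dots,N_m$ such that every complete hyperbolic $3$--manifold of volume at most $V$ is obtained by Dehn filling some subset of the cusps of one of the $N_i$. In particular each $W_k$ is a partial filling of some $N_{i(k)}$, so that each $M_k$ is a \emph{full} Dehn filling of $N_{i(k)}$. Passing to a further subsequence by the pigeonhole principle, I may assume that all $M_k$ are obtained by Dehn filling every cusp of one fixed compact manifold $N$ with, say, $\ell$ torus boundary components. The problem is thereby reduced to the following statement: for such a fixed $N$, the invariant $\mathbf{I}$ is bounded on the set of all closed manifolds obtained by Dehn filling $N$.

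I would attack this by induction on $\ell$, filling one cusp at a time. For $\ell=1$ the hypothesis gives $\mathbf{I}(N(r))\le \mathbf{I}(N)$ for all but finitely many slopes $r$, so $\mathbf{I}$ exceeds $\mathbf{I}(N)$ on only finitely many of the fillings $N(r)$ and is therefore bounded. For $\ell\ge 2$ I would write each closed filling as $N(r_1,\dots,r_\ell)$ and pass to a subsequence so that, on each cusp, the slopes are either constant or pairwise distinct. If some cusp carries a constant slope, the fillings factor through a single fixed manifold $N'$ with $\ell-1$ torus boundary components, and I conclude by the inductive hypothesis applied to $N'$. If instead the slopes on a chosen cusp are pairwise distinct, their lengths tend to infinity (only finitely many slopes on a cusp have bounded length), so for all but finitely many $k$ the slope avoids the finite exceptional set of that cusp and the per-cusp hypothesis yields $\mathbf{I}(N(r_k,\ast))\le\mathbf{I}(N)$, reducing matters to the $(\ell-1)$--cusped manifolds $N(r_k,\ast)$.

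The step I expect to be the main obstacle is exactly this last reduction when \emph{several} cusps must be filled with varying slopes at once: the finite exceptional set for an inner cusp depends on the slopes already chosen on the outer cusps, so the clean bound ``all but finitely many multi-slopes are harmless'' does not follow formally from the one-cusp hypothesis, and the intermediate manifolds $N(r_k,\ast)$ genuinely vary with $k$. My intended remedy is to combine the subsequence normalisation above with Thurston's hyperbolic Dehn surgery theorem: once the varying slopes are long the intermediate fillings are hyperbolic, converge geometrically to $N$, and have volume at most $\volh(N)\le V$; a second application of the finiteness of J\o{}rgensen--Thurston parents then confines these intermediate manifolds to fillings of finitely many fixed cusped manifolds, which is what restores the uniformity needed to run the induction. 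Making this simultaneous-escaping case fully rigorous is the delicate heart of the argument. Finally, I would remark that for the invariants that motivate this result --- the ranks of $H_1$ and of $\pi_1$, and the numbers of JSJ pieces and of prime summands --- Dehn filling never \emph{increases} $\mathbf{I}$ and there are no exceptional slopes at all, so that $\mathbf{I}(N(r_1,\dots,r_\ell))\le\mathbf{I}(N)$ holds for every multi-slope and the entire induction collapses to a one-line estimate.
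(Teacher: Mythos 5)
Your overall strategy --- contraposition, extraction of a subsequence of bounded topological volume, J\o{}rgensen--Thurston finiteness to realise all the $M_k$ as full Dehn fillings of one fixed cusped manifold $N$, and then boundedness of $\mathbf{I}$ on such fillings --- is exactly the strategy of the paper, which compresses all of this into three sentences. You have, however, correctly isolated the one step that is not a formal consequence of the stated hypothesis: when $N$ has $\ell\geq 2$ cusps and several filling slopes vary with $k$, the hypothesis only provides, for each \emph{individual} intermediate manifold $P_k=N(r_1^k,\ast,\dots,\ast)$, a finite exceptional set of slopes on the next cusp, and this set depends on $k$ with no uniformity. Your proposed remedy does not close this gap: a second application of J\o{}rgensen--Thurston confines the $P_k$ to fillings of finitely many fixed parents, but the hypothesis on $\mathbf{I}$ is a per-manifold finiteness statement, and knowing where the $P_k$ come from gives no control over the size or location of the exceptional set attached to each $P_k$; nothing prevents $r_2^k$ from lying in the exceptional set of $P_k$ for every $k$. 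So the ``delicate heart'' you flag is not merely delicate --- as far as I can see it cannot be repaired from the hypothesis as stated, and geometric convergence of the $P_k$ to $N$ is of no help because the hypothesis is purely topological.

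You should also be aware that the paper's own proof contains the same elision: it reduces to fillings of a fixed cusped manifold and then asserts that ``by assumption'' the values $\mathbf{I}(M_n)$ are bounded, which is immediate only when that manifold has a single cusp or when the exceptional sets are empty. Your closing remark is therefore the essential one: for every invariant to which the proposition is applied in \Cref{cor:topinv} (rank of $H_1$ or of $\pi_1$, number of prime summands or JSJ pieces), Dehn filling never increases $\mathbf{I}$ and there are no exceptional slopes at all, so $\mathbf{I}(N(r_1,\dots,r_\ell))\leq\mathbf{I}(N)$ holds for every multi-slope and the argument is complete. To obtain the proposition in its stated generality one should either strengthen the hypothesis so that the exceptional slopes on each cusp can be chosen independently of how the remaining cusps are filled, or simply assume monotonicity of $\mathbf{I}$ under all Dehn fillings, which suffices for every application in the paper.
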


\begin{proof}
Suppose that $(\volt(N_m))_{m\in \N}$ is a bounded sequence. Then, by J\o{}rgensen--Thurston theory, as explained at the beginning of \Cref {ssec:unbounded}, the elements of some subsequence $(N_{m_n})_{n\in\N}=(M_n)_{n\in\N}$ are obtained by Dehn filling a fixed cusped hyperbolic $3$--manifold $M$. This implies that, by assumption, the values $\mathbf{I} (M_{n})$ must also be bounded. This is a contradiction.
\end{proof}

\Cref{prop:increasing-complexity} has the following immediate consequences.

\begin{corollary}\label{cor:topinv}\hfill
	\begin{enumerate}
		\item Let $(M_n)$ be a sequence of closed, orientable $3$--manifolds with unbounded Heegaard genus or surgery number. Then $(\volt(M_n))$ is unbounded.
		\item Let $(M_n)$ be a sequence of closed, orientable $3$--manifolds with unbounded rank of first homology or fundamental group. Then $(\volt(M_n))$ is unbounded.
		\item For each $i$, let $M_i$ be a closed, orientable 3--manifold which is not homeomorphic to $S^3$. Then $\volt(M_1\#\cdots \#M_n) \to \infty$ as $n\to\infty$. 
		\item Let $(M_n)$ be a sequence of closed, orientable Seifert fibered spaces such that the number of exceptional fibres is unbounded. Then $(\volt(M_n))$ is unbounded. 
	\end{enumerate}
\end{corollary}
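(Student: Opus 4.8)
The plan is to treat each of the four statements by exhibiting a suitable topological invariant that is unbounded along the given sequence and that satisfies (or nearly satisfies) the non-increasing-under-filling hypothesis of \Cref{prop:increasing-complexity}. For the invariants $\operatorname{rk}H_1$ and $\operatorname{rank}\pi_1$ in part (2) this is immediate: Dehn filling a torus boundary component of $M$ along a slope $r$ adjoins a single relation, namely the normal closure of the filling slope, so $\pi_1(M(r))=\pi_1(M)/\langle\langle\mu_r\rangle\rangle$ and hence $\operatorname{rank}\pi_1(M(r))\le\operatorname{rank}\pi_1(M)$ and $\operatorname{rk}H_1(M(r))\le\operatorname{rk}H_1(M)$ for \emph{every} slope. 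Thus both ranks meet the hypothesis of \Cref{prop:increasing-complexity} and part (2) follows directly. For Heegaard genus in part (1) I would use the standard fact that attaching a filling solid torus to the negative boundary of a compression body in a genus-$g$ Heegaard splitting again yields a compression body of genus at most $g$; hence Heegaard genus is non-increasing under filling for all slopes, and the proposition applies verbatim.

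The surgery number in part (1) is the one invariant that genuinely fails the hypothesis, since filling the boundary of a solid torus already raises it from $0$ to $1$. Here I would instead argue along the lines of the proof of \Cref{prop:increasing-complexity}. If $\volt(M_n)$ were bounded, a subsequence would be obtained by filling the $k$ cusps of a fixed cusped hyperbolic manifold $M$. Fixing once and for all a description of $M$ as the exterior of a $k$-component link $c\subset S^3$ on which one also performs surgery along a disjoint link $L_0$ (obtained by capping off the cusps, writing the closed manifold as surgery on a link, and isotoping the cores off the surgery solid tori), every such filling $M(r_1,\dots,r_k)$ becomes surgery on $L_0\cup c$, a link of at most $\lvert L_0\rvert+k$ components. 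The surgery numbers would therefore be bounded, contradicting the hypothesis.

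For part (3) I would pass to the fundamental group. By Grushko's theorem $\operatorname{rank}\pi_1(M_1\#\cdots\#M_n)=\sum_{i=1}^n\operatorname{rank}\pi_1(M_i)$, and each summand is at least $1$ because $M_i\not\homeo S^3$ forces $\pi_1(M_i)\neq 1$ by the Poincar\'e conjecture. Hence the rank grows at least linearly in $n$, and part (2) applies. Note that homology will not suffice here, as the summands may be homology spheres.

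Part (4) is the most delicate, since the number of exceptional fibres is not defined on the hyperbolic pieces occurring in the proposition, is not monotone under filling, and is invisible to homological rank (witness the Brieskorn homology spheres). The plan is again to reduce to $\operatorname{rank}\pi_1$: for $n$ large the base orbifold $\mathcal{O}_n$ is hyperbolic, the regular fibre generates an infinite cyclic normal subgroup, and the quotient is the orbifold group, giving a surjection $\pi_1(M_n)\onto\pi_1^{\mathrm{orb}}(\mathcal{O}_n)$ and therefore $\operatorname{rank}\pi_1(M_n)\ge\operatorname{rank}\pi_1^{\mathrm{orb}}(\mathcal{O}_n)$. The crux is a lower bound on the rank of a cocompact Fuchsian group in terms of its orbifold Euler characteristic: taking a torsion-free finite-index surface subgroup $H\le\pi_1^{\mathrm{orb}}(\mathcal{O}_n)$ of index $d$ via Selberg's lemma and applying the Reidemeister--Schreier bound $\operatorname{rank}(H)\le 1+d(\operatorname{rank}(G)-1)$ to $G=\pi_1^{\mathrm{orb}}(\mathcal{O}_n)$ with $\operatorname{rank}(H)=2h$ and $2-2h=d\,\chi^{\mathrm{orb}}(\mathcal{O}_n)$ yields $\operatorname{rank}\pi_1^{\mathrm{orb}}(\mathcal{O}_n)\ge 1-\chi^{\mathrm{orb}}(\mathcal{O}_n)$. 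Since each of the $n$ cone points contributes at least $\tfrac12$ to $-\chi^{\mathrm{orb}}$, this rank tends to infinity with $n$, and part (2) finishes the argument. I expect this Fuchsian rank estimate to be the main obstacle, the rest being verifications of standard non-increasing behaviour under Dehn filling.
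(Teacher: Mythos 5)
Your proposal is correct, and for parts (1)--(3) it follows essentially the paper's route: the paper invokes \Cref{prop:increasing-complexity} after asserting that Heegaard genus, surgery number, and the ranks of $H_1$ and $\pi_1$ do not decrease under Dehn filling, and derives (3) from (2) via additivity of $\operatorname{rank}\pi_1$ under connected sum together with the Poincar\'e conjecture, exactly as you do. Your extra care with the surgery number --- arguing directly that a bounded-$\volt$ subsequence consists of surgeries on a fixed link $L_0\cup c$ of boundedly many components, rather than asserting monotonicity under filling --- is a sensible substitute for the paper's blanket claim, since that invariant is the one for which the hypothesis of \Cref{prop:increasing-complexity} is least obviously satisfied. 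The genuine divergence is in part (4): the paper deduces it from (2) by claiming that the rank of first homology is unbounded when the number of exceptional fibres is unbounded. As you observe, this claim fails for general Seifert invariants (Brieskorn homology spheres have arbitrarily many exceptional fibres and trivial $H_1$), so the paper's one-line justification only covers situations like \Cref{ex:SFS}, where all fibres have the same prime index and one can use $\Z_p$-coefficients. Your route through the surjection $\pi_1(M_n)\onto\pi_1^{\mathrm{orb}}(\mathcal{O}_n)$ and the Fuchsian rank estimate is sound --- the Selberg/Reidemeister--Schreier computation in fact gives $\operatorname{rank}\ge 1+\tfrac1d-\chi^{\mathrm{orb}}>1-\chi^{\mathrm{orb}}\ge \tfrac{n}{2}-1$, with the orientation double cover handling a possibly non-orientable base --- and it proves (4) in full generality at the cost of importing that estimate; the paper's argument is shorter but, as written, has a gap for exactly the examples you name.
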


\begin{proof}
$(1)$ and $(2)$ follow directly from~\Cref{prop:increasing-complexity} since these complexities are well-known not to increase under Dehn filling.
$(3)$ follows from $(2)$ and the fact that the rank of the fundamental group is additive under connected sum. Similarly, the rank of the first homology is unbounded if the number of exceptional fibres in a Seifert fibered space is unbounded and thus $(4)$ follows.
\end{proof}

\section{Bounds on topological volume}
\label{sec:bounds}

Bounds on the topological volume of manifolds in terms of covers and branched covers were determined in \Cref{sec:covers}. 
We now give upper and lower bounds arising from topological or algebraic properties of a non-hyperbolic 3--manifold.

\subsection{Upper bounds from surgery descriptions or triangulations}
\label{ssec:upperlower}

Suppose the $3$--manifold $M$ is given by surgery on a hyperbolic link $L \subset S^3.$ By definition of the surgery diagram, there is a link $L^* \subset M$ such that $S^3 \setminus L = M\setminus L^*.$ We thus have $\volh (S^3 \setminus L) = \volh (M \setminus L^*) \geq \volt (M)$. 

\begin{example}
The 3--torus $T^3$ is given by $0$-surgery on all components of the Borromean rings $B$ (a hyperbolic link). Thus there is a hyperbolic link $B^* \subset T^3$ with the same complement as $B$ in $S^3.$ In particular, we have $\volt(T^3)\leq \volh(B)<7.33$. The topological volume of $T^3$ (and other product manifolds) is further studied in~\cite{volt2}. In particular, it is conjectured that $\volt(T^3)$ is realised by the Borromean rings and that there is no minimiser with fewer components. Thus $T^3$ is a candidate to have $h(T^3)>1$.
\end{example}

The following theorem expands on this idea to give a general upper bound.

\begin{reptheorem}{thm:surgery_bound}
    Let $M$ be a closed, orientable 3--manifold given by surgery along a nontrivial $n$-component link $L\subset S^3$ with crossing number $c=c(L)$. Then 
    \begin{equation*}
        \volt(M)\leq \big(9c+15n-20\big)v_{oct}+4v_{tet}
    \end{equation*}
    where $v_{oct}\approx3.6638$ and $v_{tet}\approx1.01494$ are the volumes of a regular hyperbolic ideal octahedron and a regular hyperbolic ideal tetrahedron respectively.
\end{reptheorem}

\begin{proof}
	The link $L$ may not be hyperbolic. We first show how to construct a hyperbolic link $\bar L$ in $S^3$, so that the complement of $L$ is a Dehn filling of the complement of $\bar L$. In other words, the complement of $\bar L$ is the complement of a hyperbolic link in $M$, so by computing an upper bound for the volume of $\bar L$ we obtain an upper bound on the topological volume of $M$. Note that in general $L$ will not be a sublink of $\bar L$.
	
	To construct $\bar L$ we follow an algorithm due to Adams~\cite{Adams_Augmented}. In that process we keep track of the new crossing number. Following this algorithm more carefully in specific examples will usually improve the stated bound in the theorem. We now describe the algorithm. 
	
	\begin{enumerate}
		\item We start with a diagram $D$ of $L$ that has the minimal number of crossings $c=c(L)$. First, we introduce a circle (an augmentation) around each twist region of the link diagram. This introduces $4$ new crossings for every twist region. Since the number of twist regions is bounded from above by the number of crossings, we obtain a link diagram $D_1$, with at most $5c$ crossings, consisting of the old diagram $D$ and augmentations $C_1$ around each twist region of $D$.
		
		\item We remove or change crossings in the twist regions of $D$ until we get a diagram $D_2$, consisting of a diagram of an alternating link $A_2$ together with circles $C_2=C_1$ around each twist region of $A_2$. This does not increase the number of crossings and thus $D_2$ has at most $5c$ crossings. Note that here $A_2$ is no longer isotopic to our original link $L$. However by cutting along disks bounded by the components of $C_1$ and twisting, we see that the complement of $L$ arises as a Dehn filling of the complement of $D_2$, which is all we need.
		
		\item We add unknotted components to $A_2$ (that introduce no new crossings with the augmentations) to transform it into a non-split alternating diagram. This will add at most $4(n-1)$ additional crossings to the link diagram $D_2$ and at most $2(n-1)$ twist regions to $A_2$, and produce a link diagram $D_3$ with at most $5c+4n-4$ crossings that consists of a non-split alternating diagram $A_3$ together with some augmentations $C_3=C_2$.
		
		\item For each reducing sphere of the diagram $A_3$, we add one unknotted component to the diagram to get a non-split, prime, alternating diagram $A_4$. This can be done in such a way that the new unknotted components have no crossings with the augmentation circles. The diagram $A_3$ has at most $c+4n-4$ crossings and thus it contains at most $(c+4n-4)/3$ reducing spheres. The unknotted components can be added in such a way that we introduce for each unknotted component $4$ crossings and two additional twist regions. Thus we get a diagram $D_4$ consisting of a non-split, prime, alternating diagram $A_4$ together with augmentations $C_4$ around all but at most $(2c+14n-14)/3$ twist regions. The diagram $D_4$ has at most $(19c+28n-28)/3$ crossings.
		
		\item We add augmentations around the at most $(2c+14n-14)/3$ twists regions of $A_4$ that are not already augmented. This yields a diagram $D_5$ of a link $\bar L$ that is augmented alternating and has at most $9c+15n-15$ crossings.
		
	\end{enumerate}
	The construction ensures that the link $\bar L$ is augmented alternating. It is thus hyperbolic by a result of Adams~\cite{Adams_Augmented}. Hence    \begin{equation*}
		\volt(M)\leq\volh(\bar L).
	\end{equation*}
	The claimed inequality in the theorem statement follows from another result of Adams~\cite{Adams_Bound}, based on a construction by Weeks~\cite[Section 3]{handbookknottheory}, stating that for every hyperbolic link $J$ in $S^3$ with more than $4$ crossings, the volume is bounded from above by $(c(J)-5)v_{oct}+4v_{tet}$.
\end{proof}

\begin{remark}
For a specific given surgery presentation, the bound above can be greatly improved. For example, in the case that a closed, orientable 3--manifold $M$ is described as surgery on a knot $K$, the main result of \cite{BAR-altvolume} implies that $\volt(M)\leq 10(5c(K)-1)v_{tet}$. The key ingredient of \cite{BAR-altvolume} is a result of \cite{Blair-altaug}, showing that the complement of every knot $K\subset S^3$ contains an unknot $U$ so that the link $K\sqcup U$ is hyperbolic. In \cite{BAR-altvolume} it is shown that the hyperbolic volume of the link $K\sqcup U$ is bounded above by $10(5c(K)-1)v_{tet}$. Since $S^3\smallsetminus (K\sqcup U)$ is naturally a submanifold of $M$, our claim follows.
\end{remark}

The above bound can be applied to the standard surgery diagram of a Seifert fibered space. However, in this specific situation the above strategy can be improved as follows.

\begin{proposition}\label{cor:SFS_bound}
    Let $SFS(g;r_1,\ldots,r_N)$ be the Seifert fibered space over a closed surface of genus $g$ with Seifert invariants $(r_1,\ldots,r_N)$. Then 
    \begin{equation*}
        \volt(SFS(g;r_1,\ldots,r_N))\leq \begin{cases}
        	v_{oct}<3.67 \, \textrm{ if $(g,N)=(0,0)$ or $(0,1)$}\\
(10g+6N-9)v_{oct}+4v_{tet}< 37g+22N-28 \,\textrm{ otherwise.}
        \end{cases}
    \end{equation*}
\end{proposition}

\begin{proof}
Consider the standard surgery diagram of the Seifert fibered space $SFS(g;r_1,\ldots,r_N)$ shown in~\cref{fig:SFS}, see for example Section 6.1 in \cite{GS}. The surgery link is a connected sum of $g$ copies of the Borromean rings and $N$ copies of the Hopf link. Add $N+g-1$ many further unknots to that surgery diagram as indicated in \cref{fig:SFS} to obtain a connected prime alternating diagram of a link $L$ that contains the surgery link of $SFS(g;r_1,\ldots,r_N)$ as a sublink. Since $L$ also contains the Borromean rings as a sublink, it is not a torus link. Now a result of Menasco~\cite{Menasco84} implies that $L$ is hyperbolic.
	
	Since $SFS(g;r_1,\ldots,r_N)$ is obtained by surgery on $L$, the volume of $L$ is an upper bound on the topological volume of $SFS(g;r_1,\ldots,r_N)$. From~\cref{fig:SFS} we see that $L$ has crossing number $10g+6N-4$ and thus the result of Adams~\cite{Adams_Bound} mentioned above implies the stated bound.
\end{proof}

\begin{figure}[htbp] 
	\centering
	\def\svgwidth{\columnwidth}
	\begingroup%
  \makeatletter%
  \providecommand\color[2][]{%
    \errmessage{(Inkscape) Color is used for the text in Inkscape, but the package 'color.sty' is not loaded}%
    \renewcommand\color[2][]{}%
  }%
  \providecommand\transparent[1]{%
    \errmessage{(Inkscape) Transparency is used (non-zero) for the text in Inkscape, but the package 'transparent.sty' is not loaded}%
    \renewcommand\transparent[1]{}%
  }%
  \providecommand\rotatebox[2]{#2}%
  \newcommand*\fsize{\dimexpr\f@size pt\relax}%
  \newcommand*\lineheight[1]{\fontsize{\fsize}{#1\fsize}\selectfont}%
  \ifx\svgwidth\undefined%
    \setlength{\unitlength}{532.72735392bp}%
    \ifx\svgscale\undefined%
      \relax%
    \else%
      \setlength{\unitlength}{\unitlength * \real{\svgscale}}%
    \fi%
  \else%
    \setlength{\unitlength}{\svgwidth}%
  \fi%
  \global\let\svgwidth\undefined%
  \global\let\svgscale\undefined%
  \makeatother%
  \begin{picture}(1,0.3030013)%
    \lineheight{1}%
    \setlength\tabcolsep{0pt}%
    \put(0,0){\includegraphics[width=\unitlength,page=1]{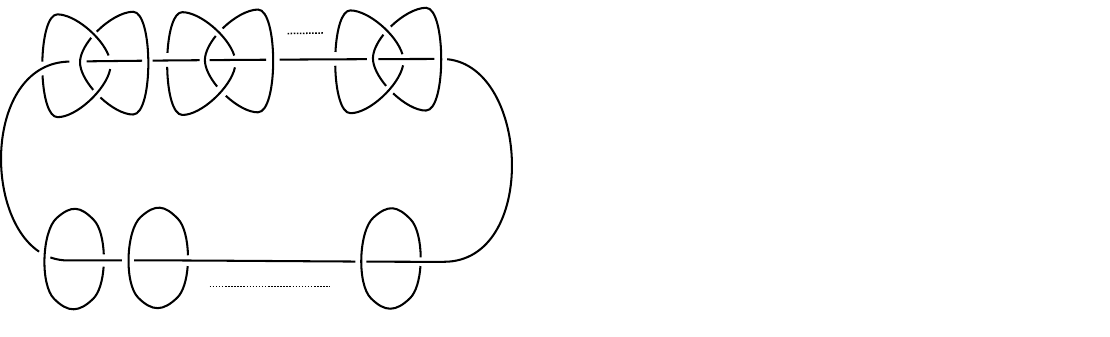}}%
    \put(0.46684641,0.15399914){\color[rgb]{0,0,0}\makebox(0,0)[lt]{\lineheight{1.25}\smash{\begin{tabular}[t]{l}$0$\end{tabular}}}}%
    \put(0.37604205,0.18033997){\color[rgb]{0,0,0}\makebox(0,0)[lt]{\lineheight{1.25}\smash{\begin{tabular}[t]{l}$0$\end{tabular}}}}%
    \put(0.22612052,0.17981216){\color[rgb]{0,0,0}\makebox(0,0)[lt]{\lineheight{1.25}\smash{\begin{tabular}[t]{l}$0$\end{tabular}}}}%
    \put(0.1598761,0.17875652){\color[rgb]{0,0,0}\makebox(0,0)[lt]{\lineheight{1.25}\smash{\begin{tabular}[t]{l}$0$\end{tabular}}}}%
    \put(0.11006829,0.17770089){\color[rgb]{0,0,0}\makebox(0,0)[lt]{\lineheight{1.25}\smash{\begin{tabular}[t]{l}$0$\end{tabular}}}}%
    \put(0.04183159,0.17717307){\color[rgb]{0,0,0}\makebox(0,0)[lt]{\lineheight{1.25}\smash{\begin{tabular}[t]{l}$0$\end{tabular}}}}%
    \put(0.05378548,0.00493048){\color[rgb]{0,0,0}\makebox(0,0)[lt]{\lineheight{1.25}\smash{\begin{tabular}[t]{l}$r_1$\end{tabular}}}}%
    \put(0.12941184,0.00334703){\color[rgb]{0,0,0}\makebox(0,0)[lt]{\lineheight{1.25}\smash{\begin{tabular}[t]{l}$r_2$\end{tabular}}}}%
    \put(0.26837566,0.28097825){\color[rgb]{0,0,0}\makebox(0,0)[lt]{\lineheight{1.25}\smash{\begin{tabular}[t]{l}$g$\end{tabular}}}}%
    \put(0.8095995,0.28742205){\color[rgb]{0,0,0}\makebox(0,0)[lt]{\lineheight{1.25}\smash{\begin{tabular}[t]{l}$g$\end{tabular}}}}%
    \put(0.2285294,0.01918151){\color[rgb]{0,0,0}\makebox(0,0)[lt]{\lineheight{1.25}\smash{\begin{tabular}[t]{l}$N$\end{tabular}}}}%
    \put(0.33810661,0.0054583){\color[rgb]{0,0,0}\makebox(0,0)[lt]{\lineheight{1.25}\smash{\begin{tabular}[t]{l}$r_N$\end{tabular}}}}%
    \put(0.31129189,0.17928434){\color[rgb]{0,0,0}\makebox(0,0)[lt]{\lineheight{1.25}\smash{\begin{tabular}[t]{l}$0$\end{tabular}}}}%
    \put(0,0){\includegraphics[width=\unitlength,page=2]{SFS.pdf}}%
    \put(0.76631286,0.02527104){\color[rgb]{0,0,0}\makebox(0,0)[lt]{\lineheight{1.25}\smash{\begin{tabular}[t]{l}$N$\end{tabular}}}}%
    \put(0,0){\includegraphics[width=\unitlength,page=3]{SFS.pdf}}%
  \end{picture}%
\endgroup%
	\caption{Left: The standard surgery diagram of a Seifert fibered space with genus $g$ and $N$ exceptional fibres. The surgery link is a connected sum of $g$ copies of the Borromean rings and $N$ copies of the Hopf link. Right: A hyperbolic link that contains the standard surgery diagram of a Seifert fibered space as a sublink.}
	\label{fig:SFS}
\end{figure}	

Although the bound from \cref{cor:SFS_bound} improves the general bound from \cref{thm:surgery_bound}, it is far from optimal. For example,  using SnapPy \cite{snappy} to compute the volume of $L$ for $g=2$ and $N=3$ gives a bound slightly smaller than $70$, while the upper bound from \Cref{cor:SFS_bound} is $112$.

\begin{proposition}
Let $M$ be a closed, orientable 3--manifold. Denote the Heegaard genus of $M$ by $g(M).$
There does not exist a function $f:~\mathbb{N} \to \mathbb{N}$ such that all 3--manifolds $M$ satisfy
        \begin{equation*}
            \volt(M)\leq f(g(M)).
        \end{equation*}
\end{proposition}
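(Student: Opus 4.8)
The plan is to exhibit, for one fixed value of the genus, an infinite family of closed hyperbolic 3--manifolds of uniformly bounded Heegaard genus whose volumes diverge. Since $\volt(M)=\volh(M)$ for hyperbolic $M$, such a family contradicts the existence of any bound $\volt(M)\le f(g(M))$. Before constructing it, I would point out that the naive route via the finiteness result \Cref{pro:finiteness} cannot work: although each real number is the topological volume of only finitely many manifolds, this does not prevent infinitely many manifolds of bounded genus from having \emph{distinct} topological volumes accumulating below a fixed bound. Indeed this is exactly what happens for the lens spaces by \Cref{prop:lens-space-limit}, whose minimisers are fillings of $W$ or $P$ and hence have volume bounded by $\volh(W)$. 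So one must genuinely produce manifolds of large volume, not merely infinitely many of them.

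The family I would use is surface bundles over the circle with a fixed fibre genus. Fix a closed orientable surface $\Sigma$ of genus $h\ge 2$ and a pseudo-Anosov homeomorphism $\phi\co\Sigma\to\Sigma$, and let $M_\phi$ be its mapping torus. By Thurston's hyperbolisation theorem for fibred 3--manifolds, $M_\phi$ is hyperbolic, so $\volh(M_\phi)>0$. For every $n\ge 1$ the power $\phi^n$ is again pseudo-Anosov, so its mapping torus $M_{\phi^n}$ is likewise hyperbolic.

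The key observation is that $M_{\phi^n}$ is precisely the connected $n$--fold cyclic cover of $M_\phi$ associated to the fibration: pulling back the bundle projection $M_\phi\to S^1$ along the $n$--fold cover $S^1\to S^1$ unwraps the monodromy to $\phi^n$. Because hyperbolic volume is multiplicative under finite covers, $\volh(M_{\phi^n})=n\cdot\volh(M_\phi)$, which tends to infinity with $n$. As each $M_{\phi^n}$ is hyperbolic, $\volt(M_{\phi^n})=\volh(M_{\phi^n})=n\,\volh(M_\phi)\to\infty$.

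Finally I would invoke the classical fact that a surface bundle over $S^1$ with fibre of genus $h$ has Heegaard genus bounded above by a constant depending only on $h$ (one may take $2h+1$), with the bound independent of the monodromy. Hence $g(M_{\phi^n})\le 2h+1$ for all $n$, so if a bounding function $f$ existed we would obtain $\volt(M_{\phi^n})\le f(2h+1)$ for every $n$, contradicting $\volt(M_{\phi^n})\to\infty$. Essentially everything here is elementary given the results quoted in the paper; the one genuinely external input, and the step I would be most careful to get right, is the monodromy--independent Heegaard genus bound for surface bundles, together with the verification that powers of a pseudo-Anosov produce cyclic covers of the mapping torus, so that multiplicativity of volume may be applied.
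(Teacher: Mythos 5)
Your proof is correct, but it takes a genuinely different route from the paper. You use cyclic covers of a fixed pseudo-Anosov mapping torus: since $M_{\phi^n}$ is the $n$--fold cyclic cover of $M_\phi$ dual to the fibre, multiplicativity of hyperbolic volume under finite covers gives $\volh(M_{\phi^n})=n\cdot\volh(M_\phi)\to\infty$, while the standard handle decomposition of a surface bundle (one $0$--handle, $2h+1$ $1$--handles, $2h+1$ $2$--handles, one $3$--handle) bounds the Heegaard genus by $2h+1$ independently of the monodromy; all of these ingredients check out, and your preliminary remark that the finiteness result of \Cref{pro:finiteness} alone cannot settle the question is a correct and worthwhile observation. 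The paper instead cites the existence of Berge knots of arbitrarily large volume \cite{Baker_Berge_knots}: these have tunnel number one, so their exteriors have Heegaard genus $2$, and since Dehn filling does not increase Heegaard genus, generic (hyperbolic, large-volume) fillings yield closed hyperbolic manifolds of Heegaard genus at most $2$ with unbounded volume. The trade-off is that the paper's argument pins the phenomenon down at the smallest possible genus ($2$, since genus $\le 1$ manifolds are never hyperbolic) at the cost of invoking a rather specific external result, whereas your construction rests only on classical inputs (existence of pseudo-Anosov maps, Thurston's hyperbolisation of fibred manifolds, multiplicativity of volume under covers) and gives explicit linear volume growth, but only at genus $2h+1\ge 5$.
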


\begin{proof}
It suffices to demonstrate the existence of an infinite family of hyperbolic manifolds with unbounded volumes but bounded Heegaard genus. Indeed, there exist Berge knots with arbitrarily large volume~\cite[Theorem~4.1]{Baker_Berge_knots}. On the other hand, Berge knots have tunnel number one~\cite{Berge} and thus the knot exterior of a Berge knot has Heegaard genus $2$. Since Dehn surgery cannot increase the Heegaard genus, we obtain an infinite family of closed hyperbolic manifolds of Heegaard genus $2$ but arbitrarily large volume. 
\end{proof}

\begin{proposition}
    For a closed, orientable 3--manifold $M$, let $t(M)$ be the triangulation complexity of $M$, i.e. the minimal number of tetrahedra needed to triangulate $M$. 
 There exists an increasing function $f\colon \mathbb{N} \to \mathbb{N}$ such that for each $M$, we have
        \begin{equation*}
            \volt(M)\leq f(t(M)).
        \end{equation*}
\end{proposition}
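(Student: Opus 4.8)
The plan is to exploit the fact that triangulation complexity is a \emph{proper} function in the following sense: for each fixed $n$, only finitely many homeomorphism types of closed, orientable $3$--manifolds admit a triangulation with at most $n$ tetrahedra. Granting this finiteness, I would produce the desired $f$ by taking a running maximum of $\volt$ over the (finite) set of manifolds of bounded complexity. No analytic volume estimate is needed; the function $f$ is obtained non-constructively, in contrast with the explicit bounds of \Cref{thm:surgery_bound} and \Cref{cor:SFS_bound}.

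First I would establish the finiteness statement. A triangulation of $M$ by $n$ tetrahedra is specified by a finite amount of combinatorial data: a partition of the $4n$ triangular faces into $2n$ matched pairs, together with, for each matched pair, a choice of simplicial identification of the two faces (a bijection of their three vertices). The number of such face-pairing schemes is at most $(4n-1)!!\cdot 6^{2n}$, hence finite, and each scheme determines at most one homeomorphism type of the underlying space. Therefore there are only finitely many closed, orientable $3$--manifolds $M$ with $t(M)\le n$. This count is insensitive to whether $t$ is defined via genuine simplicial complexes or via the more permissive singular/pseudo-triangulations, so the argument applies under either convention.

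Next I would invoke the fact, recorded after the definition of $\volt$ in \Cref{sec:definition}, that $\volt(M)$ is finite for every closed, orientable $3$--manifold $M$ (by Myers' theorem, each such $M$ contains a hyperbolic knot). Hence, setting
\[
f(n) = n + \Big\lceil \max\big(\{0\}\cup\{\,\volt(M)\mid M \text{ closed orientable},\ t(M)\le n\,\}\big)\Big\rceil,
\]
the maximum ranges over a finite set of finite reals and is therefore finite, so $f(n)\in\N$. The inner maximum is manifestly non-decreasing in $n$, since enlarging $n$ only enlarges the indexing set, and the additive term $+n$ makes $f$ strictly increasing. Finally, for any closed, orientable $M$ the manifold $M$ itself lies in the set defining the maximum at $n=t(M)$, so $f(t(M))\ge \volt(M)$, as required.

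The only real content is the finiteness lemma, and the main point to be careful about is precisely the bookkeeping there: confirming that the combinatorial gluing data is genuinely finite in number, that each manifold of complexity at most $n$ is realised by some such gluing (so that the maximum is taken over an honest, eventually nonempty set), and that the argument is robust to the choice of triangulation convention. I expect this to be the only subtlety; there is no estimate to grind through.
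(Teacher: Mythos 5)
Your proof is correct, but it takes a genuinely different route from the paper's. The paper argues constructively: it invokes the explicit construction in Myers' work, which, given a triangulation of $M$ with $t$ tetrahedra, produces a hyperbolic knot $K \subset M$ together with an ideal triangulation of $M \setminus K$ whose number of ideal tetrahedra is bounded by a function of $t$ alone; since each ideal hyperbolic tetrahedron has volume at most $v_{tet}$, this immediately bounds $\volt(M)$ and yields an in-principle explicit, computable $f$. You instead prove only the existence of $f$, via the properness of triangulation complexity (finitely many face-pairing schemes on $n$ tetrahedra, hence finitely many homeomorphism types with $t(M) \le n$) combined with the finiteness of $\volt(M)$ for each individual $M$, which the paper has already recorded via Myers' theorem. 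Your finiteness lemma is standard and your bookkeeping (handling the empty set, forcing strict monotonicity with the additive $+n$) is sound, so the argument is complete for the statement as literally claimed. The trade-off is that your $f$ is defined non-constructively as a running maximum of $\volt$ over manifolds of bounded complexity and carries no effective content, whereas the paper's approach is of a piece with its explicit bounds elsewhere (\Cref{thm:surgery_bound}, \Cref{cor:SFS_bound}) and could in principle be turned into a concrete formula for $f$. Since the proposition asserts only existence of an increasing $f$, both proofs are acceptable; yours is the more elementary, the paper's the more informative.
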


\begin{proof}
This essentially follows from~\cite{My82}. For every triangulation of $M$, an explicit hyperbolic knot $K \subset M$ is constructed. From this construction we obtain an upper bound on the number of ideal tetrahedra needed to ideally triangulate the complement of $K$ in $M$. This bound only depends on the number of tetrahedra used to triangulate $M$, and not on the topology of $M$ or $M \setminus K$. The upper bound on the number of ideal tetrahedra gives an upper bound on the topological volume of $M$.
\end{proof}

\begin{remark}
	On the other hand, there is no function $f\colon \mathbb{N} \to \mathbb{N}$ that tends to infinity, such that 
	$$\volt(M) > f( t(M) ).$$
	This can be seen for example as follows. From \Cref{prop:lens-space-limit} we get that there exist infinitely many manifolds $M_n$ (lens spaces) with volt bounded by the volume of the Whitehead link. But since there are only finitely many manifolds with a given tetrahedral complexity $t$, it follows that $t(M_n)$ is unbounded. Thus such an inequality cannot exist. 
\end{remark}

\subsection{Lower bounds}

In \Cref{cor:topinv} (2) we have shown that a sequence of manifolds with unbounded rank of first homology has unbounded topological volume. In the next theorem, we develop a lower bound which quantifies this.

\begin{reptheorem}{thm:lower_bounds}
    Let $M$ be a closed, orientable 3--manifold and $p$ be a prime. Then
    \begin{equation*}
        \frac{\operatorname{rk}(H_1(M;\Z_p))}{168.602}< \volt(M).
    \end{equation*}
    Moreover, we have $\volt(M)>3.08,$ $3.69$, $3.77$ for $\operatorname{rk}(H_1(M;\Z_2))>5$, $7$, $10$, respectively.
\end{reptheorem}

\begin{proof}
   A result by Guzman--Shalen~\cite{Guzman_Shalen} states that for every finite-volume, orientable, hyperbolic 3-manifold $N$, and every prime $p$, we have $\operatorname{rk}(H_1(N;\Z_p)) < 168.602 \cdot  \volh(N)$. Now let $L \subset M$ be a volume minimiser of $M$, and set $N=M\setminus L$. Since the rank of the first homology cannot increase under filling, the first inequality follows, i.e.
   \begin{equation*}
       \frac{\operatorname{rk}(H_1(M;\Z_p))}{168.602}\leq\frac{\operatorname{rk}(H_1(M\setminus L;\Z_p))}{168.602}< \volh(M\setminus L)= \volt(M).
   \end{equation*}
   For the other inequalities we use the main results of~\cite{Culler_Shalen2011,Culler_Shalen2012,Guzman_Shalen2020} stating that for every \textit{closed}, finite-volume, orientable, hyperbolic $3$--manifold $N$ with $\operatorname{rk}(H_1(N;\Z_2))>5$, $7$ or $10$ we have $\volh(N)>3.08$, $3.69$ or $3.77$ respectively. Let $L$ be a volume minimiser for $M$. For every cusp $c_i$ of $M\setminus L$ we choose a sequence of slopes $p^{c_i}_n/q^{c_i}_n$ whose lengths converge to $\infty$ and such that filling $M\setminus L$ with these slopes yield manifolds $N_n$ with $\operatorname{rk}(H_1(N_n;\Z_2))=\operatorname{rk}(H_1(M;\Z_2))$. Note that the latter is satisfied if all $p^{c_i}_n$ are even. Since the lengths of the slopes converge to infinity it follows that {\em (a)} the manifolds $N_n$ are hyperbolic for $n$ sufficiently large, and {\em (b)} the volumes of $N_n$ converge to the volume of $M\setminus L$. Thus, applying the results of~\cite{Culler_Shalen2011,Culler_Shalen2012,Guzman_Shalen2020} to the $N_n$ yields the claimed inequalities for $\volt(M)$.
\end{proof}

\begin{example}\label{ex:SFS}
Although our general upper and lower bounds are far from sharp, we claim that their asymptotic behaviour is the same. Let $M$ be the Seifert fibered space of genus $g$ and $n$ singular fibres $SFS(g;p,\ldots, p)$ for a prime $p$. From the standard surgery description (see \cref{fig:SFS}) of $M$ we compute $\operatorname{rk}(H_1(M;\Z_p))=2g+n$ and thus \cref{thm:lower_bounds} and \cref{cor:SFS_bound} yield
    \begin{equation*}
        \frac{2g+n}{168.602} <\volt(M) < 37g+22n-28.
    \end{equation*}
    We observe that the quotient of the upper bound divided by the lower bound is bounded.
\end{example}

\section{The minimal topological volume manifolds}\label{sec:computations}

This section classifies all non-hyperbolic $3$--manifolds with topological volume less than $3.07$, giving their volumes and their volume minimising hyperbolic links. A number of observations from this census are subsequently made.

\subsection{A census}
\label{sec:census}

\begin{proposition}\label{prop:census}
The complete list of non-hyperbolic closed 3--manifolds of topological volumes at most $3.07$, their explicit volumes and their minimisers are given in \Cref{tab:complete}. In particular, each of these manifolds contains a unique volume minimiser. In all cases the volume minimiser is a knot.
\end{proposition}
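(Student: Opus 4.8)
The plan is to reduce the classification to a finite, closed computation resting on three inputs already invoked in this paper: the census of low-volume cusped hyperbolic $3$--manifolds via SnapPy \cite{snappy,GHMTY21}, Dunfield's classification of exceptional Dehn fillings \cite{Du20}, and J\o{}rgensen--Thurston theory. First I would record the basic reduction: if $M$ is closed, orientable and non-hyperbolic with $\volt(M)\le 3.07$, then a volume minimiser $L$ is a \emph{nonempty} hyperbolic link whose complement $N=M\setminus L$ is a cusped hyperbolic $3$--manifold with $\volh(N)=\volt(M)\le 3.07$. Since the minimal volume of an orientable cusped hyperbolic $3$--manifold with at least two cusps is $v_{oct}\approx 3.6638>3.07$ (realised by the Whitehead link and its sister), every such $N$ has exactly one cusp. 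Hence $L$ is a knot, which already settles the final sentence of the proposition.

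Next I would enumerate the finitely many one-cusped hyperbolic $3$--manifolds $N_1,\dots,N_k$ of volume at most $3.07$, ordered by volume and equipped with rigorously certified volumes; this is precisely the bottom of the SnapPy census. For each $N_i$ I would apply \cite{Du20} to list all Dehn fillings $N_i(r)$ that are closed and non-hyperbolic, and identify the homeomorphism type of each resulting manifold $M$. This produces a finite set of pairs $(N_i,M)$ recording, for every candidate non-hyperbolic $M$, an upper bound $\volt(M)\le\volh(N_i)$, and simultaneously guaranteeing completeness of the list of candidate manifolds.

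The key step is to upgrade these upper bounds to equalities and so certify the minimisers. For a non-hyperbolic $M$ occurring as such a filling I claim
\[
\volt(M)=\min\{\,\volh(N_i)\;\mid\;N_i \text{ fills non-hyperbolically to } M\,\}.
\]
The inequality $\le$ is immediate, since each listed $N_i$ is the complement of a hyperbolic link in $M$. For $\ge$, let $N^*$ be any minimiser complement of $M$; then $\volh(N^*)=\volt(M)\le 3.07$, so $N^*$ is one-cusped of volume at most $3.07$ and therefore appears in the list, and because $M$ is non-hyperbolic the associated filling is exceptional. Thus $N^*$ is among the $N_i$ in the minimum, giving the reverse inequality. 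Since the census is complete up to volume $3.07$, the computation closes up: no complement of smaller volume can have been overlooked. Uniqueness of the minimiser then reduces to checking, for each $M$ in the final table, that exactly one list manifold of minimal volume fills to it; under the convention of this paper (links identified up to diffeomorphism of the complement) this is precisely uniqueness of the minimising link.

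The main obstacle is the rigour and completeness of the underlying enumeration rather than any isolated conceptual difficulty. Concretely, one must (i) work with certified, interval-verified volumes so that comparisons against the $3.07$ threshold and among the $\volh(N_i)$ are provably correct, particularly where volumes cluster; (ii) rely on the completeness of Dunfield's exceptional-filling tables, so that no non-hyperbolic filling is missed; and (iii) correctly recognise the homeomorphism types of the filled manifolds, so that distinct fillings yielding the same $M$ are merged and the minimum above—and hence uniqueness—is evaluated against the correct set. The recognition problem is tractable in this range because every exceptional filling that arises is a small Seifert fibered space, lens space, or connected sum, for which the classification is standard.
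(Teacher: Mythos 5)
Your proposal follows essentially the same route as the paper: reduce to the census of cusped hyperbolic manifolds of volume at most $3.07$, use Dunfield's complete list of exceptional fillings, and certify $\volt(M)$ as the minimum of $\volh(N_i)$ over the census manifolds filling to $M$, with completeness guaranteed because any minimiser complement would itself appear in the census. Two points of comparison. First, you deduce that every minimiser is a knot a priori from Agol's theorem that the smallest volume of a two-cusped orientable hyperbolic manifold is $v_{oct}>3.07$; the paper instead simply observes that all fourteen census manifolds happen to have one cusp. Your version is cleaner and uses a fact the paper already cites elsewhere. Second, and this is the one place your plan falls short of what the paper actually does: you propose interval-certified volumes to handle "clustering", but interval arithmetic can only certify strict inequalities, never exact equalities, and the census contains genuine ties ($\volh(m003)=\volh(m004)$, $\volh(m006)=\volh(m007)$, $\volh(m015)=\volh(m016)=\volh(m017)$, etc.). Resolving whether two numerically indistinguishable volumes are equal or merely close is needed both to run the iterative "keep only new fillings at each volume level" procedure correctly and to justify reporting identical volumes for fillings of different parents in the table; a tie between two distinct parents of the same $M$ would also break uniqueness of the minimiser. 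The paper certifies these equalities exactly, via mutation/monodromy arguments for the once-punctured torus bundles and Groebner-basis computations on the tetrahedron shapes in the appendix. Your argument would still yield the correct table here only because, empirically, no manifold in this range arises from two distinct census parents, but as written it cannot rule that out, so you should add an exact (algebraic or symmetry-based) certification of the volume coincidences.
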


\begin{proof}
From Theorem 1.5 of~\cite{GHMTY21}, cf.~\cite{Mi09}, we know the complete list of cusped hyperbolic 3--manifolds of volume at most $3.07$. These are:
$$\mathcal{V}=\big[m003,m004,m006,m007,m009,m010,m011,m015,m016,m017, m019, m022, m023, m026\big]$$
All of these have exactly one cusp. Hence the non-hyperbolic 3--manifolds of topological volumes at most $3.07$ are precisely the exceptional fillings on these manifolds, and thus there are finitely many.

We claim that the volumes of the manifolds in the above list satisfy:
\begin{align*}
\volh(m003) = \volh(m004) &< \volh(m006) = \volh(m007) \\
&< \volh(m009) = \volh(m010)\\
&< \volh(m011)\\
&< \volh(m015) = \volh(m016)= \volh(m017)\\
&< \volh(m019)\\
&< \volh(m022) = \volh(m023)\\
&< \volh(m026)
\end{align*}
We first note that the above equalities and inequalities are correct if one computes
the volumes with precision \emph{up to two decimal places}. Hence it remains to show that the stated equalities hold. 

We first note that $m003$ and $m004$ are once-punctured torus bundles with monodromies $-RL$ and $RL$, respectively. 
Here $R,L \in \mathrm{SL}(2,\mathbb{Z})$ denote the standard generators
\[
R=\begin{pmatrix}1&1\\0&1\end{pmatrix}, \qquad 
L=\begin{pmatrix}1&0\\1&1\end{pmatrix},
\]
which correspond to right and left Dehn twists about the standard generators of the once-punctured torus. 
Hence they are related by volume-preserving mutation along the fibre and so $\volh(m003)=\volh(m004)$.
Similarly, $m009$ and $m010$ are once-punctured torus bundles with monodromies $RRL$ and $-RRL$, respectively; and $m022$ and $m023$ are once-punctured torus bundles with monodromies $-RRRL$ and $RRRL$, respectively. 
We can verify these homeomorphisms using SnapPy by constructing the once-punctured torus bundles with the given monodromies and checking that they are isometric to the claimed census manifolds.

For $\volh(m006) = \volh(m007)$ and $\volh(m015) = \volh(m016)= \volh(m017)$ we refer to \Cref{app:shapes}, where we use the gluing and completeness equations to show that the manifolds claimed to have the same volume are built from tetrahedra with equal shapes.

In \cite{Du20}, Dunfield provides a complete list of all exceptional fillings of SnapPy \cite{snappy} manifolds with one cusp. This algorithmic search is made feasible by the $6$-theorem of Agol and Lackenby~\cite{agol-bounds,lackenby-6}, and the fact that the low complexity non-hyperbolic closed 3--manifolds that arise can easily be identified.
 
Thus we can determine the topological volume of any exceptional filling along a manifold in $\mathcal V$ as follows. First enumerate all exceptional fillings of the two manifolds of smallest volume. Then enumerate all exceptional fillings of the manifolds with second smallest volume in the list and only keep those non-hyperbolic 3--manifolds that have not yet been found. Then continue this process iteratively. If an exceptional filling is found as fillings of multiple distinct cusped manifolds of the same volume, or by filling one manifold with different surgery coefficients, we report all instances in which it arises. We present the results in \Cref{tab:complete}, where we sort our results by volume. 
\end{proof}

\begin{center}
\begin{longtable}{llll}
\caption{The complete list of non-hyperbolic manifolds of volume less than $3.07$ ordered by their volumes. Here the column \textit{Manifold} contains the Regina name~\cite{Regina} as used by Dunfield \cite{Du20}, where $S2$ $T$, $A$, and $D$ describe the case that the base surface is the $2$-sphere, the $2$-torus, the annulus, and the $2$-disk, respectively. Note that the notation of the Seifert fibered space agrees with the one used in \Cref{cor:SFS_bound} as for example shown in Section 7 of \cite{QA}. The column \textit{Volume} contains a verified approximation of the volume (i.e. at most the last digit is wrong), and the column \textit{Realisations} describes different ways to get the manifold by Dehn filling a SnapPy census manifold. Here and in the rest of this article the slopes in a cusp of a census manifold are measured with respect to the
	\textit{geometric basis} of the cusp given by the two shortest curves. This might in general differ from the Seifert basis if the census manifold is the complement of a knot in $S^3$.
	 \label{tab:complete}} \\
  
    \hline
    Manifold & Volume & Realisations &\\
    \hline
    \hline
    \endfirsthead

    \multicolumn{4}{l}%
    { \tablename\ \thetable{} -- continued from previous page} \\
    \hline
    Manifold & Volume & Realisations &\\
    \endhead

    \hline \multicolumn{4}{r}{{continued on next page --}} \\
    \endfoot

    \hline
    \endlastfoot

		$S^3$ & $2.02988321281931$ & $m004(1, 0)$ & \\
		$L(5,1)$ & $2.02988321281931$ & $m003(-1, 1)$ & $m003(0, 1)$\\
		$L(10,3)$ & $2.02988321281931$ & $m003(1, 0)$ &  \\
		$SFS [S2: (2,1) (3,2) (3,-1)]$& $$2.02988321281931$$ & $m003(-2, 1)$ & $m003(1, 1)$   \\
		$SFS [S2: (2,1) (3,1) (7,-6)]$& $2.02988321281931$ & $m004(-1, 1)$ & $m004(1, 1)$    \\
		$SFS [S2: (2,1) (4,1) (5,-4)]$& $2.02988321281931$ & $m004(-2, 1)$ &  $m004(2, 1)$  \\
		$SFS [S2: (3,1) (3,1) (4,-3)]$& $2.02988321281931$ & $m004(-3, 1)$ & $m004(3, 1)$   \\
		$T \times I / [ -2,-1 | -1,-1 ]$ & $2.02988321281931$ & $m003(-1, 2)$ & \\
		$T \times I / [ 2,1 | 1,1 ]$ & $2.02988321281931$ & $m004(0, 1)$ &  \\
		$SFS [D: (2,1) (2,1)] \cup_{(0,1 | 1,1)} SFS [D: (2,1) (3,2)]$ & $2.02988321281931$ & $m003(1, 2)$ & $m003(-3, 2)$\\
		$SFS [D: (2,1) (2,1)] \cup_{( 0,1 | 1,0)} SFS [D: (2,1) (3,1)]$ & $2.02988321281931$ & $m004(-4, 1)$ & $m004(4, 1)$ \\
		\midrule
		$L(15,4)$ & $2.56897060093671$ & $m006(0, 1)$ & \\
		$L(5,2)$ & $2.56897060093671$ & $m006(1, 0)$ & \\
		$L(3,1)$ & $2.56897060093671$ & $m007(1, 0)$ & \\
		$SFS [S2: (2,1) (3,2) (4,-3)]$ & $2.56897060093671$ & $m006(-1, 1)$ &  \\
		$SFS [S2: (2,1) (2,1) (3,2)]$ & $2.56897060093671$ & $m006(1, 1)$ &  \\
		$SFS [S2: (2,1) (2,1) (5,-2)]$ & $2.56897060093671$ & $m007(1, 1)$ &  \\
		$SFS [S2: (2,1) (3,1) (7,-5)]$& $2.56897060093671$ & $m006(-2, 1)$ &    \\
		$SFS [S2: (3,1) (3,1) (3,-1)]$& $2.56897060093671$ & $m007(0, 1)$ &    \\
		$SFS [S2: (2,1) (4,1) (5,-3)]$& $2.56897060093671$ & $m007(-1, 1)$ &    \\
		$SFS [S2: (2,1) (3,1) (9,-7)]$& $2.56897060093671$ & $m007(-2, 1)$ &    \\
		$SFS [A: (2,1)] / [ 0,-1 | -1,0 ]$ & $2.56897060093671$ & $m006(-3, 1)$ & \\
		$SFS [D: (2,1) (3,1)] \cup_{(-1,1 | 0,1)} SFS [D: (2,1) (3,1)]$ & $2.56897060093671$ & $m006(1, 2)$ & \\
		$SFS [A: (2,1)] / [ 0,1 | 1,0 ]$ & $2.56897060093671$ & $m007(-3, 1)$ & \\
		\midrule
		$L(2,1)$ & $2.66674478344906$ & $m009(1, 0)$ & \\
		$L(6,1)$ & $2.66674478344906$ & $m010(1, 0)$ & \\
		$SFS [S2: (2,1) (2,1) (4,-1)]$ & $2.66674478344906$ & $m010(0, 1)$ &  \\
		$SFS [S2: (2,1) (3,1) (8,-7)]$& $2.66674478344906$ & $m009(0, 1)$ &    \\
		$SFS [S2: (2,1) (4,1) (6,-5)]$& $2.66674478344906$ & $m009(-1, 1)$ &    \\
		$SFS [S2: (3,1) (3,1) (5,-4)]$& $2.66674478344906$ & $m009(-2, 1)$ &    \\
		$SFS [S2: (3,1) (3,2) (3,-1)]$& $2.66674478344906$ & $m010(1, 1)$ &    \\
		$T \times I / [ 3,2 | 1,1 ]$ & $2.66674478344906$ & $m009(1, 1)$ & \\
		$T \times I / [ -3,-2 | -1,-1 ]$ & $2.66674478344906$ & $m010(-2, 1)$ & \\
		$SFS [D: (2,1) (2,1)] \cup_{(0,1 | 1,0)} SFS [D: (2,1) (4,1)]$ & $2.66674478344906$ & $m009(-3, 1)$ & \\
		$SFS [D: (2,1) (2,1)] \cup_{(-1,2 | 0,1)} SFS [D: (2,1) (3,1)]$ & $2.66674478344906$ & $m009(3, 1)$ & \\
		$SFS [D: (2,1) (2,1)] \cup_{(0,1 | 1,1)} SFS [D: (2,1) (4,3)]$ & $2.66674478344906$ & $m010(2, 1)$ & \\
		$L(2,1) \# L(3,1)$ & $2.66674478344906$ &  $m010(-1, 1)$&\\
				\midrule
		$L(9,2)$ & $2.78183391239608$ & $m011(0, 1)$ & \\
		$L(13,3)$ & $2.78183391239608$ & $m011(1, 0)$ & \\
		$SFS [S2: (2,1) (3,2) (4,-1)]$ & $2.78183391239608$ & $m011(-1, 1)$ &  \\
		$SFS [S2: (2,1) (2,1) (3,-2)]$ & $2.78183391239608$ & $m011(1, 1)$ &  \\
		$SFS [S2: (2,1) (3,2) (5,-3)]$ & $2.78183391239608$ & $m011(2, 1)$ &  \\
		$SFS [D: (2,1) (3,1)] \cup_{(0,1 | 1,1)} SFS [D: (2,1) (3,2)]$ & $2.78183391239608$ & $m011(-1, 2)$ & \\
		\midrule
		$L(19,7)$ & $2.82812208833078$ & $m016(-1, 1)$ & \\
		$L(18,5)$ & $2.82812208833078$ & $m016(0, 1)$ & \\
		$L(21,8)$ & $2.82812208833078$ & $m017(-1, 1)$ & \\
		$L(14,3)$ & $2.82812208833078$ & $m017(0, 1)$ & \\
		$L(7,2)$ & $2.82812208833078$ & $m017(1, 0)$ & \\
		$SFS [S2: (2,1) (4,1) (7,-5)]$& $2.82812208833078$ & $m015(0, 1)$ &    \\
		$SFS [S2: (3,1) (3,1) (5,-3)]$& $2.82812208833078$ & $m015(1, 1)$ &    \\
		$SFS [S2: (2,1) (3,1) (11,-9)]$& $2.82812208833078$ & $m015(-1, 1)$ &    \\
		$SFS [D: (2,1) (2,1)] \cup_{(0,1 | 1,0)} SFS [D: (2,1) (3,2)]$ & $2.82812208833078$ & $m015(2, 1)$ & \\
		$SFS [A: (2,1)] / [ 0,1 | 1,-1 ]$ & $2.82812208833078$& $m015(-2, 1)$ & \\
		$SFS [D: (2,1) (2,1)] \cup_{(1,1 | 0,1)} SFS [D: (2,1) (3,1)]$ & $2.82812208833078$ & $m016(2, 1)$ & \\
		$SFS [D: (2,1) (3,1)] \cup_{(-1,1 | 0,1)} SFS [D: (2,1) (3,2)]$ & $2.82812208833078$ & $m016(-1, 2)$ & \\
		$SFS [D: (2,1) (2,1)] \cup_{(0,1 | 1,1)} SFS [D: (2,1) (3,1)]$ & $2.82812208833078$ & $m017(-2, 1)$ & \\
		$SFS [A: (2,1)] / ( 0,-1 | -1,1 )$ & $2.82812208833078$ & $m017(2, 1)$ & \\
		\midrule
		$L(17,5)$ & $ 2.944106486676$ & $m019(0, 1)$ & \\
		$L(11,3)$ & $ 2.944106486676$ & $m019(1, 1)$ & \\
		$SFS [S2: (2,1) (3,2) (5,-2)]	$ & $2.944106486676$ & $m019(-1, 1)$ &  \\
		$SFS [D: (2,1) (3,2)] \cup_{(0,1 | 1,1)} SFS [D: (2,1) (3,2)] ]	$ & $2.944106486676$ & $m019(-2, 1)$ &  \\
			\midrule
		$L(7,1)$ & $2.98912028293$ & $m022(1, 0)$ & \\
		$SFS [S2: (2,1) (3,1) (4,-1)]	$ & $2.98912028293$ & $m022(0, 1)$ &  \\
		$SFS [S2: (3,2) (3,2) (4,-3)]	$ & $2.98912028293$ & $m022(-1, 1)$ &  \\
		$SFS [S2: (2,1) (3,1) (9,-8)]	$ & $2.98912028293$ & $m023(0, 1)$ &  \\
		$SFS [S2: (2,1) (4,1) (7,-6)]	$ & $2.98912028293$ & $m023(1, 1)$ &  \\
		$SFS [S2: (3,1) (3,1) (6,-5)]	$ & $2.98912028293$ & $m023(2, 1)$ &  \\
		$T x I / [ -4,-3 | -1,-1 ]	$ & $2.98912028293$ & $m022(2, 1)$ &  \\
		$T x I / [ 4,3 | 1,1 ]	$ & $2.98912028293$ & $m023(-1, 1)$ &  \\
		$SFS [D: (2,1) (2,1)] \cup_{(0,1 | 1,1)} SFS [D: (2,1) (5,4)] ]	$ & $2.98912028293$ & $m022(-2, 1)	$ &  \\
		$SFS [D: (2,1) (2,1)] \cup_{(0,1 | 1,0)} SFS [D: (2,1) (5,1)]$ & $2.98912028293$ & $m023(3, 1)$ &  \\
			\midrule
		$L(19,4)$ & $3.059338057779$ & $m026(0, 1)$ & \\
		$L(8,3)$ & $3.059338057779$ & $m026(1, 0)$ & \\
		$SFS [S2: (2,1) (3,2) (5,-4)]	$ & $3.059338057779$ & $m026(-1, 1)	$ &  \\
		$SFS [S2: (2,1) (3,1) (3,2)]	$ & $3.059338057779$ & $m026(1, 1)$ &  \\
		$SFS [D: (2,1) (3,1)] \cup_{(-1,1 | 0,1)} SFS [D: (2,1) (4,1)]$ & $3.059338057779$ & $m026(1, 2)$ &  \\
		\bottomrule
\end{longtable}
\end{center}

\begin{remark}
Upper bounds on the volume of non-hyperbolic manifolds not appearing in our census can be computed from Dunfield's data~\cite{Du20}. The sharpness on these bounds depends on determining further terms in the volume spectrum of one-cusped hyperbolic 3--manifolds. The data ordered for our setting can be accessed at~\cite{data}.
\end{remark}

\subsection{Parenthood}

Next, we describe all ways in which the fourteen $1$-cusped manifolds in $\mathcal{V}$ of volumes at most $3.07$ can arise as fillings of the two $2$-cusped manifolds of minimal volume~\cite{agol-2cusped}. These are the Whitehead link complement $m129$ and the Whitehead link sister $m125$; compare~\Cref{sec:WandP}.

\begin{proposition}
	\label{prop:isometry}
	Every manifold in $\mathcal{V}$ can be obtained by filling the first cusp of $m125$ or $m129$. The complete list of surgeries on $m125$ and $m129$ that yield a manifold in $\mathcal{V}$ is shown in~\cref{tab:parent}. Because $m125$ has a symmetry that preserves the cusps but acts as $\bigl(\begin{smallmatrix}0 & -1 \\ 1 & 0\end{smallmatrix}\bigr)$ on the geometric basis, we obtain each manifold that arises as filling of $m125$ in two ways. Both manifolds admit an isometry that interchanges the components and thus we also have the analogous result for the second cusp.
\end{proposition}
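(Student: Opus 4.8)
The plan is to treat the statement as a finite but rigorous verification resting on two facts: the minimality of $m125$ and $m129$ among two-cusped hyperbolic manifolds~\cite{agol-2cusped}, and an explicit drilling/filling computation carried out for each of the fourteen members of $\mathcal V$. First I would record the relevant volume constraint: since hyperbolic volume strictly decreases under Dehn filling, any one-cusped filling of $m125$ or $m129$ has volume strictly less than $\volh(m125)=\volh(m129)=v_{oct}\approx 3.6638$. In particular every manifold in $\mathcal V$, having volume at most $3.07$, sits in the correct range to arise as such a filling, whereas long filling slopes are excluded because their filled volume tends to $v_{oct}>3.07$.

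The core of the argument is a for-each-manifold computation, dual to the enumeration in \cref{prop:census}. For each $N\in\mathcal V$ I would exhibit a short geodesic $\gamma\subset N$ whose complement $N\setminus\gamma$ is a two-cusped hyperbolic manifold, and then identify $N\setminus\gamma$ with either $m125$ or $m129$ using SnapPy's isometry-verification routines, reading off the slope along which refilling the drilled cusp recovers $N$. Because drilling is inverse to filling, this yields the surgery description $m125(\text{slope},\infty)$ or $m129(\text{slope},\infty)$ appearing in \cref{tab:parent}. To certify completeness—that \emph{every} element of $\mathcal V$ is obtained and that the table lists \emph{all} such surgeries—I would also run the enumeration in the opposite direction, filling one cusp of $m125$ and of $m129$ along every slope whose filled volume does not exceed $3.07$. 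By J\o{}rgensen--Thurston theory the volume of such a filling approaches $v_{oct}$ as the slope length grows, so only finitely many slopes are relevant and the search terminates; matching its output against $\mathcal V$ confirms the table.

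The symmetry statements I would handle separately. Both $m129$ (the Whitehead link complement) and $m125$ (its sister) admit an orientation-preserving isometry exchanging their two cusps: for $m129$ this comes from the evident symmetry of the Whitehead link interchanging its two components, and for $m125$ it can be confirmed directly from its symmetry group in SnapPy. Applying this cusp-swapping isometry shows that filling the first cusp along a slope $s$ is isometric to filling the second cusp along the corresponding slope. This simultaneously explains why a manifold in $\mathcal V$ is realised in two ways and upgrades the statement about the first cusp to the analogous statement for the second.

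The main obstacle is rigour in the identification step: I must rule out that some drilled manifold $N\setminus\gamma$ is a different two-cusped manifold whose invariants merely agree numerically with those of $m125$ or $m129$, and I must ensure the slope enumeration is genuinely exhaustive. Both points are addressed by using verified (interval-arithmetic) isometry checks together with the finiteness coming from the volume estimate above, so that \cref{tab:parent} can be certified rather than only observed numerically. A secondary bookkeeping issue is that distinct parents, distinct slopes, and the cusp-swapping symmetry may all produce the same $N$, so some care is needed to record every surgery description while recognising when two of them yield isometric complements.
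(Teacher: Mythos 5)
Your overall structure matches the paper's: enumerate a finite set of filling slopes on $m125$ and $m129$, identify the resulting one-cusped manifolds, match against $\mathcal{V}$, and use the cusp-exchanging isometries of both manifolds to transfer the result to the second cusp. The paper's actual proof is exactly the ``reverse direction'' of your argument and nothing more; the drilling step you propose (finding a short geodesic in each $N\in\mathcal V$ and identifying its complement) is redundant once the forward enumeration is complete, since completeness of the filling enumeration already certifies both existence and exhaustiveness of the table.

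The genuine gap is in how you certify that the slope enumeration terminates. You appeal to J\o{}rgensen--Thurston theory for the finiteness of the set of slopes whose fillings have volume at most $3.07$. That theory does guarantee finiteness, but it is not effective: it gives you no computable bound on the length of the slopes you must check, so you cannot know when your search is exhaustive. You flag exhaustiveness as your main obstacle and then claim it is ``addressed by \dots the finiteness coming from the volume estimate above,'' but that finiteness statement is precisely the non-constructive one. The paper instead invokes the theorem of Futer--Kalfagianni--Purcell, which gives an explicit lower bound on the volume of a Dehn filling in terms of the (normalized) length of the filling slope; inverting this bound at the threshold $3.07$ yields an explicit, a priori finite list of candidate slopes on each cusp of $m125$ and $m129$, which can then be checked one by one. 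Substituting that effective estimate for your appeal to J\o{}rgensen--Thurston closes the gap; the rest of your argument, including the verified isometry checks and the treatment of the symmetry, is sound.
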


\begin{proof}
	For the proof we use a result of Futer--Kalfagianni--Purcell~\cite{FuterKalfagianniPurcell} giving a bound on the possible length of slopes yielding manifolds of volume at most $3.07$. In~\cite{data} we explicitly compute these finite sets of slopes and identify all filled manifolds. The results are summarised in~\cref{tab:parent}.
\end{proof}

\begin{table}[htbp]
	\caption{The parenthood diagram of the low volume $1$-cusped manifolds. \label{tab:whichone}}
	\label{tab:parent}
	\begin{tabular}{lll}
		Manifold & Surgery on $m129$ &Surgery on $m125$ \\
		\toprule	
		$m003$ & $m129(3, 1)  (0,0)$ & $m125(-2, 1)(0,0)$ and $m125(1, 2)(0,0)$\\ 
		
		$m004$ & $m129(-3, 1) (0,0)$ & none \\  
		
		$m006$ & $m129(1, 2) (0,0)$ & $m125(3, 1)(0,0)$ and $m125(-1, 3)(0,0)$\\  
		
		$m007$ & $m129(-1, 2) (0,0)$ & none \\ 
		
		$m009$ & $m129(-4, 1) (0,0)$ & none \\ 
		
		$m010$ & $m129(4, 1) (0,0)$ & none \\ 
		
		$m011$ & none & $m125(-3, 1) (0,0)$ and $m125(1, 3)(0,0)$\\ 
		
		$m015$ & $m129(-3, 2) (0,0)$ & none\\  
		
		$m016$ & none & $m125(3, 2)(0,0)$ and $m125(-2, 3)(0,0)$\\ 
		
		$m017$ & $m129(3, 2) (0,0)$ & none \\  
		
		$m019$ & none & $m125(-3, 2)(0,0)$ and $m125(2, 3)(0,0)$ \\ 
		
		$m022$ & $m129(5, 1)  (0,0)$ & none \\  
		
		$m023$ & $m129(-5, 1)  (0,0)$ & none \\  
		
		$m026$ &none & $m125(4, 1)(0,0)$ and $m125(-1, 4)(0,0)$\\ 
		\bottomrule
	\end{tabular}
\end{table}

\subsection{Homology classes of minimisers}
\label{sec:Homology classes of minimisers}

\begin{example}
A minimiser does not need to be nullhomologous. Indeed, the unique minimiser for $L(10,3)$ is given by $m003$; see \Cref{tab:complete}. Since $m003(1,0)$ is diffeomorphic to $L(10,3)$, the minimiser $m003$ represents the complement of a knot $K$ in $L(10,3)$. We claim that $K$ is not nullhomologous. For that we observe that the first homology of $m003$ is isomorphic to $\Z\oplus\Z_5$ and thus $K$ must represent an element of order two in $H^1(L(10,3))$. More concretely, we can describe $K$ as the blue knot in the surgery diagram of $L(10,3)$ along the red unknot $U$ shown in \Cref{fig:m003}. From that diagram we compute the linking number of $K$ and $U$ to be $\pm5$ and thus $K$ represents $5$ in $\Z_{10}\cong H_1(L(10,3))$.
\end{example}

\begin{figure}[htb]
	\centering
	\includegraphics[width=5cm]{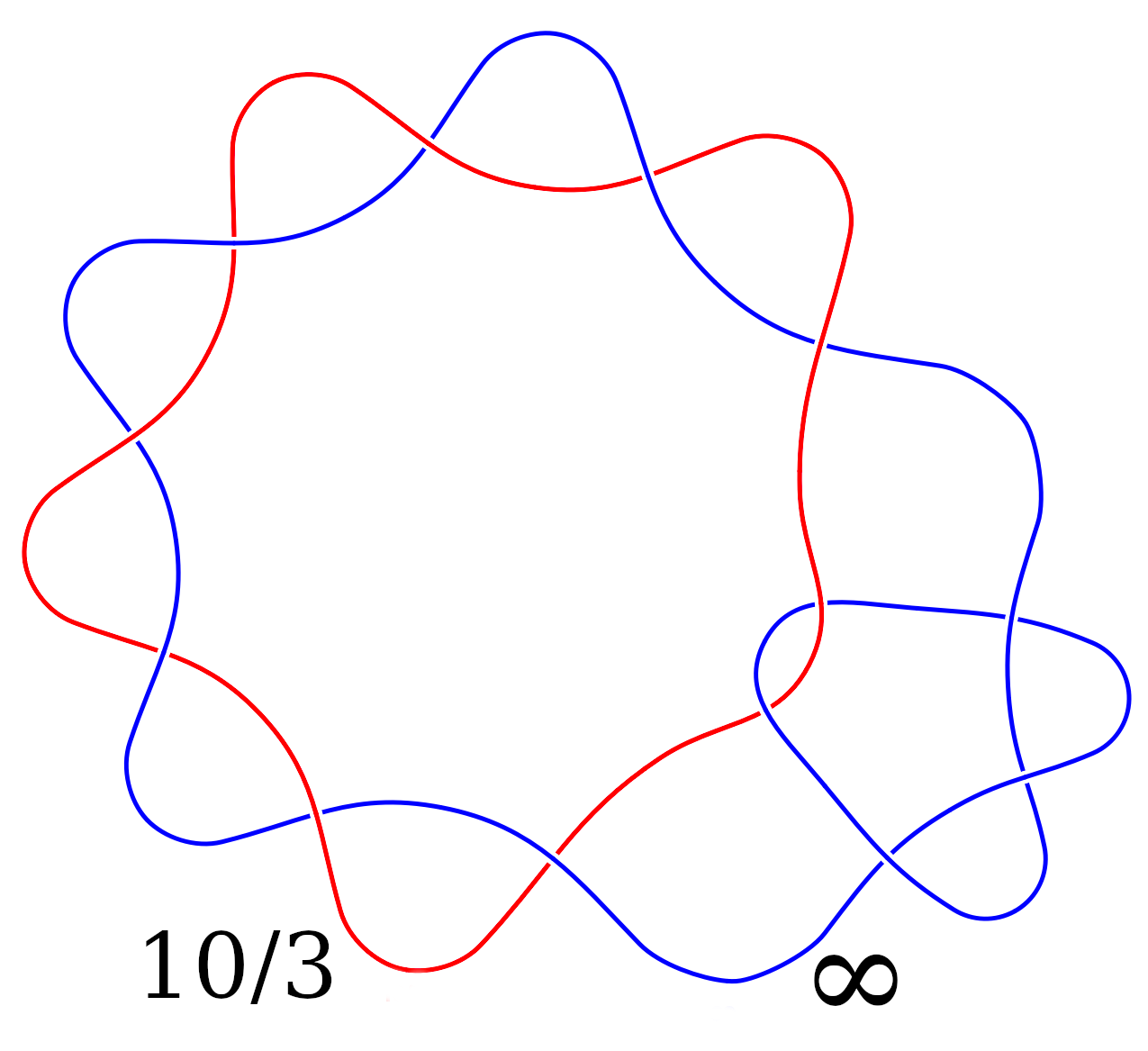}
	\caption{The surgery on the red unknot yields $L(10,3)$. We can use SnapPy to verify that the complement of the blue knot is isometric to $m003$ and thus represents the minimiser of $L(10,3)$.}
	\label{fig:m003}
\end{figure}

In the proof of \Cref{prop:census} we have listed only those knots that minimise the volume. However, among the $1$-cusped manifolds with volume at most $3.07$ there are more exceptional fillings (that do not realise) $\volt$. These are printed in \Cref{tab:non_minimizers}. Since the homologies of some of these knot complements differ from the homologies of the exteriors of the minimisers from \Cref{tab:complete}, we also obtain examples of minimisers in different homology classes.

\begin{table}[htbp]
	\caption{All other hyperbolic knots in non-hyperbolic manifolds that have volume less than $3.07$. None of these knots is a minimiser since a knot of smaller volume is already presented in \Cref{tab:complete}.}
	\label{tab:non_minimizers}
	\begin{tabular}{lll}
		knot & ambient manifold  & realisation\\
		\toprule	
		$m015$ & $S3$  & $m015(1, 0)$ \\
		$m016$ & $S3$  & $m016(1, 0)$ \\
        $m023$ & $L(3,1)$  & $m023(1, 0)$ \\
        $m019$ & $L(6,1)$  & $m019(1, 0)$ \\
		$m022$ & $L(7,2)$  & $m022(1, 1)$ \\		
		$m007$ & $SFS [S2: (2,1) (3,2) (3,-1)]$ &  $m007(2, 1)$ \\
        $m016$ & $SFS [S2: (2,1) (3,2) (5,-3)]$ &  $m016(1, 1)$ \\
		$m009$ & $SFS [S2: (2,1) (4,1) (5,-4)]$  & $m009(2, 1)$ \\
		$m023$ & $SFS [S2: (3,1) (3,1) (4,-3)]$  & $m023(-2, 1)$ \\
		$m026$ & $SFS [S2: (2,1) (3,1) (9,-7)]$  & $m026(-2, 1)$ \\
        $m016$ & $SFS [D: (2,1) (2,1)] \cup_{(0,1 | 1,1)} SFS [D: (2,1) (3,2)]$  & $m016(-2, 1)$ \\
        $m017$ & $SFS [D: (2,1) (3,1)] \cup_{(-1,1 | 0,1)} SFS [D: (2,1) (3,1)]$  & $m017(-1, 2)$ \\
        $m019$ & $SFS [D: (2,1) (2,1)] \cup_{(0,1 | 1,1)} SFS [D: (2,1) (3,1)]$  & $m019(1, 2)$ \\
		\bottomrule
	\end{tabular}
\end{table}

\begin{example}
In \Cref{tab:hom} we display the isomorphism types of the homologies of the knot exteriors from \Cref{tab:non_minimizers} whenever the homology differs from the homology of the minimiser. For example, the minimiser $m010$ of $L(6,1)$ is nullhomologous in $L(6,1)$, while the hyperbolic knot with second smallest volume $m019$ represents a generator in the homology of $L(6,1)$. 
\end{example}

\begin{example}
We also have examples of manifolds where the knot with smallest and the knot with second smallest hyperbolic volumes represent different non-trivial torsion elements in homology. For example, let $M$ be $SFS [S2: (2,1) (3,2) (3,-1)]$. Then the volume of $M$ is realised by $m003$ and the hyperbolic knot with second smallest volume is given by $m007$. Since their homologies are different they represent different non-trivial torsion elements in $H_1(M)\cong\Z_{15}$.
\end{example}

\begin{table}[htbp]
	\caption{Homologies of the minimisers compared with the homologies of the knots from \Cref{tab:non_minimizers}.}
	\label{tab:hom}
	\begin{tabular}{llllll}
		 ambient manifold  & homology & minimiser & homology & knot & homology \\
		\toprule
		$L(6,1)$  & $\Z_6$ & $m010$ & $\Z_6 + \Z$& $m019$ & $\Z$  \\
		$SFS [S2: (2,1) (3,2) (3,-1)]$  & $\Z_{15}$ & $m003$ & $\Z_5 + \Z$ & $m007$ & $\Z_3 + \Z$  \\
		$SFS [S2: (2,1) (4,1) (5,-4)]$  & $\Z_2$ & $m004$ & $\Z$ & $m009$ & $ \Z_2+ \Z$  \\
		$SFS [S2: (3,1) (3,1) (4,-3)]$  & $\Z_3$ & $m004$ & $\Z$ & $m023$ & $\Z_3 + \Z$  \\
		$SFS [S2: (2,1) (3,1) (9,-7)]$  & $\Z_3$ & $m007$ & $\Z_3 + Z$ & $m026$ & $\Z$  \\
		$SFS [D: (2,1) (2,1)] \cup_{(0,1 | 1,1)} SFS [D: (2,1) (3,2)]$  & $\Z_{20}$ & $m003$ & $\Z_5 + \Z$ & $m016$ & $\Z$  \\
		$SFS [D: (2,1) (3,1)] \cup_{(-1,1 | 0,1)} SFS [D: (2,1) (3,1)]$  & $\Z_{35}$ & $m006$ & $\Z_5 + \Z$ & $m017$ & $\Z_7 + \Z$  \\
		$SFS [D: (2,1) (2,1)] \cup_{(0,1 | 1,1)} SFS [D: (2,1) (3,1)]$  & $\Z_{28}$ & $m017$ & $\Z_7 + \Z$ & $m019$ & $\Z$  \\
		\bottomrule
	\end{tabular}
\end{table}

\subsection{Connected sums}\label{sec:connected_sum}

One natural question to ask is how topological volume behaves under connected sum. There is only one connected sum 
for which we can compute the topological volume exactly in \Cref{tab:complete}. Namely, we determine the topological volume
for $L(2,1) \# L(3,1)$ to be realised by $m010(-1, 1)$. Note that 
\begin{align*}
\volt(\; L(2,1) \# L(3,1) \; ) = \volh(m010) =& \volh(m009) \\
<&  \volh(m009) + \volh(m007) \\
=& \volt(\; L(2,1)\;) + \volt(\; L(3,1)\;).
\end{align*}
A surgery diagram of $m010$ that represents that manifold as the complement of a knot in $L(2,1) \# L(3,1)$ is shown in~\cref{fig:m010}.
\begin{question}
Let $M$ and $N$ be closed, orientable 3--manifolds. Is $\volt(\; M \# N \;) \le \volt(M)+\volt(N)$?
\end{question}

Note that Statement (1) in \Cref{cor:topinv} provides a similar, but weaker, result.

\begin{figure}[htb]
	\centering
	\includegraphics[width=6cm]{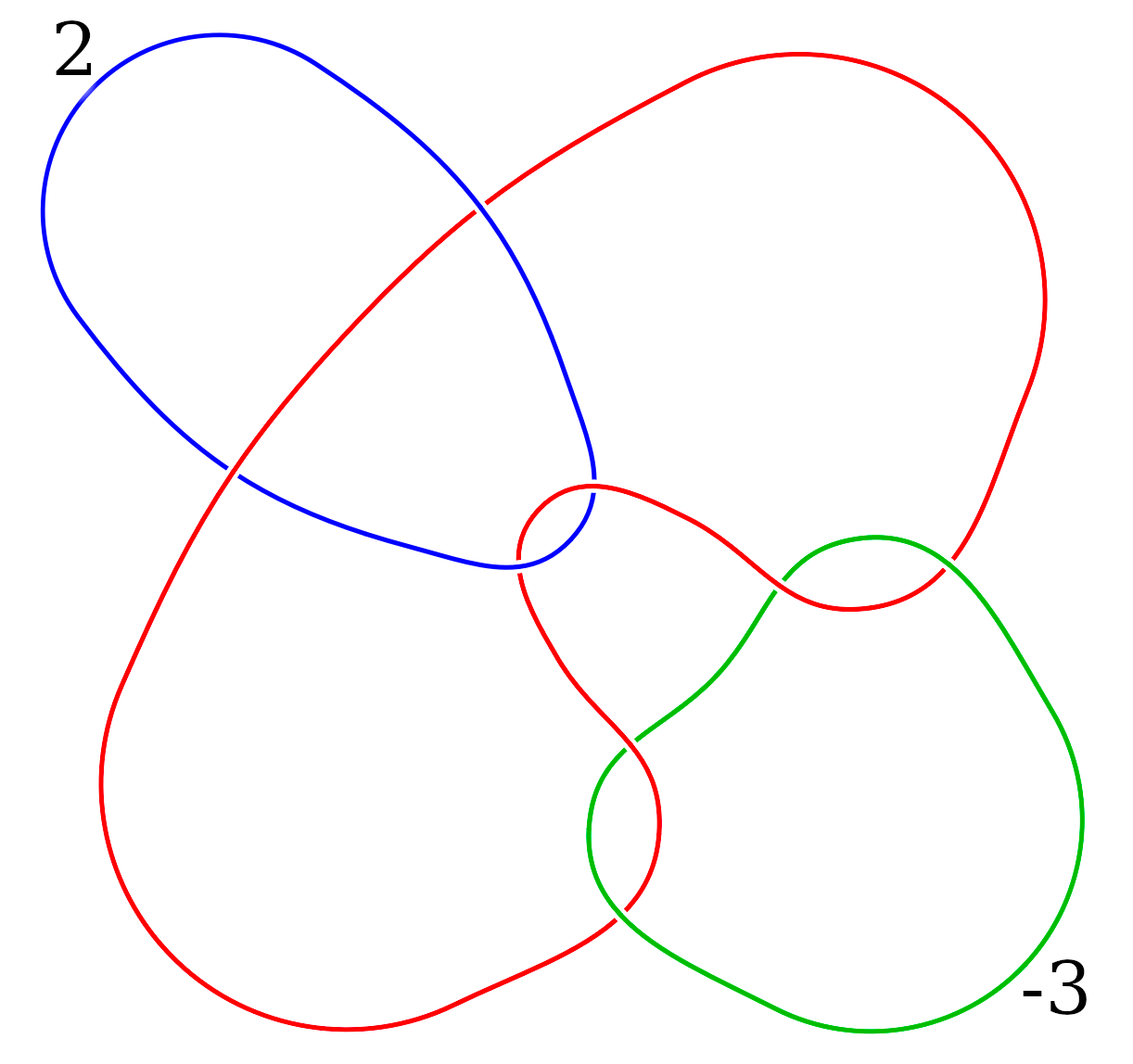}
	\caption{The manifold $m010$ seen as a knot in $L(2,1) \# L(3,1)$ minimises the volume. }
	\label{fig:m010}
\end{figure}

\begin{question}
Let $M$ be a closed, orientable 3--manifold not diffeomorphic to $S^3$. What is the limit for $n\rightarrow \infty$ of
\begin{equation*}
    \frac{\volt(\#_n M)}{n\volt(M)} \; ?
\end{equation*}
\end{question}

\section{Examples obtained from the Whitehead link and its sister}
\label{sec:WandP}

Let $W$ denote the complement of the right-handed Whitehead link with Seifert framing. This is $m129$ and $L5a1$ in the census, where the latter denotes the left-handed Whitehead link. We also let $P$ denote the $(-2,3,8)$ pretzel link complement, again, with Seifert framing. This is often called the \emph{sister manifold} of $W$ and appears as $m125$ and L13n5885 in the census of manifolds and links respectively. In the following, we use $W$ and $P$ to denote the complements of Whitehead link and sister manifold with the Seifert framing, and $m129$ and $m125$ for the Snappy census manifolds with geometric framing. 

\begin{figure}[htb]
	\centering
	\includegraphics[width=13cm]{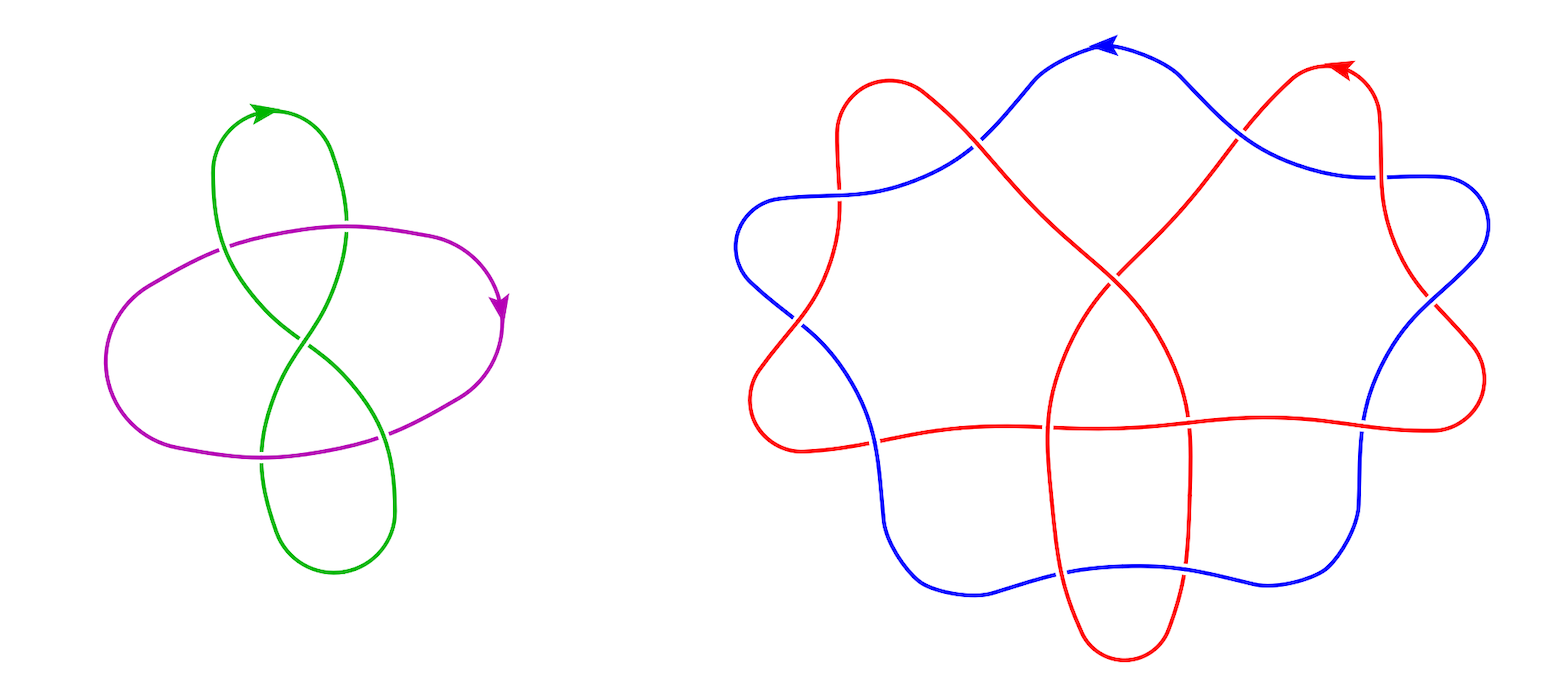}
	\caption{The Whitehead link $L5a1$ (left) and its sister link $L13n5885$ (right). The unknotted blue component is the first cusp (which we fill).}
	\label{fig:W_and_P}
\end{figure}

In SnapPy \cite{snappy}, the framing change (under filling the first cusp) is as follows:
\begin{align*}
    L5a1(p,q)(0,0)&\cong m129(p+2q,-q)(0,0)\\
    L13n5885(p,q)(0,0)&\cong m125(p-4q,p-3q)(0,0)
\end{align*}

In this section we show that, for $p^2 + q^2$ sufficiently large, $L(p,q)$ has as minimiser a filling of the Whitehead or its sister link, cf.\thinspace \Cref{prop:lens-space-limit}. 

Our computation of the smallest volume lens spaces given in \Cref{tab:complete} shows that the minimisers of all lens spaces of topological volume at most 3.07---with $p^2 + q^2$ quite small---also arise as fillings of the Whitehead or its sister link.
However, there are lens spaces for which the minimiser is not realised as a filling on either of these. One example is $L(4,1)$ (see \Cref{cor:L4-is-special}). A surgery diagram of $m034$ that displays this manifold as a knot complement of a knot in $L(4,1)$ is shown in \Cref{fig:m034}. 

\begin{question}
What is the complete list of all lens spaces that do not have minimiser a filling of the Whitehead or its sister link?
\end{question}

\begin{figure}[htb]
	\centering
	\includegraphics[width=9cm]{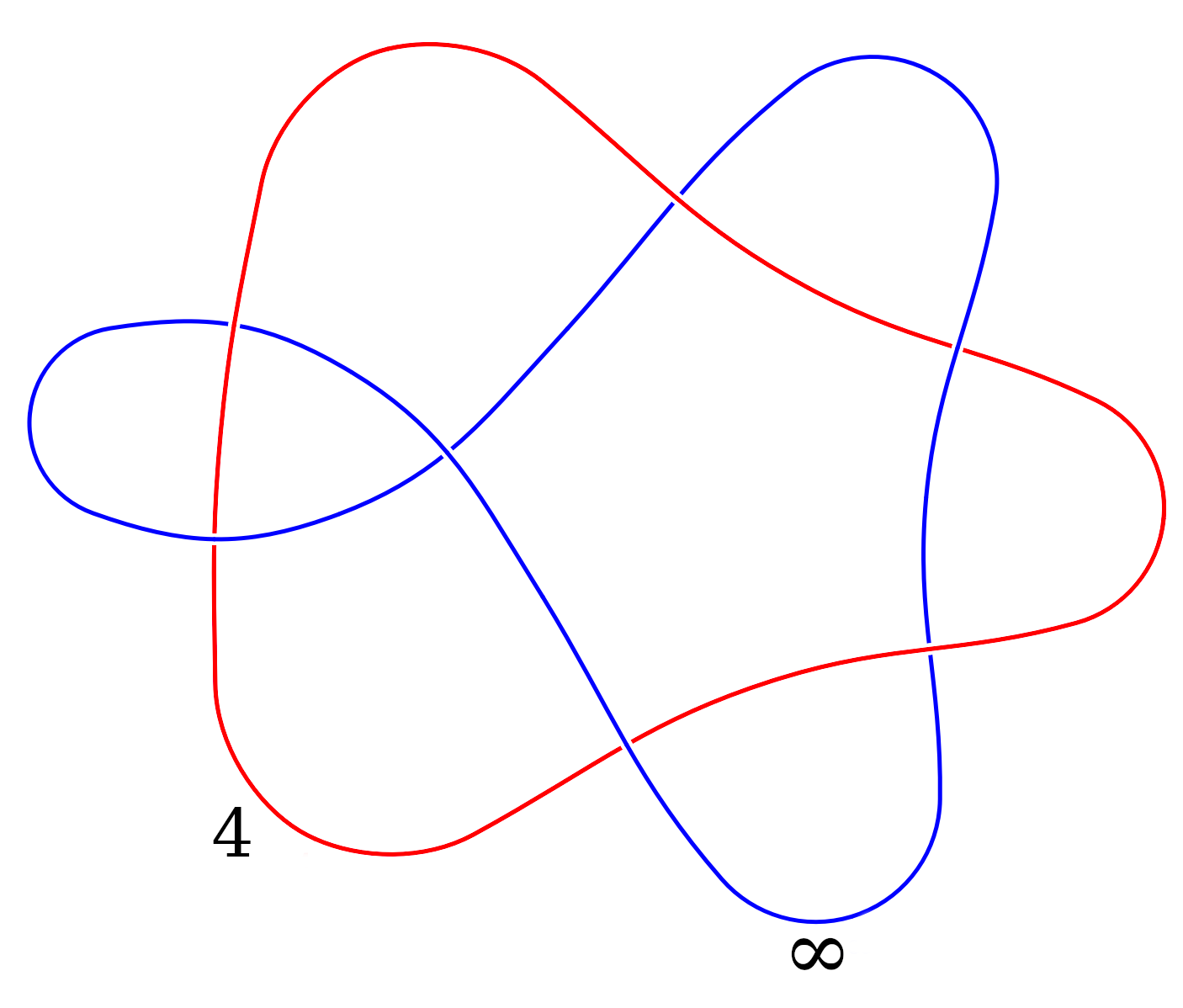}
	\caption{A surgery diagram of $m034$}
	\label{fig:m034}
\end{figure}

\subsection{Volume functions on Dehn surgery space}
\label{sec:dehnsurgeryspace}

Hodgson and Masai~\cite[\S 2]{Hodgson_Masai} give an excellent exposition of results of Neumann and Zagier~\cite{Neumann-volumes-1985} relating the decrease in volume during hyperbolic Dehn filling to the length of the surgery slope. Moreover, they analyse the corresponding functions for $P$ and $W$ in ~\cite[\S 4-5]{Hodgson_Masai}. We only state the main results required here and refer the reader to~\cite{Hodgson_Masai}  for a justification and further details.

For a cusped hyperbolic 3--manifold $M,$ let $\Delta \vol_M (p,q) = \volh(M) - \volh(M(p,q)),$ where it is understood that a fixed cusp of $M$ is filled.

We note that $(p,q)$--surgery on the first cusp of $M \in \{ P, W\}$ for $p^2+q^2$ sufficiently large gives a one-cusped hyperbolic 3--manifold $M((p,q), \infty)$. Since $M((p,q),(1,0)) \cong L(p,q),$ this gives $\volt(L(p,q)) \le \volh M((p,q), \infty)$ (note that other filling slopes may result in a homeomorphic lens space, which must be addressed in the proof of \Cref{prop:lens-space-limit}). Since $P$ and $W$ are of equal volume, we have
\[
\volh P((p,q),\infty)) - \volh W((p,q),\infty)) = \Delta \vol_W (p,q) - \Delta \vol_P (p,q).
\]

With respect to the Seifert framing on component $0$ on $W$, 
\cite[(5.14)]{Hodgson_Masai} gives:
\begin{align}
\Delta \vol_W (p,q) = 
\frac{2 \pi^2}{p^2 + 4 p q + 8 q^2}
& - \frac{\pi^4 (p^2 - 8 q^2) (p^2 + 8 p q + 8 q^2)}{3 (p^2 + 4 p q + 8 q^2)^4}\nonumber \\
&+O\Bigg( \frac{1}{(p^2 + 4 p q + 8 q^2)^3}\Bigg).\label{eq:DvolW}
\end{align}

With respect to the Seifert framing on component $0$ on $P$, \cite[(5.11)]{Hodgson_Masai} gives:
\begin{align}
\Delta \vol_P (p,q) = 
\frac{\pi^2}{(p - 4 q)^2 + (p - 3 q)^2}
&+
\frac{\pi^4 (4 p^4 - 80 p^3 q + 540 p^2 q^2 - 1520 p q^3 + 1535 q^4)}{24 ((p - 4 q)^2 + (p - 3 q)^2)^4}\nonumber \\
&+ 
O\Bigg( \frac{1}{((p - 4 q)^2 + (p - 3 q)^2)^3}\Bigg).\label{eq:DvolP}
\end{align}

\subsection{The topological volume of all but finitely many lens spaces}

Agol~\cite{agol-2cusped} showed that $W$ and $P$ are the minimal volume orientable hyperbolic 3-manifolds with two cusps. This property is key for the following:

\begin{reptheorem}{prop:lens-space-limit}
For all but finitely many homeomorphism types of lens spaces $L(p,q)$ ($p>q\geq 1$ coprime), the volume minimising link in $L(p,q)$ has complement one of $W(\frac{p'}{q'}, \infty)$ or $P(\frac{p''}{q''}, \infty)$, where the filling slopes $\frac{p'}{q'}$ and $\frac{p''}{q''}$ can be determined from an efficiently computable finite set. For $P$ this means that we fill along the unknotted component.
\end{reptheorem}

Throughout, the surgery coefficient $\frac{1}{0}$ is the filling with the infinity slope, while we do not write any slope if we leave one boundary component unfilled.

\begin{proof}
First note that, by \Cref{prop:isometry}, we can assume without loss of generality that we fill along the first cusp of both $W$ and $P$. Since the first cusp is an unknot (in $S^3$), it follows from the classification of lens spaces, that all Dehn fillings of $W$ or $P$ yielding a knot in $L(p,q)$ must have filling coefficients obtained from the following operations
\begin{align}
  (p,q) &\mapsto (p,q+np), & \forall n \in \mathbb{Z} \label{lenspace_1}\\
  (p,q) &\mapsto (p,q^*), & q^*q \equiv 1 \mod p \label{lenspace_2}\\
  (p,q) &\mapsto (-p,q), (p,-q).  \label{lenspace_3}
\end{align}
The volume functions ($\vol_W-\Delta \vol_W(p,q)$, for $\vol_W(p,q)$ from Equation \ref{eq:DvolP} and the correspondingly for $P$) derived by Hodgson and Masai in \cite{Hodgson_Masai} exclude all but finitely many fillings slopes satisfying \Cref{lenspace_1,lenspace_2,lenspace_3} from being volume minimising. 
Since these functions are strictly monotonically increasing with respect to their near-elliptical level sets (cf. \Cref{sec:dehnsurgeryspace}), this is a straightforward calculation.

With this setup, suppose to the contrary that there are infinitely many homeomorphism types of lens spaces $L(p_i,q_i)$, $p_i > q_i \geq 1$, with the property that some volume minimising link complement $M_i \subset L(p_i,q_i)$ is not homeomorphic with a Dehn filling on $P$ or $W.$ We may assume that $p_i^2 + q_i^2$ is sufficiently large, such that $W$ and $P$ have filling slopes resulting in hyperbolic knots in $L(p,q)$.

Hence for all $\frac{p'_i}{q'_i}$ obtained from $\frac{p_i}{q_i}$ by \Cref{lenspace_1,lenspace_2,lenspace_3} and with $W(\frac{p'_i}{q'_i})$, $P(\frac{p'_i}{q'_i})$ hyperbolic, we have $\volh(M_i) \le \volh(W(\frac{p'_i}{q'_i})),\volh(P(\frac{p'_i}{q'_i})) < \volh(W) = \volh(P)$. According to \cite{agol-2cusped}, $W$ and $P$ are the minimal volume orientable hyperbolic 3-manifolds with two cusps. Hence each $M_i$ has exactly one cusp.
Then $\{ \volh(M_i)\}$ is a set of volumes of hyperbolic manifolds with one cusp and bounded above by $\volh(W) = \volh(P).$ 

If this is a finite set, then there is $v_0\in \{ \volh(M_i)\}$ with the property that infinitely many of the manifolds $M_i$ have volume equal to $v_0.$ But then infinitely many lens spaces have the same volume, contradicting \Cref{pro:finiteness}.

Hence $\{ \volh(M_i)\}$ is an infinite set. But the first accumulation point for volumes of hyperbolic manifolds with one cusp is $\volh(W).$ However, a manifold with volume arbitrarily close to this value is obtained by Dehn surgery on either $P$ or $W$, a contradiction to our assumption.

This proves that all but finitely many lens spaces $L(p,q)$ have volume minimiser a Dehn filling on $P$ or $W.$ 
\end{proof}

\begin{corollary}
The sequence $(\;\volt(L(p,q))\;)_{(p,q)}$ converges, as $p^2+q^2\to\infty$, to the volume of the Whitehead link complement. \qed
\end{corollary}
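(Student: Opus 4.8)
The plan is to deduce this corollary directly from \Cref{prop:lens-space-limit} together with the Neumann--Zagier volume asymptotics recorded in \Cref{eq:DvolW,eq:DvolP}. First I would observe that for all but finitely many lens spaces $L(p,q)$, \Cref{prop:lens-space-limit} identifies the volume minimiser as a Dehn filling $W(\frac{p'}{q'},\infty)$ or $P(\frac{p''}{q''},\infty)$ for suitable slopes obtained from $\frac{p}{q}$ via \Cref{lenspace_1,lenspace_2,lenspace_3}. Since $\volt(L(p,q))$ equals the volume of this filling, and every such filling has volume strictly below $\volh(W)=\volh(P)$, the sequence is bounded above by the Whitehead link volume. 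Thus the entire convergence question reduces to showing that the volume \emph{defect} $\Delta\vol_W$ or $\Delta\vol_P$ tends to zero as $p^2+q^2\to\infty$.

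Next I would make the reduction to asymptotics precise. For a fixed minimising filling realising $\volt(L(p,q))=\volh(W)-\Delta\vol_W(p',q')$ (and symmetrically for $P$), convergence to $\volh(W)$ is equivalent to $\Delta\vol_W(p',q')\to 0$. From \Cref{eq:DvolW} the leading term is $\tfrac{2\pi^2}{p^2+4pq+8q^2}$, which is governed by the square of the slope length; by the Neumann--Zagier theory (as exposed in \cite{Hodgson_Masai}), the length of the filling slope grows without bound precisely when $p^2+q^2\to\infty$ along the admissible slopes, forcing the denominator to infinity and hence $\Delta\vol_W\to 0$. The analogous statement holds for $P$ via \Cref{eq:DvolP}. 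The key point is that the minimising slope is, up to the finitely many exceptions, one of the near-minimal-length slopes among those in \Cref{lenspace_1,lenspace_2,lenspace_3}, so its length still diverges as $p^2+q^2\to\infty$.

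I expect the main obstacle to be controlling the \emph{slope} whose length enters the asymptotic, rather than $(p,q)$ itself. The quantity $p^2+q^2$ refers to the lens-space parameters, whereas the volume formulas depend on the chosen filling slope $(p',q')$ or $(p'',q'')$, which may differ from $(p,q)$ by the normalisations in \Cref{lenspace_1,lenspace_2,lenspace_3}. One must argue that the minimising slope cannot stay bounded while $p^2+q^2\to\infty$: a bounded slope would by \Cref{pro:finiteness} and the discreteness of the low-volume fillings produce only finitely many distinct lens spaces, a contradiction. Equivalently, since for each fixed bound on slope length there are only finitely many admissible slopes and hence finitely many resulting lens spaces, the minimising slope length must diverge along any sequence with $p^2+q^2\to\infty$. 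Once this divergence is established, the vanishing of $\Delta\vol$ is immediate from \Cref{eq:DvolW,eq:DvolP}, and combined with the upper bound $\volt(L(p,q))<\volh(W)$ and the lower-volume approach to $\volh(W)$ as its first one-cusped accumulation point, the squeeze gives $\volt(L(p,q))\to\volh(W)$, completing the proof.
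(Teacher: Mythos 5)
Your argument is correct and is essentially the intended one: the paper states this corollary without proof, as an immediate consequence of \Cref{prop:lens-space-limit} combined with the Neumann--Zagier asymptotics \Cref{eq:DvolW,eq:DvolP}, which is exactly the route you take. Your observation that every admissible slope from \Cref{lenspace_1,lenspace_2,lenspace_3} has first coordinate $\pm p$, so that the (positive definite) denominators in the defect formulas diverge with $p^2+q^2$, is the only point needing care, and you handle it correctly.
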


\begin{corollary}
\label{cor:p-1}
The topological volume of the lens space $L(p,q)$ is realised by a knot, i.e.\ $h(L(p,q))=1$. Moreover, for $|p|$ sufficiently large, the minimiser of $L(p,1)$ is unique and the complement is a tunnel number one once-punctured torus bundle.
\end{corollary}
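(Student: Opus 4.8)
The plan is to combine \Cref{prop:lens-space-limit} with the explicit volume expansions \eqref{eq:DvolW} and \eqref{eq:DvolP}. For the first assertion, once $p^2+q^2$ is sufficiently large, \Cref{prop:lens-space-limit} provides a volume minimiser of $L(p,q)$ whose complement is $W(\tfrac{p'}{q'},\infty)$ or $P(\tfrac{p''}{q''},\infty)$. Each of these is obtained by filling a single cusp of a two-cusped manifold and therefore has exactly one cusp, so the minimiser is a knot. This already yields the first sentence, and it remains to treat the case $q=1$.

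For $q=1$ I would first pin down the minimising slope. By \eqref{lenspace_1}--\eqref{lenspace_3}, the fillings of $W$ (resp.\ $P$) that produce a knot in $L(p,1)$ use slopes $(p,q)$ with $q\equiv\pm1\pmod p$. Since the Hodgson--Masai volume functions are strictly monotone with respect to their near-elliptical level sets (\Cref{sec:dehnsurgeryspace}), minimising the volume of the filled manifold for fixed $p$ is, to leading order, the same as minimising the positive-definite quadratic forms $Q_W(p,q)=p^2+4pq+8q^2$ and $Q_P(p,q)=(p-4q)^2+(p-3q)^2$ over the admissible values of $q$. A short computation shows that for $p$ large $Q_W$ attains its minimum uniquely at $q=-1$, with value $p^2-4p+8$, while $Q_P$ is minimised at $q=1$, with value $2p^2-14p+25$. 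Comparing the leading terms $2\pi^2/(p^2-4p+8)$ and $\pi^2/(2p^2-14p+25)$ of $\Delta\vol_W$ and $\Delta\vol_P$ shows that the filling of $W$ produces the larger volume drop (indeed $\Delta\vol_W\sim 4\,\Delta\vol_P$), so the $W$-filling realises the smaller volume. Because this minimum of $Q_W$ is strict and the error terms in \eqref{eq:DvolW} and \eqref{eq:DvolP} are of strictly lower order, for all sufficiently large $p$ the unique volume minimiser of $L(p,1)$ is $W((p,-1),\infty)$, which in the census framing $L5a1(p,q)(0,0)\cong m129(p+2q,-q)(0,0)$ is $m129(p-2,1)$.

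It then remains to identify this family topologically. The key point is that $m129(n,1)$ is the once-punctured torus bundle with monodromy $-R^{n-2}L$; this matches the census, where $m129(3,1)=m003$, $m129(4,1)=m010$ and $m129(5,1)=m022$ are the once-punctured torus bundles with monodromies $-RL$, $-R^2L$ and $-R^3L$. I would establish this uniformly, for instance by exhibiting the fibration of $W((p,-1),\infty)$ over $S^1$ with once-punctured torus fibre directly from the $(p,-1)$-filling of the Whitehead link. Tunnel number one then follows, since a once-punctured torus bundle is the union of two copies of $T_0\times I$ (a genus-two handlebody) along the fibre, giving a genus-two Heegaard splitting of the knot exterior.

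The step I expect to be the main obstacle is this last topological identification: producing a uniform description of $W((p,-1),\infty)$ as a once-punctured torus bundle valid for all large $p$, rather than verifying it case by case from the census. The volume bookkeeping is routine once the quadratic forms and the framing conversion are in place, but I would need the strict monotonicity of the volume functions and control of the asymptotic error terms to guarantee that the discrete minimisation over $q\equiv\pm1\pmod p$ has a strict, and hence unique, minimiser.
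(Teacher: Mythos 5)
Your argument follows the paper's proof essentially step for step: the first claim is read off from \Cref{prop:lens-space-limit} (each candidate complement has a single cusp), and for $q=1$ the paper likewise restricts to slopes congruent to $\pm 1$ modulo $p$, compares the leading terms of \Cref{eq:DvolW,eq:DvolP} (finding, as you do, that $\Delta\vol_W$ is asymptotically four times $\Delta\vol_P$), and singles out $(p,-1)$ as the strictly optimal slope on $W$, whence uniqueness. The only real divergence is the final step: the paper simply cites a known theorem that integer fillings of the Whitehead link are once-punctured torus bundles of tunnel number one, whereas you propose to prove this from scratch and correctly flag it as the main obstacle. One caution about your sketch there: a once-punctured torus bundle is a single copy of $T_0\times I$ with its two ends identified by the monodromy, so cutting along a fibre does not express the knot exterior as a union of two handlebodies along their full boundaries, and hence does not directly give a genus-two Heegaard splitting; tunnel number one for these manifolds is true but needs the standard unknotting-tunnel argument (an essential arc in a fibre), which is exactly what the reference invoked in the paper supplies. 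With that step replaced by the citation, your proof is the paper's proof.
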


\begin{proof}
	Every lens space $L(p, q)$ is obtained as a $(-p/q)$-filling of the first component of (say) $P$. By performing an $n$-fold Rolfsen twist on that component we can change the surgery coefficient to $-p/(q+np)$. Thus we can assume that the filling slope is sufficiently large so that by Thurston's hyperbolic Dehn surgery theorem~\cite{Thurston_book} the other component of $P$ yields a hyperbolic knot in $L(p,q)$. But this implies that $\volt(L(p,q))<\vol(P)$. By Agol~\cite{agol-2cusped} any hyperbolic manifold with volume less than $\vol(P)$ has at most one cusp and thus the minimiser is a knot in $L(p,q)$.
  
  For the second statement, observe that $q$ must be within $\pm 1$ of a multiple of $p$ (see \Cref{lenspace_1,lenspace_2,lenspace_3}). Inserting this into the first terms of \Cref{eq:DvolW,eq:DvolP}, we can see that---for $p$ sufficiently large---the minimiser must be obtained by filling slope $(p, -1)$ on $W$ (setting $p\geq 0$ without loss): For $(p,\pm 1)$, the function $\Delta \vol_P (p,q)$ converges to $\frac{\pi^2}{2p^2}$ plus lower order terms, while $\Delta \vol_W (p,q)$ converges to $\frac{2\pi^2}{p^2}$ plus lower order terms; and we have $\Delta \vol_W (p,-1) = \frac{2\pi^2}{p^2 -4p + 1}$ while $\Delta \vol_W (p,1) = \frac{2\pi^2}{p^2 +4p + 1}$. And thus, the unique minimiser is given by an integer filling of the Whitehead link. These are known to be once-punctured torus bundles with tunnel number $1$~\cite[Theorem~1.3]{torus_bundles}.
\end{proof}

\begin{corollary}\label{cor:L4-is-special}
The lens space $L(4,1)$ has as a minimiser a one-cusped manifold of volume at most $\volh(m034)\approx 3.166$ and which is not obtained by Dehn filling $W$ or $P.$
\end{corollary}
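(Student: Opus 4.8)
The plan is to combine an explicit upper bound with a volume estimate ruling out all Whitehead-- and sister--link fillings. First I would record the upper bound: the surgery diagram in \Cref{fig:m034} presents $m034$ as the complement of a knot in $L(4,1)$, so that $\volt(L(4,1))\le\volh(m034)\approx 3.166$. Because this value is strictly less than $v_{oct}=\volh(W)=\volh(P)$, and because by Agol \cite{agol-2cusped} every orientable hyperbolic $3$--manifold with at least two cusps has volume at least $v_{oct}$, any volume minimiser of $L(4,1)$ must have exactly one cusp (it has at least one, as $L(4,1)$ is non--hyperbolic). This proves that the minimiser is a one--cusped manifold of volume at most $\volh(m034)$.

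For the remaining claim I would argue by contradiction, assuming the minimiser is a filling of $W$ or $P$. By \Cref{prop:isometry} we may assume the first cusp is filled, and the first component of each link is an unknot in $S^3$, so filling it with slope $(p',q')$ produces a knot complement in $L(p',q')$. Hence, exactly as in the proof of \Cref{prop:lens-space-limit}, the classification of lens spaces together with \Cref{lenspace_1,lenspace_2,lenspace_3} shows that the slopes yielding $L(4,1)$ are precisely the $(4,q')$ with $q'$ odd. The task is then to bound from below the volumes of the one--cusped manifolds $W((4,q'),\infty)$ and $P((4,q'),\infty)$ over all such $q'$.

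The heart of the argument is to show that every hyperbolic filling in this family has volume strictly greater than $\volh(m034)$. Organising the slopes by length via the expansions \Cref{eq:DvolW,eq:DvolP}, the shortest slopes --- $(4,-1)$ for $W$ and $(4,1)$ for $P$ --- are so short that the predicted volume falls below the minimal cusped volume ($\approx 2.03$), forcing these to be exceptional (non--hyperbolic) fillings, which we discard. The shortest remaining slopes realise the minimal volumes in each family: for $W$ this is the knot complement from slope $(4,1)$, of volume $\approx 3.17$, and for $P$ the one from slope $(4,3)$, of volume $\approx 3.55$; since $\Delta\vol$ is monotone along its near--elliptical level sets (cf.\ \Cref{sec:dehnsurgeryspace}), all longer slopes give volumes increasing towards $v_{oct}$. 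As each of these exceeds $\volh(m034)\ge\volt(L(4,1))$, no filling of $W$ or $P$ can realise the minimiser, giving the contradiction.

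The main obstacle is the Whitehead filling at slope $(4,1)$: its volume $\approx 3.17$ sits only marginally above $\volh(m034)\approx 3.166$, and the asymptotic Neumann--Zagier/Hodgson--Masai expansions are not accurate enough at so short a slope to certify the strict inequality on their own. I would close this gap by computing verified volumes in SnapPy \cite{snappy} for the finitely many borderline slopes, confirming both that $\volh(W((4,1),\infty))>\volh(m034)$ and that the two shortest slopes above are genuinely non--hyperbolic. A secondary point requiring care is the translation between the topological framing underlying \Cref{eq:DvolW,eq:DvolP} and the census framings of $m129$ and $m125$ recorded at the start of \Cref{sec:WandP}, so that the slopes $(4,q')$ are matched to the correct SnapPy fillings.
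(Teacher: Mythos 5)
Your proposal is correct and follows essentially the same route as the paper: bound $\volt(L(4,1))$ above by $\volh(m034)$ via the surgery diagram, enumerate the slopes $(\pm4,\text{odd})$ on $W$ and $P$ yielding a knot in $L(4,1)$, discard the two exceptional fillings, and observe that the smallest hyperbolic filling volumes (realised at $W((4,1))$ and $W((4,-3))$, $\approx 3.177$) already exceed $3.166$. Your added care about the one-cusped claim via Agol's two-cusped volume bound and about certifying the narrow gap at $W((4,1))$ with verified computation rather than the asymptotic expansion is sensible and matches what the paper implicitly relies on.
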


\begin{proof}
The filling coefficients for $W$ and $P$ to obtain a knot in lens space $L(4,1)$ are precisely $(\pm 4, 2m+1)$, for $m \in \mathbb{Z}$. Of those, only $W((4,-1))$ and $P((4,1))$ are not hyperbolic: The former contains a splitting Klein bottle, the latter is a solid torus. The other cases can be verified to be hyperbolic using the $6$-theorem \cite{agol-bounds,lackenby-6} together with Snappy.

Using the volume bounds from \cite{FuterKalfagianniPurcell} for sufficiently large slopes together with direct computations using SnapPy for short slopes, we see that of the hyperbolic filling slopes, the smallest volume ones are $W((4,1))$ and $W((4,-3))$ with a volume of $\approx 3.17729$.  On the other hand, we have $\volt(L(4,1)) \le \volh(m034) \approx 3.166.$ 
\end{proof}

\begin{remark}
It is important to note that we require both the Whitehead link $W$ and its sister $P$ for \Cref{prop:lens-space-limit} to hold: The volume minimiser for $L(p,1)$, $p$ sufficiently large, is uniquely realised as filling $W(p,-1)$, as shown in \Cref{cor:p-1}. On the other hand, it can be deduced from \Cref{tab:complete} that $L(13,3)$ has minimiser $m011$ and $L(19,7)$ and $L(18,5)$ both have minimiser $m016$. But according to \Cref{tab:whichone}, these can only be obtained by Dehn fillings of $P$, but not of $W$. 
\end{remark}

We conclude this section with a question that is analogous to the third question in~\cite[\S 9]{Hodgson_Masai}.
\begin{question}
\label{q:largest}
What is the largest volume less than $\volh(W)$ of a one-cusped hyperbolic 3--manifold that does not arise from Dehn filling on $W=m129$ or $P=m125$? 
\end{question}

An answer to \Cref{q:largest} would allow us to give a quantitative version of \Cref{prop:lens-space-limit}.

\subsection{Determining the minimiser}

We are interested in an answer to the following question.

\begin{question}
  \label{q:whichone}
  Suppose $p$ and $q$ sufficiently large, and suppose $L(p,q)$ is known to have minimiser a 
  Dehn filling of $P$ or $W.$ Can we decide which of the two it is? 
\end{question}

\Cref{q:whichone} is hardest to answer when 
$\Delta \vol_W (p,q) - \Delta \vol_P (p,q) \approx 0$ for at least one (possibly 
minimising) filling slope from (the proof of) \Cref{prop:lens-space-limit}, yielding a 
knot in a given homeomorphism type of lens space. 
Using the first term Taylor series expansion for $\Delta\vol_P$ and $\Delta\vol_W$ 
from \Cref{eq:DvolW,eq:DvolP}, we have 
$\Delta \vol_W (p,q) \approx \Delta \vol_P (p,q)$ for 
\[
\frac{2 \pi^2}{p^2 + 4 p q + 8 q^2} = \frac{\pi^2}{(p - 4 q)^2 + (p - 3 q)^2}
\]
This is the case for lines 
\[ \mathbf{q}_{1} (p) = \frac{p}{42} \left ( 16 + \sqrt{130}\right)\, \text{ and }\, \mathbf{q}_{2} (p) = \frac{p}{42} \left ( 16 - \sqrt{130}\right),\]
that, for a given $p$, determine a corresponding value for $q$.
The zero set of $\Delta \vol_W (p,q) - \Delta \vol_P (p,q)$ converges to these 
lines as $p$ (and $q$) approach infinity. Note that this means that, for $p$ (and $q$) sufficiently 
large, and $\frac{p}{q}$ a fixed but arbitrary distance away from the graphs of 
$\mathbf{q}_{1,2}$, the difference $\Delta \vol_W (p,q) - \Delta \vol_P (p,q)$ can 
efficiently be bounded away from $0$ (e.g., using the first terms of their Taylor 
series expansions).

It follows that differentiating between $\Delta \vol_W (p,q)$ and $\Delta \vol_P (p,q)$
is possible except for points $(p,q)$ with the distance to the graphs of 
$\mathbf{q}_{1,2}$ arbitrarily small. Since we can efficiently compute 
$\Delta \vol_W (p,q)$ and $\Delta \vol_P (p,q)$ up to error terms in $p$ (and $q$), this 
distance must be considered with respect to the magnitude of $p$ (and $q$). In other 
words, differentiating between $\Delta \vol_W (p,q)$ and $\Delta \vol_P (p,q)$ reduces
to differentiating between those two functions for rational approximations of 
$x_1 = \frac{1}{42} \left ( 16 + \sqrt{130}\right)$ and
$x_2 = \frac{1}{42} \left ( 16 - \sqrt{130}\right)$.

For this reason, we briefly review some facts about continued fractions and refer the 
reader to \cite[Chapter 7]{ContinuedFractions} for details. Let $x \in \mathbb{R}$, and 
let $x = \langle a; a_1, \ldots , a_n, \ldots \rangle$ be its continued fraction 
expansion. We call the sequence of rational numbers $(\frac{p_i}{q_i})_{i \in \mathbb{N}}$
with $\frac{p_i}{q_i} = \langle  a; a_1, \ldots , a_i \rangle$ the {\em sequence of 
convergents} for $x$. We have $|xq_i - p_i | < | xq_{i+1} - p_{i+1} |$ for all $i \geq 0$ 
(see \cite[Theorem 7.12]{ContinuedFractions}), and 
$\left|\frac{1}{q_{i+2}} \right| < |xq_i - p_i | < \left|\frac{1}{q_{i+1}}\right|$ (see 
\cite[Theorem 7.11 and the proof of 7.12]{ContinuedFractions}). Moreover, we have the 
following statement.  

\begin{theorem}[Theorem 7.14 from \cite{ContinuedFractions}]
  \label{thm:closest}
  Let $\frac{p}{q} \in \mathbb{Q}$, $p$, $q$ co-prime, $q > 0$, and let $x \in \mathbb{R}$
  with convergents $(\frac{p_i}{q_i})_{i \in \mathbb{N}}$. If 
  $|q x - p | < |q_i x - p_i |$ for some $i$, then $q \geq q_{i+1}$. \qed
\end{theorem}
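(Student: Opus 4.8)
The plan is to prove the classical ``best approximation of the second kind'' property in the sharp form stated: assuming $|qx - p| < |q_i x - p_i|$, I would argue by contradiction that $q \geq q_{i+1}$, supposing instead $q < q_{i+1}$. The engine of the whole argument is to re-express the competitor $(p,q)$ in the integral basis furnished by two consecutive convergents $\frac{p_i}{q_i}$ and $\frac{p_{i+1}}{q_{i+1}}$.

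First I would solve the linear system $p = \alpha p_i + \beta p_{i+1}$ and $q = \alpha q_i + \beta q_{i+1}$ for the unknowns $(\alpha, \beta)$. Since the determinant $p_i q_{i+1} - p_{i+1} q_i$ equals $\pm 1$ (the standard convergent identity, which I would quote from \cite{ContinuedFractions} rather than reprove), the system is invertible over $\mathbb{Z}$, so $\alpha, \beta \in \mathbb{Z}$. I then dispose of the degenerate cases. If $\beta = 0$, then $(p,q) = \alpha (p_i, q_i)$; coprimality of both pairs together with $q > 0$ forces $\alpha = 1$, whence $\frac{p}{q} = \frac{p_i}{q_i}$ and $|qx - p| = |q_i x - p_i|$, contradicting the strict hypothesis. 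If $\alpha = 0$, then $q = \beta q_{i+1}$ with $\beta \geq 1$, giving $q \geq q_{i+1}$ directly. Hence I may assume $\alpha, \beta \neq 0$, and a sign inspection of $q = \alpha q_i + \beta q_{i+1}$ under the standing assumption $0 < q < q_{i+1}$ rules out the cases where $\alpha, \beta$ are both positive (which would force $q \geq q_{i+1}$) or both negative (which would force $q < 0$); therefore $\alpha$ and $\beta$ have opposite signs.

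The crux is then the identity
\[
qx - p = \alpha (q_i x - p_i) + \beta (q_{i+1} x - p_{i+1}).
\]
Here I would invoke the second standard fact, that consecutive convergents straddle $x$, so that $q_i x - p_i$ and $q_{i+1} x - p_{i+1}$ likewise have opposite signs (again citing \cite{ContinuedFractions}). Combining the two oppositenesses, the summands $\alpha (q_i x - p_i)$ and $\beta (q_{i+1} x - p_{i+1})$ share a common sign, so their magnitudes add:
\[
|qx - p| = |\alpha|\,|q_i x - p_i| + |\beta|\,|q_{i+1} x - p_{i+1}| \geq |q_i x - p_i|,
\]
where the last step uses $|\alpha| \geq 1$. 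This contradicts $|qx - p| < |q_i x - p_i|$, completing the argument.

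The step I expect to be the main obstacle is the sign bookkeeping: checking carefully that the opposite signs of $(\alpha, \beta)$ and of the residuals $(q_i x - p_i,\, q_{i+1} x - p_{i+1})$ genuinely force the two products to agree in sign, and handling the boundary situations where $x$ is rational and some residual $q_j x - p_j$ vanishes (in which case the strict hypothesis cannot be met and the statement is vacuous). By contrast, the two structural inputs—the unimodular determinant identity and the straddling/alternation property of convergents—are routine facts from the theory that I would simply cite.
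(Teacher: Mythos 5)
The paper does not prove this statement---it is quoted verbatim as Theorem 7.14 of the cited reference on continued fractions---and your argument is precisely the standard textbook proof of the best-approximation property: express $(p,q)$ integrally in the unimodular basis of consecutive convergents, rule out the degenerate and same-sign cases, and use the alternation of the residuals $q_jx-p_j$ to make the magnitudes add. Your proof is correct (the only nitpick being that when $q_{i+1}x-p_{i+1}=0$ the hypothesis need not be vacuous, but the inequality $|qx-p|=|\alpha|\,|q_ix-p_i|\ge|q_ix-p_i|$ still closes the argument), so there is nothing to add.
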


\Cref{thm:closest} asserts that the sequence of convergents for $x$ is the 
best sequence of rational numbers to gradually approximate $x$: In the statement of the theorem,
we can assume that $i$ is maximal with respect to the inequality. Hence, for an arbitrary
rational number $\frac{p}{q}$, we have $|q x - p | \geq |q_{i+1} x - p_{i+1} |$ while 
$q \geq q_{i+1}$, and the inequality is strict if we assume that $\frac{p}{q}$ is not a 
term in the sequence of convergents.

This means that the magnitude of the difference between 
$\Delta \vol_W (p,q)$ and $\Delta \vol_P (p,q)$, relative to the magnitude of error bounds, is
minimised for the convergents $(\frac{p_i}{q_i})_{i \in \mathbb{N}}$ of 
$x_1 = \frac{1}{42} \left ( 16 + \sqrt{130}\right)$ and 
$(\frac{a_j}{b_j})_{j \in \mathbb{N}}$ of 
$x_2 = \frac{1}{42} \left ( 16 - \sqrt{130}\right)$.
Thus, any procedure to bound $\Delta \vol_W$ away from $\Delta \vol_P$ for rationals
$\frac{p_i}{q_i}, \frac{a_j}{b_j} \in \mathbb{Q}$, is a procedure to bound $\Delta \vol_W$
away from $\Delta \vol_P$ for arbitrary $\frac{p}{q}$.

We devise such a procedure by determining how many terms of the Taylor series expansion of 
$\Delta \vol_W$ and $\Delta \vol_P$ we must consider, in order to certify that they have 
distinct function values at $\frac{p_i}{q_i}, \frac{a_j}{b_j} \in \mathbb{Q}$. Since we 
may exclude finitely many points, we can assume $p$ (and $q$) to be sufficiently large,
and this reduces to a study of the asymptotic behaviour of the error terms.

We have the following continued fraction expansions:
\[x_1 = \frac{1}{42} \left ( 16 + \sqrt{130}\right) = \langle 0; 1,1,1,7,7,2,7,7,2, \ldots \rangle\]
and
\[x_2 = \frac{1}{42} \left ( 16 - \sqrt{130}\right) = \langle 0; 9,7,2,7,7,2,7, \ldots \rangle.\]
From these expansions, and excluding small values of $i$ and $j$, we compute that we have 
$q_{i+1}$ (resp.\thinspace $b_{i+1}$) is at least $2$ and at most $8$ times larger than 
$q_i$ (resp.\thinspace $b_i$). Hence, the sequences of convergents gradually approach 
their limit.

By the theory of rational approximations we know that---in the worst case---integer points $(p,q)$ are at distance 
$\Theta\left(q^{-1}\right)$ from lines $\mathbf{q}_{1,2}$. First note that, for 
$p$ (and $q$) large enough and near $\mathbf{q}_{1,2}$, we have $p \in \Theta(q)$. Moreover, the
difference between the first-order terms of $\Delta \vol_W$ and $\Delta \vol_P$,
\[D(p,q) = \frac{2 \pi^2}{p^2 + 4 p q + 8 q^2} - \frac{\pi^2}{(p - 4 q)^2 + (p - 3 q)^2},\] 
at unit distance from $\mathbf{q}_{1,2}$ is of magnitude $\Theta\left(q^{-2}\right)$. 
It follows that for the convergents $\frac{p_i}{q_i}, \frac{a_j}{b_j} \in \mathbb{Q}$, that
are at distance $\Theta\left(q^{-1}\right)$ from $\mathbf{q}_{1,2}$, this amounts to 
a difference $D(p_i,q_i) \in \Theta\left(q_i^{-4}\right)$ (and, identically, 
$D(a_j,b_j) \in \Theta\left(b_j^{-4}\right)$), which can possibly be counteracted 
by the second term of the Taylor series expansion of $\Delta \vol_W$ and $\Delta \vol_P$
(which is also of order $\Theta\left(q^{-4}\right)$ near $\mathbf{q}_{1,2}$), 
but---asymptotically---not by the third term. In particular, this argument verifies the
following statement:

\begin{proposition}
  \label{prop:twoterms}
  Let $(p,q)$ be sufficiently large. Then $\Delta \vol_W (p,q) > \Delta \vol_P (p,q)$ if and only if
  \begin{align*}
   &\frac{2 \pi^2}{p^2 + 4 p q + 8 q^2} - \frac{\pi^4 (p^2 - 8 q^2) (p^2 + 8 p q + 8 q^2)}{3 (p^2 + 4 p q + 8 q^2)^4} \\
    >\qquad  &\frac{\pi^2}{(p - 4 q)^2 + (p - 3 q)^2} +
\frac{\pi^4 (4 p^4 - 80 p^3 q + 540 p^2 q^2 - 1520 p q^3 + 1535 q^4)}{24 ((p - 4 q)^2 + (p - 3 q)^2)^4}.
\end{align*} 
\end{proposition}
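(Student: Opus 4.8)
The plan is to treat the difference $\Delta \vol_W(p,q) - \Delta \vol_P(p,q)$ as a sum of three pieces dictated by the expansions \eqref{eq:DvolW} and \eqref{eq:DvolP}: the leading difference
\[
D(p,q) = \frac{2\pi^2}{p^2+4pq+8q^2} - \frac{\pi^2}{(p-4q)^2+(p-3q)^2} = \pi^2\,\frac{3p^2-32pq+42q^2}{\big(p^2+4pq+8q^2\big)\big((p-4q)^2+(p-3q)^2\big)},
\]
the second-order difference $E(p,q)$ coming from the two $\pi^4$-terms, and the truncated remainder $R(p,q)$. Writing $r^2 = p^2+q^2$, the function $D$ is homogeneous of degree $-2$, the function $E$ of degree $-4$, and $R = O(r^{-6})$ by the error estimates in \eqref{eq:DvolW} and \eqref{eq:DvolP}. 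The proposition is exactly the assertion that, for $(p,q)$ large, $\sign(D+E+R) = \sign(D+E)$, i.e. that the neglected tail $R$ never reverses the two-term comparison. Note that the numerator of $D$ is $N(p,q)=3p^2-32pq+42q^2$, whose zero locus is precisely the pair of lines $\ell_{1,2}$.

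First I would dispose of the regime where $(p,q)$ stays away from $\ell_{1,2}$ in slope. If $|N(p,q)| \ge \varepsilon\, r^2$ for a fixed $\varepsilon>0$, then $|D| = \Omega(r^{-2})$ while $|E|+|R| = O(r^{-4})$, so for $r$ large all three quantities $D$, $D+E$ and $D+E+R$ carry the sign of $D$; here even a single term decides the comparison and there is nothing to prove. The substance lies near $\ell_{1,2}$, and there I would invoke \Cref{thm:closest}: among all coprime $(p,q)$, the closest approximants to the quadratic irrational slopes $x_{1,2}$ are their continued-fraction convergents, so it suffices to control these. Since $x_{1,2}$ have eventually periodic continued fractions with bounded partial quotients (explicitly $2 \le q_{i+1}/q_i \le 8$ apart from small $i$), the perpendicular distance from a convergent to its line is two-sided $\Theta(1/q)$. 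Feeding $d = \Theta(1/q)$ into the homogeneous forms yields $|D| = \Theta(q^{-4})$ and $|E| = \Theta(q^{-4})$, whereas $R = O(q^{-6})$; hence, provided $D$ and $E$ do not nearly cancel, $|D+E| = \Theta(q^{-4}) \gg |R|$ and the two-term comparison agrees with the exact one.

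The main obstacle is precisely this near-cancellation $D+E\approx 0$ at a convergent, where $D$ (whose sign alternates with the parity of the convergent, as it changes sign across $\ell_{1,2}$) and $E$ (of fixed sign along each line, being a degree $-4$ form) may partially annihilate. To handle it I would analyse the zero curve $\{D+E=0\}$: because $E$ is a degree $-4$ perturbation of the degree $-2$ form $D$, this curve is asymptotic to $\ell_{1,2}$ with a perpendicular offset $\sim \gamma/q$ for an explicit constant $\gamma$ (the ratio of the effective $\pi^4$-coefficient to the transverse slope of $D$), while the exact boundary $\{D+E+R=0\}$ lies within $O(q^{-3})$ of it. Thus the truncated and exact classifications can disagree only if some convergent falls into this $O(q^{-3})$-thin sliver, which, after multiplying through by $q$, amounts to the quantities $q_i\,(q_i x_{1,2} - p_i)$ approaching the constant $\gamma$ to within $O(q^{-2})$.

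The crux is therefore a Diophantine non-degeneracy check. By the bounds $\tfrac{1}{q_{i+2}} < |q_i x_{1,2} - p_i| < \tfrac{1}{q_{i+1}}$ together with the two-sided control on $q_{i+1}/q_i$, the values $q_i\,(q_i x_{1,2} - p_i)$ lie in a fixed interval bounded away from $0$ and $\infty$ and, by eventual periodicity of the continued fraction, converge along the period to a finite list of limiting values, computable from the explicit expansions $x_1 = \langle 0;1,1,1,7,7,2,7,7,2,\ldots\rangle$ and $x_2 = \langle 0;9,7,2,7,7,2,7,\ldots\rangle$. I would verify that none of these finitely many limiting values equals the critical constant $\gamma$; this non-coincidence supplies a uniform bound $|D+E| = \Omega(q^{-4}) \gg |R|$ at all large convergents, and combined with the easy regime completes the proof. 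I expect everything except this last verification to be routine bookkeeping with the homogeneous degrees, so the non-degeneracy of $\gamma$ against the periodic approximation spectrum of $x_{1,2}$ is the only genuinely delicate point.
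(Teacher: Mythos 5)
Your proposal follows essentially the same route as the paper: the paper gives no separate proof environment for this proposition, and its justification is precisely the preceding discussion, which introduces the lines $\ell_{1,2}$ where the leading difference vanishes, invokes \Cref{thm:closest} together with the periodic continued fraction expansions of $\frac{1}{42}\left(16\pm\sqrt{130}\right)$ (with partial quotients bounded between $2$ and $8$) to show that lattice points approach $\ell_{1,2}$ no faster than $\Theta(1/q)$, and then performs the same order-of-magnitude bookkeeping: $D\in\Theta(q^{-4})$ and $E\in\Theta(q^{-4})$ near the lines, versus an error of $O(q^{-6})$. Where you genuinely go beyond the paper is in isolating the near-cancellation $D+E\approx 0$ as the crux: the paper's text asserts that the second term ``can counteract'' $D$ but the third cannot, without ever ruling out that $D+E$ itself drops to the order of the error term, and your proposed Diophantine non-degeneracy check is exactly what is needed to close that gap (the paper concedes it provides only a sketch at this point). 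Two caveats: as written you also leave that finite verification undone, so your argument is incomplete at the same critical juncture; and the check can probably be made less delicate than you suggest, since $D=\pi^2 N(p,q)/(AB)$ with $N(p,q)=3p^2-32pq+42q^2$ a nonzero \emph{integer} at every lattice point, so a near-cancellation to order $q^{-6}$ would force this integer to approximate a critical constant which is $\pi^2$ times an algebraic number determined by the second-order terms at the slopes $x_{1,2}$; provided that algebraic factor is nonzero (a single finite computation), the constant keeps a fixed positive distance from every integer and the non-degeneracy is automatic.
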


It follows that, if an oracle tells us lens space $L(p,q)$ has as volume minimiser a Dehn 
filling of either $W$ or $P$, we have the following procedure to determine which one it 
is: for each of the finitely many filling slopes for $(p,q)$ given in (the proof of) 
\Cref{prop:lens-space-limit}, evaluate the first two terms in the Taylor series expansions
of $\Delta \vol_W (p,q)$ and $\Delta \vol_P (p,q)$ and check the inequality in 
\Cref{prop:twoterms}. This is a deterministic, exact and efficient algorithm to check if 
$P$ or $W$ provides the minimiser, provided that one of them does.

\bibliographystyle{alpha}
\bibliography{bib}

@article {Blair-altaug,
    AUTHOR = {Blair, Ryan C.},
     TITLE = {Alternating augmentations of links},
   JOURNAL = {J. Knot Theory Ramifications},
  FJOURNAL = {Journal of Knot Theory and its Ramifications},
    VOLUME = {18},
      YEAR = {2009},
     PAGES = {67--73},
      ISSN = {0218-2165,1793-6527},
   MRCLASS = {57M25 (57M50)},
  MRNUMBER = {2490017},
MRREVIEWER = {Quach thi C\^{a}m V\^{a}n},
       DOI = {10.1142/S0218216509006835},
       URL = {https://doi.org/10.1142/S0218216509006835},
}

@misc{schmalian,
	title={An algorithm for finding minimal volume hyperbolic links and the {D}ehn parental test},
	author={Misha Schmalian},
	year={2025},
	note={arXiv:2503.05671},
	eprint={2503.05671},
	archivePrefix={arXiv},
	primaryClass={math.GT}
}

@article{volt2,
	title={Computing the topological volume of some three-manifolds},
	author={Fathi Ben Aribi and Benjamin Burton and Dionne Ibarra and James Morgan and Jessica Purcell and Jose Pedro Quintanilha and Diego Santoro and Saul Schleimer and Em Thompson},
	JOURNAL = {Oberwolfach Reports},
	year={2025},
 VOLUME = {43},
PAGES = {20--24}
}

@misc{Regina,
	author = {B. Burton and R. Budney and W. Pettersson},
	title = {Regina: Software for low-dimensional topology},
	note   = {\url{http://regina-normal.github.io}}
}

@article{QA,
	author = {Kenneth L. Baker and Marc Kegel and Duncan McCoy},
	title = {Quasi-Alternating Surgeries},
	journal = {Experimental Mathematics},
	pages = {1--20},
	year = {2026},
	publisher = {Taylor \& Francis},
	doi = {10.1080/10586458.2025.2608234},
	URL = { 
	https://doi.org/10.1080/10586458.2025.2608234
	},
	eprint = { 
	https://doi.org/10.1080/10586458.2025.2608234
	}
}

@article {complexity_census0,
	AUTHOR = {Martelli, Bruno and Petronio, Carlo},
	TITLE = {Three-manifolds having complexity at most 9},
	JOURNAL = {Experiment. Math.},
	FJOURNAL = {Experimental Mathematics},
	VOLUME = {10},
	YEAR = {2001},
	NUMBER = {2},
	PAGES = {207--236},
	ISSN = {1058-6458,1944-950X},
	MRCLASS = {57N10 (57M50)},
	MRNUMBER = {1837672},
	MRREVIEWER = {Wolfgang\ H.\ Heil},
	URL = {http://projecteuclid.org/euclid.em/999188633},
}

@article {Matveev,
	AUTHOR = {Matveev, S. V.},
	TITLE = {Complexity theory of three-dimensional manifolds},
	JOURNAL = {Acta Appl. Math.},
	FJOURNAL = {Acta Applicandae Mathematicae},
	VOLUME = {19},
	YEAR = {1990},
	NUMBER = {2},
	PAGES = {101--130},
	ISSN = {0167-8019,1572-9036},
	MRCLASS = {57N10},
	MRNUMBER = {1074221},
	DOI = {10.1007/BF00049576},
	URL = {https://doi.org/10.1007/BF00049576},
}

@article {complexity_census,
	AUTHOR = {Martelli, Bruno and Petronio, Carlo},
	TITLE = {Complexity of geometric three-manifolds},
	JOURNAL = {Geom. Dedicata},
	FJOURNAL = {Geometriae Dedicata},
	VOLUME = {108},
	YEAR = {2004},
	PAGES = {15--69},
	ISSN = {0046-5755,1572-9168},
	MRCLASS = {57M27 (57M50)},
	MRNUMBER = {2112664},
	MRREVIEWER = {Wolfgang\ H.\ Heil},
	DOI = {10.1007/s10711-004-3181-x},
	URL = {https://doi.org/10.1007/s10711-004-3181-x},
}

@misc{Berge,
	title={Some knots with surgeries yielding lens spaces}, 
	author={John Berge},
	year={2018},
	note={arXiv:1802.09722}.
	eprint={1802.09722},
	archivePrefix={arXiv},
	primaryClass={math.GT},
	url={https://arxiv.org/abs/1802.09722}, 
}

@book {GS,
	AUTHOR = {Gompf, Robert E. and Stipsicz, Andr\'as I.},
	TITLE = {{$4$}-manifolds and {K}irby calculus},
	SERIES = {Graduate Studies in Mathematics},
	VOLUME = {20},
	PUBLISHER = {American Mathematical Society, Providence, RI},
	YEAR = {1999},
	PAGES = {xvi+558},
	ISBN = {0-8218-0994-6},
	MRCLASS = {57N13 (14J80 32Q55 57-02 57R17 57R57 57R65)},
	MRNUMBER = {1707327},
	MRREVIEWER = {Nikolai\ N.\ Saveliev},
	DOI = {10.1090/gsm/020},
	URL = {https://doi.org/10.1090/gsm/020},
}

@article {BAR-altvolume,
    AUTHOR = {Blair, Ryan and Allen, Heidi and Rodriguez, Leslie},
     TITLE = {Twist number and the alternating volume of knots},
   JOURNAL = {J. Knot Theory Ramifications},
  FJOURNAL = {Journal of Knot Theory and its Ramifications},
    VOLUME = {28},
      YEAR = {2019},
     PAGES = {1950016, 16},
      ISSN = {0218-2165,1793-6527},
   MRCLASS = {57M25 (57M27 57M50)},
  MRNUMBER = {3910957},
MRREVIEWER = {Mattia\ Mecchia},
       DOI = {10.1142/S0218216519500160},
       URL = {https://doi.org/10.1142/S0218216519500160},
}

@book {ContinuedFractions,
  	AUTHOR = {Niven, Ivan and Zuckerman, Herbert S. and Montgomery, Hugh L.},
  	TITLE = {An introduction to the theory of numbers},
  	EDITION = {Fifth},
  	PUBLISHER = {John Wiley \& Sons, Inc., New York},
  	YEAR = {1991},
  	PAGES = {xiv+529},
  	ISBN = {0-471-62546-9},
  	MRCLASS = {11-01},
  	MRNUMBER = {1083765},
  }

@article {torus_bundles,
  	AUTHOR = {Baker, Kenneth L. and Johnson, Jesse E. and Klodginski,
  	Elizabeth A.},
  	TITLE = {Tunnel number one, genus-one fibered knots},
  	JOURNAL = {Comm. Anal. Geom.},
  	FJOURNAL = {Communications in Analysis and Geometry},
  	VOLUME = {17},
  	YEAR = {2009},
  	PAGES = {1--16},
  	ISSN = {1019-8385,1944-9992},
  	MRCLASS = {57M25},
  	MRNUMBER = {2495831},
  	DOI = {10.4310/CAG.2009.v17.n1.a1},
  	URL = {https://doi.org/10.4310/CAG.2009.v17.n1.a1},
  }

@article {Neumann-volumes-1985,
    AUTHOR = {Neumann, Walter D. and Zagier, Don},
     TITLE = {Volumes of hyperbolic three-manifolds},
   JOURNAL = {Topology},
  FJOURNAL = {Topology. An International Journal of Mathematics},
    VOLUME = {24},
      YEAR = {1985},
     PAGES = {307--332},
      ISSN = {0040-9383},
   MRCLASS = {57N10 (22E40 53C20)},
  MRNUMBER = {815482},
MRREVIEWER = {Troels J\o rgensen},
       DOI = {10.1016/0040-9383(85)90004-7},
       URL = {https://doi.org/10.1016/0040-9383(85)90004-7},
}

@article {Remigio-link-2012,
    AUTHOR = {Remigio-Ju\'{a}rez, Jair and Rieck, Yo'av},
     TITLE = {The link volumes of some prism manifolds},
   JOURNAL = {Algebr. Geom. Topol.},
  FJOURNAL = {Algebraic \& Geometric Topology},
    VOLUME = {12},
      YEAR = {2012},
     PAGES = {1649--1665},
      ISSN = {1472-2747},
   MRCLASS = {57M27},
  MRNUMBER = {2966698},
MRREVIEWER = {Ilesanmi Adeboye},
       DOI = {10.2140/agt.2012.12.1649},
       URL = {https://doi.org/10.2140/agt.2012.12.1649},
}

@article {Rieck-link-2013,
    AUTHOR = {Rieck, Yo'av and Yamashita, Yasushi},
     TITLE = {The link volume of 3-manifolds},
   JOURNAL = {Algebr. Geom. Topol.},
  FJOURNAL = {Algebraic \& Geometric Topology},
    VOLUME = {13},
      YEAR = {2013},
     PAGES = {927--958},
      ISSN = {1472-2747},
   MRCLASS = {57M12 (57M27 57M50)},
  MRNUMBER = {3044597},
       DOI = {10.2140/agt.2013.13.927},
       URL = {https://doi.org/10.2140/agt.2013.13.927},
}

@article {Menasco84,
	AUTHOR = {Menasco, W.},
	TITLE = {Closed incompressible surfaces in alternating knot and link
	complements},
	JOURNAL = {Topology},
	FJOURNAL = {Topology. An International Journal of Mathematics},
	VOLUME = {23},
	YEAR = {1984},
	PAGES = {37--44},
	ISSN = {0040-9383},
	MRCLASS = {57M25},
	MRNUMBER = {721450},
	MRREVIEWER = {Cameron\ McA.\ Gordon},
	DOI = {10.1016/0040-9383(84)90023-5},
	URL = {https://doi.org/10.1016/0040-9383(84)90023-5},
}

@manual{sage,
	Key          = {SageMath},
	Author       = {{Sage~Developers}},
	Title        = {{S}age{M}ath, the {S}age {M}athematics {S}oftware {S}ystem ({V}ersion 9.1)},
	note         = {\url{https://www.sagemath.org}},
	Year         = {2023},
}

@Misc{KLO,
	author    = {Swenton, Frank},
	title     = {{KLO} ({K}not-{L}ike {O}bjects)},
	owner     = {kmill},
	note       = {Available at \url{http://KLO-Software.net}},
}

@article {HLM2,
    AUTHOR = {Hilden, Hugh M. and Lozano, Mar\'{\i}a Teresa and Montesinos, Jos\'{e}
              Mar\'{\i}a},
     TITLE = {The {W}hitehead link, the {B}orromean rings and the knot
              {$9_{46}$} are universal},
   JOURNAL = {Collect. Math.},
  FJOURNAL = {Seminario Matem\'{a}tico de Barcelona. Universidad de Barcelona.
              Collectanea Mathematica},
    VOLUME = {34},
      YEAR = {1983},
     PAGES = {19--28},
      ISSN = {0010-0757},
   MRCLASS = {57M12 (57N10)},
  MRNUMBER = {747855},
MRREVIEWER = {G. Burde},
}

@article {HLM3,
    AUTHOR = {Hilden, Hugh M. and Lozano, Mar\'{\i}a Teresa and Montesinos, Jos\'{e}
              Mar\'{\i}a},
     TITLE = {On knots that are universal},
   JOURNAL = {Topology},
  FJOURNAL = {Topology. An International Journal of Mathematics},
    VOLUME = {24},
      YEAR = {1985},
     PAGES = {499--504},
      ISSN = {0040-9383},
   MRCLASS = {57M25 (57M12 57N10)},
  MRNUMBER = {816529},
MRREVIEWER = {Wolfgang H. Heil},
       DOI = {10.1016/0040-9383(85)90019-9},
       URL = {https://doi.org/10.1016/0040-9383(85)90019-9},
}

@incollection {HLM4,
    AUTHOR = {Hilden, Hugh M. and Lozano, Mar\'{\i}a Teresa and Montesino, Jos\'{e}
              Mar\'{\i}a},
     TITLE = {Universal knots},
 BOOKTITLE = {Knot theory and manifolds ({V}ancouver, {B}.{C}., 1983)},
    SERIES = {Lecture Notes in Math.},
    VOLUME = {1144},
     PAGES = {25--59},
 PUBLISHER = {Springer, Berlin},
      YEAR = {1985},
   MRCLASS = {57M25},
  MRNUMBER = {823281},
MRREVIEWER = {Wolfgang H. Heil},
       DOI = {10.1007/BFb0075011},
       URL = {https://doi.org/10.1007/BFb0075011},
}

@article {Kuperberg,
	AUTHOR = {Kuperberg, Greg},
	TITLE = {Algorithmic homeomorphism of 3-manifolds as a corollary of
	geometrization},
	JOURNAL = {Pacific J. Math.},
	FJOURNAL = {Pacific Journal of Mathematics},
	VOLUME = {301},
	YEAR = {2019},
	PAGES = {189--241},
	ISSN = {0030-8730,1945-5844},
	MRCLASS = {57M50 (57K30 68Q15)},
	MRNUMBER = {4007377},
	MRREVIEWER = {Jessica\ S.\ Purcell},
	DOI = {10.2140/pjm.2019.301.189},
	URL = {https://doi.org/10.2140/pjm.2019.301.189},
}

@book {Thurston_book,
	AUTHOR = {Thurston, William P.},
	TITLE = {Three-dimensional geometry and topology. {V}ol. 1},
	SERIES = {Princeton Mathematical Series},
	VOLUME = {35},
	EDITOR = {Levy, Silvio},
	PUBLISHER = {Princeton University Press, Princeton, NJ},
	YEAR = {1997},
	PAGES = {x+311},
	ISBN = {0-691-08304-5},
	MRCLASS = {57M50 (53A35 57M25 57M60 57N10)},
	MRNUMBER = {1435975},
	MRREVIEWER = {Athanase\ Papadopoulos},
}

@article{thurston-notes,
  title={The geometry and topology of 3-manifolds},
  author={Thurston, William},
  journal={Lecture notes from Princeton University},
  year={1978--80},
  publisher={Princeton Univ.}
}

@article {My93,
    AUTHOR = {Myers, Robert},
     TITLE = {Excellent {$1$}-manifolds in compact {$3$}-manifolds},
   JOURNAL = {Topology Appl.},
  FJOURNAL = {Topology and its Applications},
    VOLUME = {49},
      YEAR = {1993},
     PAGES = {115--127},
      ISSN = {0166-8641},
   MRCLASS = {57N10 (57M25 57M50 57N70)},
  MRNUMBER = {1206219},
MRREVIEWER = {Patrick Gilmer},
       DOI = {10.1016/0166-8641(93)90038-F},
       URL = {https://doi.org/10.1016/0166-8641(93)90038-F},
}

@article {My82,
    AUTHOR = {Myers, Robert},
     TITLE = {Simple knots in compact, orientable {$3$}-manifolds},
   JOURNAL = {Trans. Amer. Math. Soc.},
  FJOURNAL = {Transactions of the American Mathematical Society},
    VOLUME = {273},
      YEAR = {1982},
     PAGES = {75--91},
      ISSN = {0002-9947},
   MRCLASS = {57N10 (57M25 57M40)},
  MRNUMBER = {664030},
MRREVIEWER = {John Hempel},
       DOI = {10.2307/1999193},
       URL = {https://doi.org/10.2307/1999193},
}

@book{BBBMP,
    AUTHOR = {Bessi\`{e}res, Laurent and Besson, G\'{e}rard and Boileau, Michel and Maillot, Sylvain and Porti, Joan},
     TITLE = {Geometrisation of 3-manifolds},
    SERIES = {EMS Tracts in Mathematics},
    VOLUME = {13},
 PUBLISHER = {European Mathematical Society (EMS), Z\"{u}rich},
      YEAR = {2010},
     PAGES = {x+237},
      ISBN = {978-3-03719-082-1},
  MRCLASS = {57M50 (53C44)},
  MRNUMBER = {2683385},
MRREVIEWER = {Yu Ding},
      DOI = {10.4171/082},
      URL = {https://doi.org/10.4171/082},
}

@incollection {Du20,
    AUTHOR = {Dunfield, Nathan M.},
     TITLE = {A census of exceptional {D}ehn fillings},
 BOOKTITLE = {Characters in low-dimensional topology},
    SERIES = {Contemp. Math.},
    VOLUME = {760},
     PAGES = {143--155},
 PUBLISHER = {Amer. Math. Soc., [Providence], RI},
      YEAR = {2020},
   MRCLASS = {57K32 (57K35)},
  MRNUMBER = {4193924},
MRREVIEWER = {Scott A. Taylor},
       DOI = {10.1090/conm/760/15289},
       URL = {https://doi.org/10.1090/conm/760/15289},
}

@misc{snappy,
     author={Culler, Marc and Dunfield, Nathan M. and Goerner,
     Matthias and Weeks, Jeffrey R.},
     title={Snap{P}y, a computer program for studying the geometry and topology of $3$-manifolds},
     note={Available at \url{https://snappy.math.uic.edu}},
     year = {2021}
}

@incollection {handbookknottheory,
	AUTHOR = {Weeks, Jeff},
	TITLE = {Computation of hyperbolic structures in knot theory},
	BOOKTITLE = {Handbook of knot theory},
	PAGES = {461--480},
	PUBLISHER = {Elsevier B. V., Amsterdam},
	YEAR = {2005},
	ISBN = {0-444-51452-X},
	MRCLASS = {57M25 (57M50)},
	MRNUMBER = {2179268},
	MRREVIEWER = {Mattia\ Mecchia},
	DOI = {10.1016/B978-044451452-3/50011-3},
	URL = {https://doi.org/10.1016/B978-044451452-3/50011-3},
}

@article {Mi09,
    AUTHOR = {Milley, Peter},
     TITLE = {Minimum volume hyperbolic 3-manifolds},
   JOURNAL = {J. Topol.},
  FJOURNAL = {Journal of Topology},
    VOLUME = {2},
      YEAR = {2009},
     PAGES = {181--192},
      ISSN = {1753-8416},
   MRCLASS = {57M50},
  MRNUMBER = {2499442},
MRREVIEWER = {Colin C. Adams},
       DOI = {10.1112/jtopol/jtp006},
       URL = {https://doi.org/10.1112/jtopol/jtp006},
}

@article {agol-2cusped,
    AUTHOR = {Agol, Ian},
     TITLE = {The minimal volume orientable hyperbolic 2-cusped 3-manifolds},
   JOURNAL = {Proc. Amer. Math. Soc.},
  FJOURNAL = {Proceedings of the American Mathematical Society},
    VOLUME = {138},
      YEAR = {2010},
     PAGES = {3723--3732},
      ISSN = {0002-9939},
   MRCLASS = {57M50},
  MRNUMBER = {2661571},
       DOI = {10.1090/S0002-9939-10-10364-5},
       URL = {https://doi.org/10.1090/S0002-9939-10-10364-5},
}

@incollection {Adams_Augmented,
    AUTHOR = {Adams, Colin C.},
     TITLE = {Augmented alternating link complements are hyperbolic},
 BOOKTITLE = {Low-dimensional topology and {K}leinian groups
              ({C}oventry/{D}urham, 1984)},
    SERIES = {London Math. Soc. Lecture Note Ser.},
    VOLUME = {112},
     PAGES = {115--130},
 PUBLISHER = {Cambridge Univ. Press, Cambridge},
      YEAR = {1986},
   MRCLASS = {57M25 (57N10)},
  MRNUMBER = {903861},
MRREVIEWER = {W. Heil},
}

@article {Adams_Bound,
    AUTHOR = {Adams, Colin},
     TITLE = {Bipyramids and bounds on volumes of hyperbolic links},
   JOURNAL = {Topology Appl.},
  FJOURNAL = {Topology and its Applications},
    VOLUME = {222},
      YEAR = {2017},
     PAGES = {100--114},
      ISSN = {0166-8641},
   MRCLASS = {57M50 (57M25 57M27)},
  MRNUMBER = {3630197},
MRREVIEWER = {Sadayoshi Kojima},
       DOI = {10.1016/j.topol.2017.03.002},
       URL = {https://doi.org/10.1016/j.topol.2017.03.002},
}

@article {Adams_unbounded_volume,
    AUTHOR = {Adams, Colin},
     TITLE = {Dehn filling hyperbolic {$3$}-manifolds},
   JOURNAL = {Pacific J. Math.},
  FJOURNAL = {Pacific Journal of Mathematics},
    VOLUME = {165},
      YEAR = {1994},
     PAGES = {217--238},
      ISSN = {0030-8730},
   MRCLASS = {57N10 (57M25 57M50)},
  MRNUMBER = {1300832},
MRREVIEWER = {Ken-ichi Ohshika},
       URL = {http://projecteuclid.org/euclid.pjm/1102621615},
}

@article {Baker_Berge_knots,
    AUTHOR = {Baker, Kenneth L.},
     TITLE = {Surgery descriptions and volumes of {B}erge knots. {I}.
              {L}arge volume {B}erge knots},
   JOURNAL = {J. Knot Theory Ramifications},
  FJOURNAL = {Journal of Knot Theory and its Ramifications},
    VOLUME = {17},
      YEAR = {2008},
     PAGES = {1077--1097},
      ISSN = {0218-2165},
   MRCLASS = {57M50 (57M25)},
  MRNUMBER = {2457837},
MRREVIEWER = {Mattia Mecchia},
       DOI = {10.1142/S0218216508006518},
       URL = {https://doi.org/10.1142/S0218216508006518},
}

@article{Guzman_Shalen,
  doi = {10.1142/S1793525323500176},
  author = {Guzman, Rosemary K. and Shalen, Peter B.},
  keywords = {Geometric Topology (math.GT), FOS: Mathematics, FOS: Mathematics, 57K32},
  title = {The ratio of homology rank to hyperbolic volume, {I}},
  journal = {J. Topol. Anal.},
  year = {2025},
  volume = {17},
  pages = {1--34}
}

@article {Culler_Shalen2011,
    AUTHOR = {Culler, Marc and Shalen, Peter B.},
     TITLE = {Singular surfaces, mod 2 homology, and hyperbolic volume,
              {II}},
   JOURNAL = {Topology Appl.},
  FJOURNAL = {Topology and its Applications},
    VOLUME = {158},
      YEAR = {2011},
     PAGES = {118--131},
      ISSN = {0166-8641},
   MRCLASS = {57M50},
  MRNUMBER = {2734702},
MRREVIEWER = {Mario Eudave Mu\~{n}oz},
       DOI = {10.1016/j.topol.2010.10.008},
       URL = {https://doi.org/10.1016/j.topol.2010.10.008},
}

@article {Culler_Shalen2012,
    AUTHOR = {Culler, Marc and Shalen, Peter B.},
     TITLE = {4-free groups and hyperbolic geometry},
   JOURNAL = {J. Topol.},
  FJOURNAL = {Journal of Topology},
    VOLUME = {5},
      YEAR = {2012},
     PAGES = {81--136},
      ISSN = {1753-8416},
   MRCLASS = {57M50},
  MRNUMBER = {2897050},
MRREVIEWER = {Colin C. Adams},
       DOI = {10.1112/jtopol/jtr028},
       URL = {https://doi.org/10.1112/jtopol/jtr028},
}

@article{Guzman_Shalen2020,
  doi = {10.4310/CAG.241015011447},
  author = {Guzman, Rosemary K. and Shalen, Peter B.},
  keywords = {Geometric Topology (math.GT), FOS: Mathematics, FOS: Mathematics, 57K32},
  title = {Hyperbolic volume, mod 2 homology, and k-freeness},
  year = {2024},
  volume = {32},
  pages = {401--506},
  journal = {Commun. Anal. Geom.}
}

@incollection {Hodgson_Masai,
    AUTHOR = {Hodgson, Craig and Masai, Hidetoshi},
     TITLE = {On the number of hyperbolic 3-manifolds of a given volume},
 BOOKTITLE = {Geometry and topology down under},
    SERIES = {Contemp. Math.},
    VOLUME = {597},
     PAGES = {295--320},
 PUBLISHER = {Amer. Math. Soc., Providence, RI},
      YEAR = {2013},
   MRCLASS = {57M27 (11E16 57M50 57N10 58C35)},
  MRNUMBER = {3186679},
MRREVIEWER = {Matilde N. Lal\'{\i}n},
       DOI = {10.1090/conm/597/11767},
       URL = {https://doi.org/10.1090/conm/597/11767},
}

@misc{GHMTY21,
  doi = {10.48550/ARXIV.2109.14570},
  url = {https://arxiv.org/abs/2109.14570},
  author = {Gabai, David and Haraway, Robert and Meyerhoff, Robert and Thurston, Nathaniel and Yarmola, Andrew},
  keywords = {Geometric Topology (math.GT), FOS: Mathematics, FOS: Mathematics, 57M50, 51M10, 51M25},
  title = {Hyperbolic 3-manifolds of low cusp volume},
  publisher = {arXiv:2109.14570},
  year = {2021},
  copyright = {Creative Commons Attribution 4.0 International}
}

@incollection {Gro81,
	AUTHOR = {Gromov, Michael},
	TITLE = {Hyperbolic manifolds (according to {T}hurston and {J}\o
	rgensen)},
	BOOKTITLE = {Bourbaki {S}eminar, {V}ol. 1979/80},
	SERIES = {Lecture Notes in Math.},
	VOLUME = {842},
	PAGES = {40--53},
	PUBLISHER = {Springer, Berlin},
	YEAR = {1981},
	ISBN = {3-540-10292-2},
	MRCLASS = {53C30 (57R20 58F15)},
	MRNUMBER = {636516},
}

@incollection {Mai10,
	AUTHOR = {Maillot, Sylvain},
	TITLE = {Vari\'{e}t\'{e}s hyperboliques de petit volume (d'apr\`es {D}.
	{G}abai, {R}. {M}eyerhoff, {P}. {M}illey, {$\dots$})},
	NOTE = {S\'{e}minaire Bourbaki. Volume 2008/2009. Expos\'{e}s
	997--1011},
	JOURNAL = {Ast\'{e}risque},
	FJOURNAL = {Ast\'{e}risque},
	NUMBER = {332},
	YEAR = {2010},
	PAGES = {Exp. No. 1011, x, 405--417},
	ISSN = {0303-1179,2492-5926},
	ISBN = {978-2-85629-291-4},
	MRCLASS = {57M50 (57M27)},
	MRNUMBER = {2648686},
	MRREVIEWER = {Luisa\ Paoluzzi},
}

@article {cao-meyerhoff,
	AUTHOR = {Cao, Chun and Meyerhoff, G. Robert},
	TITLE = {The orientable cusped hyperbolic {$3$}-manifolds of minimum
	volume},
	JOURNAL = {Invent. Math.},
	FJOURNAL = {Inventiones Mathematicae},
	VOLUME = {146},
	YEAR = {2001},
	PAGES = {451--478},
	ISSN = {0020-9910,1432-1297},
	MRCLASS = {57M50 (57N10)},
	MRNUMBER = {1869847},
	MRREVIEWER = {Colin\ C.\ Adams},
	DOI = {10.1007/s002220100167},
	URL = {https://doi.org/10.1007/s002220100167},
}

@article {agol-bounds,
	AUTHOR = {Agol, Ian},
	TITLE = {Bounds on exceptional {D}ehn filling},
	JOURNAL = {Geom. Topol.},
	FJOURNAL = {Geometry and Topology},
	VOLUME = {4},
	YEAR = {2000},
	PAGES = {431--449},
	ISSN = {1465-3060,1364-0380},
	MRCLASS = {57M50 (57M25 57M27 57S25)},
	MRNUMBER = {1799796},
	MRREVIEWER = {Danny\ C.\ Calegari},
	DOI = {10.2140/gt.2000.4.431},
	URL = {https://doi.org/10.2140/gt.2000.4.431},
}

@article {HTWKnots,
	AUTHOR = {Hoste, Jim and Thistlethwaite, Morwen and Weeks, Jeff},
	TITLE = {The first 1,701,936 knots},
	JOURNAL = {Math. Intelligencer},
	FJOURNAL = {The Mathematical Intelligencer},
	VOLUME = {20},
	YEAR = {1998},
	PAGES = {33--48},
	ISSN = {0343-6993,1866-7414},
	MRCLASS = {57M25},
	MRNUMBER = {1646740},
	MRREVIEWER = {C.\ Kearton},
	DOI = {10.1007/BF03025227},
	URL = {https://doi.org/10.1007/BF03025227},
}

@article {lackenby-6,
	AUTHOR = {Lackenby, Marc},
	TITLE = {Word hyperbolic {D}ehn surgery},
	JOURNAL = {Invent. Math.},
	FJOURNAL = {Inventiones Mathematicae},
	VOLUME = {140},
	YEAR = {2000},
	PAGES = {243--282},
	ISSN = {0020-9910,1432-1297},
	MRCLASS = {57M07 (20F65 20F67 57M05 57N10)},
	MRNUMBER = {1756996},
	MRREVIEWER = {William\ H.\ Jaco},
	DOI = {10.1007/s002220000047},
	URL = {https://doi.org/10.1007/s002220000047},
}

@misc{data,
    Author = {Kegel, Marc and Ray, Arunima and Spreer, Jonathan and Thompson, Emily and Tillmann, Stephan},
    Title = {Code and data to accompany this paper},
    note = {Available at, \url{https://marckegel.github.io/volt/volt.html}},
    year = {2023}
}

@article {FuterKalfagianniPurcell,
	AUTHOR = {Futer, David and Kalfagianni, Efstratia and Purcell, Jessica
	S.},
	TITLE = {Dehn filling, volume, and the {J}ones polynomial},
	JOURNAL = {J. Differential Geom.},
	FJOURNAL = {Journal of Differential Geometry},
	VOLUME = {78},
	YEAR = {2008},
	PAGES = {429--464},
	ISSN = {0022-040X,1945-743X},
	MRCLASS = {57M25 (57M50 57N10)},
	MRNUMBER = {2396249},
	MRREVIEWER = {Colin\ C.\ Adams},
	URL = {http://projecteuclid.org/euclid.jdg/1207834551},
}

\address{Marc Kegel\\Universidad de Sevilla, Dpto.\ de Algebra,
	Avda.\ Reina Mercedes s/n,
	41012 Sevilla, Spain\\
	{kegelmarc87@gmail.com\\-----}}

\address{Arunima Ray\\School of Mathematics and Statistics, The University of Melbourne, VIC 3010, Australia\\{aru.ray@unimelb.edu.au\\-----}}

\address{Jonathan Spreer\\School of Mathematics and Statistics F07, The University of Sydney, NSW 2006 Australia\\{jonathan.spreer@sydney.edu.au\\-----}}

\address{Em Thompson\\School of Mathematics, Monash University, Vic 3800 Australia\\{em.thompson.maths@gmail.com\\-----}}

\address{Stephan Tillmann\\School of Mathematics and Statistics F07, The University of Sydney, NSW 2006 Australia\\{stephan.tillmann@sydney.edu.au}}

\Addresses

\newpage

\appendix
\section{Using tetrahedron shapes to prove equivalence of volumes}\label{app:shapes}

In this appendix we verify our assertions regarding the equality of volumes of certain 3--manifolds, as claimed in the proof of \cref{prop:census}. 
SnapPy \cite{snappy} gives gluing information for triangulations of each 
manifold that determine the unique shapes of tetrahedra corresponding to the complete hyperbolic structure. In each of the two cases below, we show that the manifolds hypothesised to have the 
same volume are built from geometrically equivalent tetrahedra, therefore 
proving the claimed equality of volumes.

\subsection{Equivalence of volumes: m006 and m007} 

SnapPy gives the following output for

 \texttt{A=Manifold(`m006')}
and \texttt{B=Manifold(`m007')}:

\begin{verbatim}

In: A.gluing_equations(form=`rect')
Out: 
[([-1, 1, 1], [2, -2, -2], 1),
 ([0, -1, -1], [-2, 1, 1], 1),
 ([1, 0, 0], [0, 1, 1], 1),
 ([-1, 0, 0], [0, -2, 0], 1),
 ([0, 1, -1], [0, 0, 0], 1)]

In: B.gluing_equations(form=`rect')
Out: 
[([1, 2, 1], [0, 0, 0], 1),
 ([-1, -2, -1], [-1, 1, -1], 1),
 ([0, 0, 0], [1, -1, 1], 1),
 ([0, -1, -1], [-1, 0, 1], -1),
 ([1, 1, 0], [1, 0, -1], -1)]
 
\end{verbatim}

Note that $(\left[a,b,c\right],\left[d,e,f\right],g)$ translates to the equation 
\[ z_1^a (1-z_1)^d z_2^b (1-z_2)^e z_3^c (1-z_3)^f = g.\]

For clarity, we use $z$ variables for m006 and $w$ variables for m007.

Using \textit{Sage}~\cite{sage}, we find that both systems of equations have multiple solutions, but only one solution consisting of complex numbers with positive imaginary part, which is the one we are interested in. The approximate solutions are $z_1=0.7733+1.4677i, z_2=z_3=0.3352+0.4011i$, and $w_1=w_3=-0.1027+0.6654i, w_2=0.2266+1.4677i$. As such, we expect the exact values to satisfy $w_2=z_1-1, w_1=z_2/(z_2-1)$ and $w_3=z_3/(z_3-1)$. To confirm this, we compute a Groebner basis from the union of the two sets of gluing equations along with the conjectured relations between $w$ and $z$ variables. The output is
\begin{align*}
    \{z_1^3-&3 z_1^2+5 z_1-4,\\
    -&z_1^2+z_1+2 z_2-3,\\
    -&z_1^2+z_1+2 z_3-3,\\
    &w_3-z_1^2+2 z_1-3,\\
    &w_2+z_1-1,\\
    &w_1-z_1^2+2 z_1-3\}
\end{align*}

Since the Gr\"obner basis does not contain $1$, the corresponding ideal is proper and hence the system of equations admits a solution. In particular, the first polynomial in the Gr\"obner basis has $z_1\approx 0.7733+1.4677i$ as a root, and the remaining polynomials allow us to express the other variables in terms of $z_1$.

This confirms that the geometry of the tetrahedra used to triangulate the manifolds $m006$ and $m007$ are identical, and the manifolds do indeed have the same hyperbolic volume.

\subsection{Equivalence of volumes: m015, m016 and m017} 

SnapPy gives the following output for 

\texttt{C=Manifold(`m015')}, \texttt{D=Manifold(`m016')} and \texttt{E=Manifold(`m017')}:

\begin{verbatim}

In: C.gluing_equations(form=`rect')
Out: 
[([1, 1, 1], [-1, 1, -1], -1),
 ([0, 0, 0], [1, -2, 1], 1),
 ([-1, -1, -1], [0, 1, 0], -1),
 ([0, 0, 0], [-1, 1, 0], 1),
 ([3, 1, -1], [0, -1, 0], -1)]

In: D.gluing_equations(form=`rect')
Out: 
[([1, 0, 1], [-1, 1, 0], -1),
 ([-1, -1, 0], [2, -1, -2], -1),
 ([0, 1, -1], [-1, 0, 2], 1),
 ([-2, 0, -1], [2, 0, 0], 1),
 ([0, 0, 0], [-2, 2, 2], 1)]

In: E.gluing_equations(form=`rect')
Out: 
[([1, 0, -2], [-1, 1, 1], -1),
 ([0, -1, 2], [1, 0, 0], 1),
 ([-1, 1, 0], [0, -1, -1], -1),
 ([-1, 0, -2], [-1, 1, 0], -1),
 ([0, -1, 0], [-1, 0, 0], 1)]
 
\end{verbatim}

Using $u,v,w$ as variables for each system, respectively, we use \textit{Sage} to find the unique solution consisting of complex numbers with positive imaginary part for each system. In approximate values, these solutions are

$u_1=u_2=u_3=0.662359 +0.56228 i$

$v_1=0.78492 +1.30714 i,v_2=v_3=0.122561 +0.744862 i$

$w_1=0.662359 +0.56228 i,w_2=w_3=0.78492 +1.30714 i$

We therefore expect to find that the exact values for all of $u_1, u_2, u_3$ and $w_1$ are identical. Additionally, we expect $v_1=w_2=w_3=\frac{1}{1-u_1}$ and $v_2=v_3=\frac{u_1-1}{u_1}$.

Again, we compute a Groebner basis for all three sets of gluing equations along with the hypothesised relations between shapes. The output is the following: 
\begin{align*}
    \{1 - &z1 + z1^3,\\
    -&z1 + z2, -z1 + z3,\\
    &w3 - z1^2,\\
    &w2 - z1^2,\\
    &w1 - z1 - z1^2, \\
    &v3 - z1 - z1^2, \\
    &v2 - z1 - z1^2,\\
    &v1 - z1\}
\end{align*}
This is confirmation that the geometry of the tetrahedra used to triangulate the manifolds $m015$, $m016$ and $m017$ are identical, and the manifolds do indeed have the same hyperbolic volume.

\end{document}